\documentclass[reqno]{amsart}

\usepackage{amssymb,amsmath}
\usepackage{euscript}
\usepackage{graphicx}
\usepackage{tikz}
\usepackage{tikz-cd}
\usepackage{hyperref}
\usepackage{mathrsfs}
\usepackage{dsfont}
\usepackage{mathtools}
\usepackage{microtype}

\newtheorem{thm}{Theorem}[section]
\newtheorem{cor}[thm]{Corollary}
\newtheorem{lem}[thm]{Lemma}
\newtheorem{prop}[thm]{Proposition}
\newtheorem{defin}[thm]{Definition}
\newtheorem{def-lem}[thm]{Definition-Lemma}
\newtheorem{conj}[thm]{Conjecture}

\theoremstyle{remark}

\newtheorem{rem}[thm]{Remark}

\newtheorem{example}[thm]{Example}

\newtheorem{ques}{Question}

\newtheorem{notation}[thm]{Notation}

\numberwithin{equation}{section}

\newcommand{\bbC}{\mathbb{C}}

\newcommand{\bbF}{\mathbb{F}}
\newcommand{\bbI}{\mathbb{I}}

\newcommand{\bbM}{\mathbb{M}}

\newcommand{\bbR}{\mathbb{R}}

\newcommand{\bbS}{\mathbb{S}}
\newcommand{\bbU}{\mathbb{U}}
\newcommand{\bbX}{\mathbb{X}}

\newcommand{\bbZ}{\mathbb{Z}}

\newcommand{\scrF}{\mathscr{F}}
\newcommand{\scrM}{\mathscr{M}}

\newcommand{\scrU}{\mathscr{U}}

\newcommand{\calA}{\mathcal{A}}

\newcommand{\calL}{\mathcal{L}}

\newcommand{\calP}{\mathcal{P}}

\newcommand{\calV}{\mathcal{V}}

\newcommand{\frakc}{\mathfrak{c}}
\newcommand{\frakF}{\mathfrak{F}}

\newcommand{\frakM}{\mathfrak{M}}

\newcommand{\frako}{\mathfrak{o}}
\newcommand{\frakR}{\mathfrak{R}}

\newcommand{\frakV}{\mathfrak{V}}
\newcommand{\frakX}{\mathfrak{X}}

\DeclareMathOperator{\codim}{codim}

\DeclareMathOperator{\crit}{Crit}
\DeclareMathOperator{\grad}{grad}

\DeclareMathOperator{\ind}{ind}

\newcommand{\eval}{\mathrm{eval}}
\newcommand{\flow}{\mathrm{Flow}}
\newcommand{\fr}{\mathrm{fr}}

\newcommand{\spectra}{\mathrm{Sp}}

\newcommand{\std}{\mathrm{std}}

\newcommand{\unit}{\mathds{1}}

\newcommand{\abs}[1]{\lvert#1\rvert}

\title[Spectral Viterbo isomorphism: complex-oriented versus framed]{Spectral Viterbo isomorphism: \\ complex-oriented versus framed}
\author{Kenneth Blakey}
\address{Department of Mathematics, MIT, 182 Memorial Drive, Cambridge, MA 02139, U.S.A.} \email{kblakey@mit.edu}

\begin{document}

\begin{abstract}
The Viterbo isomorphism relates the symplectic cohomology of a cotangent bundle to the homology of the free loop space of its base. We lift this to a relation of modules over (1) the complex bordism spectrum MU and (2) the sphere spectrum $\bbS$. In particular, by a result of Porcelli and the present author \cite{BP26}, it is not the case that, in general, the $\bbS$-level statement recovers the MU-level statement after base-change -- even in the case the base is spin.
\end{abstract}

\maketitle
\tableofcontents

\section{Introduction}

\subsection{Background}
Let $Q$ be a closed smooth manifold and $\calL Q$ its free loop space. Classically, the Viterbo isomorphism relates the symplectic cohomology of $T^*Q$ to the homology of $\calL Q$: 
    \begin{equation}\label{eqn:viterbocohomology}
    SH^{-*}(T^*Q;\bbZ)\cong H_*(\calL Q;\eta)\;\;{\rm or}\;\;SH^{-*}(T^*Q;\eta^{-1})\cong H_*(\calL Q;\bbZ),
    \end{equation}
where $\eta$ is a local system on $\calL Q$ which is trivial when $Q$ is spin, see \eqref{eqn:localsystem}. Some history is as follows. The relation \eqref{eqn:viterbocohomology} was proven, with $\bbZ/2$-coefficients on both sides, in \cite{AS06,SW06,Vit98}, and was expected to also hold with $\bbZ$-coefficients (perhaps after twisting either side by the orientation local system in the non-oriented case). The fact that the latter statement is not true, in the case that $Q$ is non-spin, was first noticed by Kragh \cite{Kra18} and verified by Seidel \cite{Sei10} in an unpublished note computing the case of $T^*\bbC P^2$. Eventually, a complete proof of \eqref{eqn:viterbocohomology} appeared in Abouzaid's monograph \cite{Abo14}.

Traditionally, symplectic cohomology is constructed via a cochain complex over a classical ring, e.g. $\bbZ$ or $\bbC$. Recently, there has been interest in constructing Floer-theoretic invariants over more exotic ring spectra, and this is the subject of Floer homotopy theory. The most general version of spectral symplectic cohomology which canonically exists on a graded\footnote{I.e., satisfying that the first Chern class is 2-torsion. In particular, $T^*Q$ is graded.} Liouville manifold is a module $\frakF^{\rm MU}$ over the complex bordism spectrum MU; this is because the moduli spaces of Floer trajectories can be equipped with canonical stable complex structures. Meanwhile, when a Liouville manifold admits a stable $\bbR$-polarization $\Lambda$, it admits a theory of spectral symplectic cohomology as a module $\frakF^{\bbS,\Lambda}$ over the sphere spectrum $\bbS$ since the Floer moduli spaces can be equipped with stable framings; an example of such a Liouville manifold is $T^*Q$ with its ``standard'' stable $\bbR$-polarization: 
    \begin{equation}
    T(T^*Q)\cong\Lambda_\std\otimes_\bbR\underline{\bbC},\;\;\Lambda_\std\equiv\pi^*TQ,\;\;\pi:T^*Q\to Q.
    \end{equation}
Here, an underline denotes a trivial vector bundle with the notated fiber, e.g. $\underline{\bbC}\equiv T^*Q\times\bbC\to T^*Q$. We would like to emphasize that $\frakF^{\bbS,\Lambda}$ really does depend on $\Lambda$, cf. for example \cite{BB25}. It is natural to wonder whether the base-change of an $\bbS$-Floer homotopy type recovers the canonical MU-Floer homotopy type. An obstruction to the canonical MU-Floer homotopy type being the base-change of \emph{any} $\bbS$-module was constructed in work of Porcelli and the present author \cite{BP26}; moreover, it was computed that the canonical MU-Floer homotopy type of $T^*\bbC P^n$, $n\in\bbZ$ at least 3 and odd, cannot possibly be base-changed from an $\bbS$-module.

Circling back to Viterbo's isomorphism, various special cases of the $\bbS$-Viterbo isomorphism exist in the literature: cf. \cite{Coh10} for when the base is spin, cf. \cite{CK23} for the $S^1$-equivariant case when the base is stably framed, and cf. \cite{ADP24} for the open-string analogue. The general (i.e., non-oriented/non-spin) $\bbS$-Viterbo isomorphism was proven in an unpublished note written by the present author. However, the computation in \cite{BP26} begs the question ``How does the canonical MU-Floer homotopy type of $T^*\bbC P^n$, $n\in\bbZ$ at least 3 and odd, relate to $\Sigma^\infty_+\calL\bbC P^n$?'', i.e.,  ``What is the statement of the MU-Viterbo isomorphism?'' The purpose of the present article is to provide such an MU-level statement and proof. Moreover, we prove the $\bbS$-level statement for completeness.

\subsection{Main results}

Again, $Q$ is a closed smooth manifold, not necessarily oriented or spin, and $T^*Q$ its cotangent bundle endowed with its standard Liouville structure.

\begin{notation}
We denote by ${\rm Sp}$ the stable $\infty$-category of spectra. If $\frakR$ is an $\mathbb{E}_\infty$-ring spectrum (e.g., $\bbS$ or MU), then ${\rm Mod}_\frakR$ is the stable $\infty$-category of $\frakR$-modules; note, ${\rm Mod}_\bbS={\rm Sp}$. Also, ${\rm BGL}_1(\frakR)$ is the classifying space of $\frakR$-local systems. For a topological space $X$ and an $\frakR$-local system $\Phi:X\to{\rm BGL}_1(\frakR)$ on $X$, we denote by $X^\Phi\in{\rm Mod}_\frakR$ its associated Thom spectrum defined via the colimit
    \begin{equation}
    X^\Phi\equiv\operatorname{colim}\big(X\xrightarrow{\Phi}{\rm BGL}_1(\frakR)\to{\rm Mod}_\frakR\big).
    \end{equation}
\end{notation}

Our first offering is an MU-Viterbo isomorphism.

\begin{thm}\label{thm:mainMU}
Let $\frakF^{{\rm MU}}$ be the canonical ${\rm MU}$-Floer homotopy type of $T^*Q$; there exists an ${\rm MU}$-local system $\Sigma^{-\dim Q}\calV\otimes\epsilon:\calL Q\to{\rm BGL}_1({\rm MU})$ which satisfies
    \begin{equation}\label{eqn:MUViterbo}
    \frakF^{{\rm MU}}\simeq(\calL Q)^{\Sigma^{-\dim Q}\calV\otimes\epsilon}\in{\rm Mod}_{\rm MU}.
    \end{equation}
In particular, we recover the (co)homological statement
    \begin{equation}
    SH^{-*}(T^*Q;\bbZ)\cong H_*(\calL Q;\eta),
    \end{equation}
where $\eta$ is a local system explicitly described in \eqref{eqn:localsystem}.
\end{thm}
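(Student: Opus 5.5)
We follow the Abouzaid/Cohen strategy for the Viterbo isomorphism, but work with flow categories and their bordism-theoretic enrichments. The plan is to realize both sides of \eqref{eqn:MUViterbo} as MU-modules associated to MU-oriented flow categories, and then compare those flow categories through a geometric bordism. We first choose a Morse–Bott (or generic Morse) energy functional on a finite-dimensional approximation $\calL^{N}Q$ of the loop space (piecewise geodesics). This gives a flow category $\flow(E)$ whose Thom spectrum, twisted by the stable index bundle, is $\Sigma^\infty_+\calL Q$. Its moduli spaces are manifolds with corners, and their stable tangent bundles are controlled by a virtual bundle $\calV$ on $\calL Q$. This $\calV$ is the index bundle of the linearized geodesic operator, i.e. essentially the pullback of $TQ$ along evaluation together with its loop-monodromy.

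On the Floer side, take a quadratic Hamiltonian $H$ on $T^*Q$ with Reeb/geodesic orbits. The associated Floer flow category carries canonical stable complex structures, and hence defines $\frakF^{\rm MU}$. The comparison goes through Abouzaid's moduli spaces of half-cylinders with Lagrangian boundary on the zero section, or equivalently Cohen's hybrid Morse–Floer moduli spaces. These define a bimodule between the Floer flow category and the Morse flow category. We will show it is invertible by an action/energy filtration argument: on the diagonal of action levels the bimodule consists of points, by the index-equality of orbit and geodesic, so it is an equivalence of flow categories up to a twist.

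The key step, and the main obstacle, is to identify the orientation twist. The Floer moduli spaces are stably complex, whereas the Morse moduli spaces have tangent bundles described by $\calV$. The mixed moduli spaces glue these together via a Cauchy–Riemann operator on a half-cylinder with totally real boundary condition $TQ\subset T(T^*Q)$. By the standard index/gluing identity, the index bundle of this operator, viewed as a map to ${\rm BO}$, is the complexification of $\calV$ plus a correction. Passing to ${\rm BU}\to{\rm BGL}_1({\rm MU})$, the complexification $\calV\otimes\bbC$ is canonically MU-oriented. The remaining discrepancy is the comparison between $\calV$ and $\calV\otimes_\bbR\bbC$ as real bundles. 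It is measured by a local system $\epsilon$ coming from the loop of Lagrangians $\gamma^*TQ$, namely the determinant/Maslov-type class and $w_2$ transgressed to $\calL Q$. This produces the MU-local system $\Sigma^{-\dim Q}\calV\otimes\epsilon$; the shift by $-\dim Q$ comes from the half-cylinder's boundary index normalization. We would carefully construct $\epsilon$ as the composite $\calL Q\to\calL{\rm BO}\to{\rm BGL}_1({\rm MU})$ given by transgressing the class of $TQ$, checking compatibility with gluing at the corners, including coherence of the MU-orientations on the bimodule. This coherence is the delicate point.

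Finally, a standard Morse-theoretic result identifies the Thom spectrum of the twisted Morse flow category with $(\calL Q)^{\Sigma^{-\dim Q}\calV\otimes\epsilon}$, passing to the colimit over $N$ and over slopes of $H$ using the action filtration. For the homological corollary, we apply $H\bbZ\otimes_{\rm MU}-$ and the Thom isomorphism. Since $H\bbZ$ is complex-oriented, the $\calV$-twist disappears up to a $\pm1$ local system, namely the orientation character of $\calV$ times the image of $\epsilon$ in ${\rm BGL}_1(\bbZ)=B\bbZ/2$. We check that this equals $\eta$ from \eqref{eqn:localsystem}: it is the transgression of $w_2(Q)$, twisted by $w_1$.
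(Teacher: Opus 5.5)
Your overall architecture (Floer flow category, hybrid half-cylinder bimodule into a twisted Morse flow category on $\calL^rQ$, colimits over slopes and $r$) matches the paper's. The gap is at the step you call delicate, and that step is the whole content of the theorem. You must produce a twisted (spherical) complex orientation on the hybrid moduli spaces $\bbU_{\ell,r}(x,a)$ relative to a \emph{specified} MU-local system. It must be compatible with Floer and Morse breaking and with the inclusion and continuation 2-simplices. Your proposal does not construct this. The sentence ``the index bundle is the complexification of $\calV$ plus a correction, and the discrepancy is measured by $\epsilon$'' produces no MU-trivializations, let alone coherent ones. Moreover, the twist you propose, the transgression $\calL Q\to\calL{\rm BO}\to\Omega{\rm BO}\simeq{\rm O}\to{\rm BGL}_1({\rm MU})$, is precisely the candidate the paper states as Conjecture~\ref{conj:main} and does not prove. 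Matching it with the index-theoretic twist carried by the half-cylinder operators is exactly what is open.

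Your $\calV$ is also misdescribed. On $\calL^rQ$ the Morse moduli spaces are canonically stably framed, so the twist is imposed on the framings, not carried by the tangent bundles. A twist pulled back from a real virtual bundle through ${\rm BGL}_1(\bbS)$ is $\bbS$-liftable, so by \cite{BP26} (the case $T^*\bbC P^n$, $n\geq 3$ odd) all the essential content must sit in the piece you have not built.

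The paper avoids the comparison problem by defining $\calV$ index-theoretically, so that the orientation becomes tautological:
(i) for each critical point $a$ of $f^r$, choose a Cauchy--Riemann operator $D^\calV_a$ on $\bbR_{\leq0}\times S^1$ with totally real boundary $a^*TQ$, and set $\frakV_a=(\ind D^\calV_a)^\bbS\otimes_\bbS{\rm MU}$;
(ii) transport along gradient lines and simplices using complex-linear cylinder operators, which are canonically MU-trivial since ${\rm BU}\to{\rm BGL}_1({\rm MU})$ is null;
(iii) make the identifications canonical by parallel transport along $s\mapsto(s,1)$ (Lemma~\ref{lem:symmetrybreaking}).
Then $D^\calV_{\eval_r(u)}\#D^{\scrU_\ell}_u$ plus its conjugate has complex index, and this is the twisted orientation. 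Coherence on boundary strata is just gluing of index bundles (Corollary~\ref{cor:extension}). The shift $\epsilon$ on non-orientable loops comes from the $w(x)$ term in that index count; your $\eta$-check omits this grading shift.

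Two further steps are missing. First, twisting the framed Morse category by an MU-local system that does not come from a complex bundle gives only a spherically complex-oriented flow category, i.e.\ an $\frakR^{\rm scx}$-module. No ``standard Morse-theoretic result'' therefore yields an MU-level identification directly. The paper proves the equivalence after $-\otimes_{\rm MU}\frakR^{\rm scx}$. It then uses \cite[Section 7]{PS25b} to lift the twisted Morse categories and the Viterbo bimodules to genuinely complex-oriented ones, and concludes by conservativity of base change (Lemma~\ref{lem:conservative}).

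Second, your action-filtration invertibility argument needs matched Lagrangian/Hamiltonian data so that the diagonal pieces are identified. In a Morse--Bott energy setup these pieces are families over $Q$ and circles, not points. The diagonal identification must also respect the twist, which again presupposes the missing construction. The paper sidesteps this: the constructed map realizes Abouzaid's Viterbo isomorphism \cite{Abo14} on $H_*(-;\bbZ)$, all spectra involved are bounded below, and Whitehead's theorem finishes.
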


\begin{rem}
Our construction of the MU-local system $\Sigma^{-\dim Q}\calV\otimes\epsilon$, in particular $\calV$, is purely through index theory, i.e., Cauchy-Riemann operators. We conjecture a more homotopy-theoretic origin of this MU-local system, cf. Conjecture \ref{conj:main}.
\end{rem}

\begin{rem}
In \cite{BP26}, the notion of a ``Grothendieck-Riemann-Roch (GRR)'' flow category was introduced. These are flow categories with extra structure provided via index theory, and the stable $\infty$-category of such objects recovers the usual notion of complex-oriented flow categories (moreover, the latter recovers MU-modules, cf. \cite{AB24,HO26}). In particular, an index-theoretic model for the endomorphism mapping spectrum of MU was provided in \cite{BP26}. Because of this, it is completely possible to rephrase the construction of our MU-local system $\Sigma^{-\dim Q}\calV\otimes\epsilon$ in terms of this index-theoretic model for MU.
\end{rem}

For completeness, we also provide an $\bbS$-Viterbo isomorphism.

\begin{thm}\label{thm:mainS}
Let $\frakF^{\bbS,\Lambda_\std}$ be the framed Floer homotopy type of $T^*Q$ with respect to the ``standard'' stable $\bbR$-polarization given by the horizontal totally real distribution; this satisfies
    \begin{equation}\label{eqn:SViterbo}
    \frakF^{\bbS,\Lambda_\std}\simeq\Sigma^\infty_+\calL Q\in\spectra.
    \end{equation}
In particular, we recover the (co)homological statement
    \begin{equation}
    SH^{-*}(T^*Q;\eta^{-1})\cong H_*(\calL Q;\bbZ),
    \end{equation}
where $\eta$ is a local system explicitly described in \eqref{eqn:localsystem}.
\end{thm}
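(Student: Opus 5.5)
We follow the Abouzaid--Kragh/Cohen strategy for the Viterbo isomorphism, carried out at the level of framed flow categories. Concretely, we realize both sides of \eqref{eqn:SViterbo} as $\bbS$-modules associated to stably framed flow categories and build a framed bordism between them.

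First, we model $\Sigma^\infty_+\calL Q$ by a Morse-theoretic flow category. Choose a metric on $Q$ and consider the energy functional on a finite-dimensional approximation of $\calL Q$ (piecewise geodesics), perturbed to be Morse. Its gradient flow category $\flow^{\rm Morse}$ has objects the critical points $\gamma$. Its morphism spaces are the compactified moduli of broken gradient lines, and these carry framings relative to the unstable bundles. Standard Morse-theoretic Floer homotopy (the Cohen--Jones--Segal construction) then identifies the realization of $\flow^{\rm Morse}$ with $\Sigma^\infty_+\calL Q$. Here we need the framing data to come from the negative eigenspaces $E^-_\gamma$ of the Hessian, with no further twist.

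Second, we choose a Hamiltonian quadratic at infinity, namely $H=\tfrac12|p|^2$ perturbed. For such $H$, the Floer flow category $\flow^{\rm Floer}$ is framed using $\Lambda_\std$. Its realization is $\frakF^{\bbS,\Lambda_\std}$, with orbits in bijection with the geodesics $\gamma$. At each orbit $x_\gamma$, the Floer framing data is the index bundle of a Cauchy--Riemann operator on the plane $\bbC$ with asymptotic condition given by the linearized flow at $x_\gamma$. This operator is trivialized using $\Lambda_\std\otimes\underline{\bbC}$. By a linear gluing and deformation argument, namely deforming the Lagrangian boundary condition to the vertical/horizontal splitting, this index bundle is identified with the virtual space $E^-_\gamma$ in a way compatible with gluing. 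This identification is the key input: with $\Lambda_\std$ there is no discrepancy, whereas with the canonical complex orientation one picks up the $\calV\otimes\epsilon$ twist of Theorem~\ref{thm:mainMU}.

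Third, we construct Abouzaid--Kragh style comparison bimodules. One is the moduli of half-cylinders $u:(-\infty,0]\times S^1\to T^*Q$, solving Floer's equation, with $u(0,\cdot)$ on the zero section and the resulting loop flowing down the Morse gradient. The other runs in the opposite direction. Both are equipped with stable framings induced by $\Lambda_\std$, which is compatible with the zero section since $\pi^*TQ|_Q=TQ$. These bimodules give morphisms of framed flow categories. We then show the compositions are framed-bordant to the identity bimodules, using the usual Viterbo arguments with the action filtration together with the fact that low-energy moduli spaces consist of constant solutions. This yields \eqref{eqn:SViterbo}.

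The main obstacle is the coherent matching of framings in the second and third steps. The index bundles of the half-cylinder operators, with mixed Lagrangian and asymptotic conditions, must be identified with $E^-_\gamma\oplus T Q$-type data compatibly with all gluing maps, as stable framings rather than merely as orientations. I would first prove it for the linearized operators via a homotopy of boundary conditions inside $\Lambda_\std$, and then invoke contractibility of the relevant spaces of choices. Finally, the homological corollary follows by applying $H\bbZ$. Since the $\bbS$-framing differs from the $\bbZ$-orientation used to define $SH^{-*}(T^*Q;\bbZ)$ exactly by the orientation line of $\Lambda_\std$ relative to the canonical one, which is the local system $\eta$ of \eqref{eqn:localsystem}, we obtain $SH^{-*}(T^*Q;\eta^{-1})\cong H_*(\calL Q;\bbZ)$.
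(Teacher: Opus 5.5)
Your moduli spaces are the paper's: half-cylinders with boundary on the zero section, evaluated into a Morse model of $\calL Q$, cut down by stable manifolds, and framed via $\Lambda_\std$ using $\pi^*TQ|_Q=TQ$. Where you genuinely diverge is in proving that the resulting map is an equivalence.

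The paper builds only the one framed bimodule $\bbU^\bbS_{\ell,r}$. It goes from the Floer category of a linear-slope Hamiltonian $H^\ell$ to the Morse category of $f^r$ on $\calL^rQ$. The paper checks compatibility with both directed systems via the flow $2$-simplices $\bbU^\bbI$ and $\bbU^\frakc$. It then observes that the induced map $\frakF^{\bbS,\Lambda_\std}\to\Sigma^\infty_+\calL Q$ is, on integral homology, Abouzaid's Viterbo isomorphism \cite[Theorem 5.1.1]{Abo14}, twisted by $\eta^{-1}$ as in Lemma \ref{lem:localsystemcomputation}. Since everything is bounded below, the Whitehead theorem finishes. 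So the paper needs no reverse bimodule, no bordism of composites to the identity, and no spectral action-filtration argument.

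Your route would be self-contained and would reprove the homological isomorphism rather than import it. The cost is framing many more moduli spaces: the reverse half-cylinders and the cylinders interpolating the composites, where the non-orientable subtleties recur. Note also that a filtration argument gives triangularity with invertible diagonal, not a bordism to the identity; that does suffice for an equivalence. Finally, the paper's $\frakF^{\bbS,\Lambda_\std}$ is a colimit over linear slopes, and $\Sigma^\infty_+\calL Q$ is a colimit over the approximations $\calL^rQ$. A single quadratic Hamiltonian and a single approximation, which only sees a sublevel set of $\calL Q$, would need an extra comparison.

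Two points in your framing step need correcting.

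\emph{The object-level framing data.} For $\frakF^{\bbS,\Lambda_\std}$ this is not the index of a plane operator trivialized by $\Lambda_\std\otimes\underline{\bbC}$.
If $x^*TQ$ is non-orientable, no such trivialization exists. If it is orientable, the trivialization is ambiguous, by the orientation and by $\pi_1\mathrm{SO}$. That ambiguity is a spin-structure twist of exactly the kind entering $\eta$. The paper instead uses the Floer abstract cap $D^\calA_x$ on $\bbR_{\geq0}\times S^1$ with totally real boundary condition $x^*\Lambda_\std$. The plane operator appears only when comparing $\det D^\calA_x$ with Abouzaid's $\frako_x$. With this choice you need no object-by-object identification with $E^-_\gamma$. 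Lemma \ref{lem:symmetrybreaking2} gives $T\scrU_\ell(x)\cong\ind D^\calA_x$, hence $T\scrU_\ell(x,a)+\underline{\bbR}^{I(a)}\cong\ind D^\calA_x$. This already matches the Floer framing against the standard Morse framing.

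\emph{Contractibility of choices.} You should not invoke contractibility of the space of homotopies of boundary conditions. In Lemma \ref{lem:symmetrybreaking}, the homotopy from the glued boundary condition to $\Lambda_\std$ is non-canonical; a choice amounts to an isomorphism $T_{x(0)}Q\cong T_{y(0)}Q$. The paper pins it down by parallel transport along the line $s\mapsto(s,1)$. This deliberately breaks the $S^1$-symmetry in order to cover non-orientable $Q$. With these two corrections your framings agree with the paper's.
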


\begin{rem}
As already mentioned, it was shown in \cite{BP26} that the canonical MU-Floer homotopy type of $T^*\bbC P^n$, $n\in\bbZ$ at least 3 and odd, cannot possibly be equivalent to $\frakX\otimes_\bbS{\rm MU}$, for any $\frakX\in\spectra$. Observe, in this case, $\bbC P^n$ is spin; however, the MU-local system 
    \begin{equation}
    \Sigma^{-\dim Q}\calV\otimes\epsilon:\calL\bbC P^n\to{\rm BGL}_1({\rm MU}),
    \end{equation}
in this case, \emph{cannot} possibly lift to ${\rm BGL}_1(\bbS)$ by the aforementioned result.
\end{rem}

\subsection{Speculations and symmetry-breaking}
\subsubsection{Symmetry-breaking}\label{part:symmetrybreaking}
We would like to begin this subsection by making explicit the interaction between the various constructions in the present article and the lurking $S^1$-action in the background. The short answer: we explicitly break $S^1$-symmetry in Lemma \ref{lem:symmetrybreaking} in order to account for the non-orientable generality. The long answer is as follows.

Recall, $SH^*(T^*Q;\bbZ)$ admits the structure of a Batalin-Vilkovisky (BV) algebra, and this agrees with the BV algebra structure on $H_{-*}(\calL Q;\eta)$, cf. \cite[Theorem 4.1.1]{Abo14}. In fact, this equivalence refines, at the (co)chain level, to an equivalence of algebras over the framed $E_2$-operad. One subtle point is that, in the case that $Q$ is non-oriented, the equivalence of framed $E_2$-structures should be taken \emph{only} with $\bbZ/2$-gradings. In fact, this can already be seen at the level of BV algebras because, in the non-oriented case, the product and BV operator only satisfy ``twisted graded commutativity'', cf. \cite[Remark 2.1.1]{Abo14}. Meanwhile, if $Q$ is oriented, this issue does not arise and we may upgrade to $\bbZ$-gradings.

Now, the framed $E_2$-structure is related to the $S^1$-action given by loop rotation. Thus, the (co)homological observation of the previous paragraph may have spectral consequences: perhaps we should not expect an MU or $\bbS$-Viterbo isomorphism to respect $S^1$-equivariance in general, since, if it did, we might be on our way to a contradiction with the previous paragraph. On the other hand, we do expect an MUP-Viterbo isomorphism, in general, to respect the framed $E_2$-structures involved, where MUP is the \emph{periodic} complex bordism spectrum. (Recall, the canonical MUP-Floer homotopy type will recover $\bbZ/2$-graded integral symplectic cohomology.)

The reason we bring this point up is to justify our ``$S^1$-symmetry-breaking'' in the present article. In particular, our construction of twisted stable framings on the Floer moduli spaces, and our construction of the MU-local system in Theorem \ref{thm:mainMU}, seem to require us to fix the straight line $s\mapsto(s,1)$ on the cylinder $\bbR\times S^1$ in order to deal with the non-oriented generality, cf. Lemma \ref{lem:symmetrybreaking}.

\begin{rem}
If we were instead working in the oriented case, we could circumvent these issues by utilizing this extra structure. E.g. we could replace the use of Lemma \ref{lem:symmetrybreaking} with the alternative Lemma \ref{lem:nonsymmetrybreaking}.
\end{rem}

\begin{rem}
We should mention Rezchikov's construction of a cyclotomic structure on the $\bbS$-Floer homotopy type, assuming the background Liouville manifold admits an appropriately equivariant stable $\bbR$-polarization, cf. \cite{Rez24}.
\end{rem}

\begin{ques}
This now leads to the following question: what is the statement of the MUP-Viterbo isomorphism when taking into account the framed $E_2$-structures? I.e., what is the MUP-local system on $\calL Q$ which does not break $S^1$-symmetry?
\end{ques}

\subsubsection{Speculations}
Now, some speculations. It is natural to wonder whether there is a purely homotopy-theoretic construction of the MU-local system detailed in Theorem \ref{thm:mainMU}. In fact, we may construct a natural candidate, as follows.

Consider the standard cofiber sequence 
    \begin{equation}
    {\rm BU}\to{\rm BO}\to{\rm B(O/U)}\simeq{\rm O},
    \end{equation}
where the equivalence is real Bott periodicity. Since the composition 
    \begin{equation}
    {\rm BU}\to{\rm BO}\to{\rm BGL}_1(\bbS)\to{\rm BGL}_1({\rm MU})
    \end{equation}
is canonically null (recall, BU is complex-oriented), the universal property of the cofiber yields a map 
    \begin{equation}\label{eqn:canonicalOMU}
    {\rm O}\to{\rm BGL}_1({\rm MU}).
    \end{equation}
Meanwhile, consider a map 
    \begin{equation}
    [TQ]:Q\to{\rm BO}
    \end{equation}
which classifies the stable tangent bundle of $Q$. From here, we may consider the local system
    \begin{equation}\label{eqn:conjecture}
    \widetilde{\calV}:\calL Q\xrightarrow{\calL [TQ]}\calL{\rm BO}\to\Omega{\rm BO}\simeq{\rm O}\to{\rm BGL}_1({\rm MU}),
    \end{equation}
where the second map is the retraction coming from the fact that ${\rm BO}$ is a(n infinite) loop space, the equivalence is real Bott periodicity, and the last map is \eqref{eqn:canonicalOMU}.

\begin{conj}\label{conj:main}
We have that 
    \begin{equation}\label{eqn:conj}
    (\calL Q)^{\Sigma^{-\dim Q}\widetilde{\calV}\otimes\epsilon}\simeq(\calL Q)^{\Sigma^{-\dim Q}\calV\otimes\epsilon}.
    \end{equation}
\end{conj}

\begin{rem}
The map $\calL{\rm BO}\to\Omega{\rm BO}$ breaks $S^1$-symmetry, so we require a proof that, at least in the oriented case, the left hand side of \eqref{eqn:conj} is appropriately $S^1$-equivariant. Of course, if the previous conjecture holds, then the fact that the right hand side of \eqref{eqn:conj} should carry an $S^1$-action yields the desired result; however, one would perhaps want a purely homotopy-theoretic proof.
\end{rem}

\subsection*{Acknowledgments}
The present article essentially began as a short note on the present author's website which only described the framed case; its appearance was prompted by a question asked during his qualifying exam by his advisor: Paul Seidel. The author would like to thank Paul Seidel for initially leading him down this line of thinking; the author would also like to thank the other two qualifying exam committee members: Jeremy Hahn and Tristan Ozuch. The author thanks Thomas Kragh and Noah Porcelli for discussions over the years. This work was partially supported by an NSF Graduate Research Fellowship award.

\section{Flow categories}
We will quickly review the algebraic preliminaries concerning flow categories, following \cite{AB24}. Cf. also \cite{HO26}.

\subsection{Unstructured}
Let $X$ be a smooth manifold with corners; by this, we will always mean a ``$\langle k\rangle$-manifold'' in the sense of \cite{Lau00}. There is a poset associated to $X$, denoted $\calP_X$, such that objects are connected corner strata $F\subset X$ with a morphism $F\to F'$ if and only if $F'\subset\overline{F}$. Moreover, there is functor $\codim:\calP_X\to\bbZ_{\geq0}$ sending each object to its codimension. We say that $X$ is \emph{stratified} by $\calP_X$.\footnote{In the general theory, coarser stratifications than just labelings by corner strata are needed; this leads to the concept of a ``model for manifolds with corners'', cf. \cite[Definition 2.1]{AB24}. In the present article, we will not require general models, so we ignore this point for brevity. Note, $\calP_X$ is an example of a model.} A morphism $X\to X'$ of smooth manifolds with corners is a smooth map which is the inclusion of a union of disjoint boundary faces; this induces a codimension-preserving functor $\calP_X\to\calP_{X'}$. For $p\in\calP_X$, we write $(\partial^pX,\partial^p\calP_X)$ for the stratified smooth manifold with corners consisting of the union of faces in $X$ living under the object $p$ together with the stratifying category given by the undercategory $\partial^p\calP_X\equiv(\calP_X)_{p/}$.

For the following construction, cf. \cite[Section 4]{AB24}. Let $\vec{\calP}\equiv\{\calP_0,\ldots,\calP_n\}$ be any $(n+1)$-tuple of sets, $n\geq0$. For any two 
    \begin{equation}
    x\in\calP_j\;\;{\rm and}\;\;y\in\calP_{j'},\;\;0\leq j\leq j'\leq n,
    \end{equation}
we may construct a category $\vec{\calP}(x,y)$, as follows.
\begin{itemize}
\item The objects are trees, with only bivalent vertices, equipped with the following extra decorations.
    \begin{itemize}
    \item Edges are labeled by elements of $\calP_k$, $j\leq k\leq j'$, such that elements on edges appear in ascending order from left to right with respect to the labels $\{j,\ldots,k,\ldots,j'\}$. Moreover, the incoming resp. outgoing leaf is labeled by $x$ resp. $y$.
    \item Vertices are labeled by a subset of $\{k+1,\ldots,k'-1\}$, where the edge to the left resp. right of the considered vertex is labeled by an element of $\calP_k$ resp. $\calP_{k'}$.
    \end{itemize}
\item There is a morphism $T\to T'$ if the following conditions are satisfied.
    \begin{itemize}
    \item $T$ is obtained from $T'$ by collapsing internal edges.
    \item The labels of the edges of $T$ agree with the labels of the corresponding uncollapsed edges of $T'$.
    \item Given any vertex $v\in T$, the label on $v$ contains the union of the labels of the vertices $v'\in T'$ which are collapsed to $v$ together with any $k$, $j<k<j'$, with the property that (1) $T'$ contains an edge labeled by $\calP_k$ and (2) all edges of $T'$ labeled by $\calP_k$ are collapsed to $v$.
    \end{itemize}
\end{itemize}

For any three $x\in\calP_j$, $y\in\calP_{j'}$, and $z\in\calP_k$, $0\leq j\leq k\leq j'\leq n$, there is a natural functor 
    \begin{equation}
    \vec{\calP}(x,z)\times\vec{\calP}(z,y)\to\vec{\calP}(x,y)
    \end{equation}
given by concatenation. In particular, the collection $\big\{\vec{\calP}(x,y)\big\}_{x,y}$ yields a strict 2-category which we also denote by $\vec{\calP}$. There are natural inclusions of strict 2-categories 
    \begin{equation}
    \partial^j\vec{\calP}\equiv\big(\calP_0,\ldots,\widehat{\calP_j},\ldots,\calP_n\big)\hookrightarrow\vec{\calP},\;\;\forall j;
    \end{equation}
it is straightforward to see that we have the following equality of strict 2-categories: 
    \begin{equation}
    \partial^{j}\partial^{j'}\vec{\calP}=\partial^{j'-1}\partial^{j}\vec{\calP},\;\;0\leq j<j'\leq n.
    \end{equation}

\begin{defin}
An \emph{unstructured flow $n$-simplex} $\bbX\equiv\bbX_{\langle 0\cdots n\rangle}$ on an object $(n+1)$-tuple $\vec{\calP}$ consists of the following data. In the following, we always take 
\begin{equation}
x\in\calP_j,\;\;y\in\calP_{j'},\;\;{\rm and}\;\;z\in\calP_k,\;\;0\leq j\leq k\leq j'\leq n,
\end{equation}
unless otherwise specified.
\begin{enumerate}
\item For any two $x$ and $y$, a stratified compact smooth manifold with corners 
\begin{equation}
\big(\bbX(x,y),\calP_{\bbX(x,y)}=\vec{\calP}(x,y)\big).
\end{equation}
\item For any three $x$, $y$, and $z$, a morphism 
\begin{equation}
\bbX(x,z)\times\bbX(z,y)\hookrightarrow\bbX(x,y)
\end{equation}
which is an embedding of a codimension 1 boundary stratum which lifts 
\begin{equation}
\vec{\calP}(x,z)\times\vec{\calP}(z,y)\to\vec{\calP}(x,y);
\end{equation}
moreover, the natural associativity diagram commutes.
\item For any two $x$ and $y$, the codimension 1 boundary strata of $\bbX(x,y)$ are enumerated by (1) morphisms of the above form and (2) stratified compact smooth submanifolds with corners 
\begin{equation}
\big(\partial^k\bbX(x,y),\calP_{\partial^k\bbX(x,y)}=\partial^k\vec{\calP}(x,y)\big),\;\;j<k<j';
\end{equation}
moreover, the natural associativity diagrams intertwining these commutes.
\end{enumerate}
\end{defin}

\begin{rem}
In the general theory, the previous definition is actually an ``elementary flow $n$-simplex'', cf. \cite[Definition 4.8]{AB24}; a flow $n$-simplex is technically a bit more data, cf. Definition 4.10 in \emph{loc. cit.}. However, since we will only use elementary flow $n$-simplices in the present article, we will elide the distinction. 
\end{rem}

\begin{rem}
A flow 0-simplex is simply referred to as a \emph{flow category}.
\end{rem}

\begin{example}
A simple example of an unstructured flow category is the \emph{unit}, denoted $\unit$; this is an unstructured flow category with a single object and no morphisms. 
\end{example}

We denote by $\flow$ the stable $\infty$-category of unstructured flow categories, cf. \cite[Theorem 1.6]{AB24}.

\subsection{Structured}
The point of a structured flow category (or, more generally, a structured flow $n$-simplex) is to endow the moduli spaces defining the morphism spaces with coherent tangential structure. Two of the most important ones for Floer homotopy theory are as follows.

\subsubsection{(Spherically) complex-oriented}

Let $X$ be a space. Recall, a (complex) virtual bundle on $X$ is a pair 
    \begin{equation}
    E\equiv(E^+,E^-)\equiv E^+-E^-
    \end{equation}
such that $E^\pm\to X$ is a (complex) vector bundle. An equivalence of (complex) virtual bundles, written 
    \begin{equation}
    E\cong F,
    \end{equation}
is an isomorphism of (complex) vector bundles 
    \begin{equation}
    E^+\oplus F^-\cong E^-\oplus F^+.
    \end{equation}

\begin{defin}
A \emph{complex-oriented flow $n$-simplex} consists of the following data. 
\begin{enumerate}
\item A flow $n$-simplex $\bbX$.
\item For any $x$, a virtual vector space $V_x$.
\item For any two $x$ and $y$, a vector bundle $W(x,y)\to\bbX(x,y)$ satisfying 
    \begin{equation}
    W(x,y)\cong W(x,z)\oplus W(z,y)
    \end{equation}
over $\bbX(x,z)\times\bbX(z,y)$, such that the natural associativity diagram commutes.
\item For any two $x$ and $y$, a complex vector bundle $I(x,y)\to\bbX(x,y)$ satisfying 
    \begin{equation}
    I(x,y)\cong I(x,z)\oplus I(z,y)
    \end{equation}
over $\bbX(x,z)\times\bbX(z,y)$, such that the natural associativity diagram commutes.
\item For any two $x$ and $y$, an equivalence of virtual bundles
    \begin{equation}
    T\bbX(x,y)+\underline{V}_y+W(x,y)\cong I(x,y)+W(x,y)+\underline{\bbR}^{\abs{\{j+1,\ldots,j'\}}}+\underline{V}_x,
    \end{equation}
where $x\in\calP_j$ and $y\in\calP_{j'}$, such that the natural associativity diagram commutes.
\end{enumerate}
\end{defin}

We denote by $\flow^{\rm cx}$ the stable $\infty$-category of complex-oriented flow categories, cf. \cite[Theorem 1.6]{AB24}; in fact, 
    \begin{equation}
    \flow^{\rm cx}\cong{\rm Mod}_{\rm MU}
    \end{equation}
as stable $\infty$-categories, cf. \cite{HO26}. 

In the present article, we will require the following more general notion. We denote by $\frakR^{\rm scx}$ the spectrum defined via
    \begin{equation}
    \frakR^{\rm scx}\equiv{\rm colim}\Big(\rm{fib}\big({\rm BO}\to{\rm BGL}_1(\bbS)\to{\rm BGL}_1({\rm MU})\big)\to{\rm BO}\to{\rm BGL}_1(\bbS)\to{\rm Sp}\Big).
    \end{equation}
Observe, $\frakR^{\rm scx}$ is the Thom spectrum classifying spherical complex bordism, i.e., bordism of ${\rm MU}$-oriented smooth manifolds, and we have a natural map $i_{\rm MU}:{\rm MU}\to\frakR^{\rm scx}$. A flow categorical model for ${\rm Mod}_{\frakR^{\rm scx}}$ is as follows. 

\begin{defin}
A \emph{spherically complex-oriented flow $n$-simplex} consists of the following data. 
\begin{enumerate}
\item A flow $n$-simplex $\bbX$.
\item For any $x$, a virtual vector space $V_x$.
\item For any two $x$ and $y$, a vector bundle $W(x,y)\to\bbX(x,y)$ satisfying 
    \begin{equation}
    W(x,y)\cong W(x,z)\oplus W(z,y)
    \end{equation}
over $\bbX(x,z)\times\bbX(z,y)$, such that the natural associativity diagram commutes.
\item For any two $x$ and $y$, a virtual bundle $I(x,y)\to\bbX(x,y)$ with a fixed trivialization after taking its associated stable spherical fibration and smashing with ${\rm MU}$, 
    \begin{equation}
    I(x,y)^\bbS\otimes_\bbS{\rm MU},
    \end{equation} 
satisfying 
    \begin{equation}
    I(x,y)\cong I(x,z)\oplus I(z,y)
    \end{equation}
over $\bbX(x,z)\times\bbX(z,y)$, such that the natural associativity diagram commutes.
\item For any two $x$ and $y$, an equivalence of virtual bundles
    \begin{equation}
    T\bbX(x,y)+\underline{V}_y+W(x,y)\cong I(x,y)+W(x,y)+\underline{\bbR}^{\abs{\{j+1,\ldots,j'\}}}+\underline{V}_x,
    \end{equation}
where $x\in\calP_j$ and $y\in\calP_{j'}$, such that the natural associativity diagram commutes.
\end{enumerate}
\end{defin}

We denote by $\flow^{\rm scx}$ the stable $\infty$-category of spherically complex-oriented flow categories, cf. \cite[Theorem 1.6]{AB24}. And, as before, 
    \begin{equation}
    \flow^{\rm scx}\cong{\rm Mod}_{\frakR^{\rm scx}}
    \end{equation}
as stable $\infty$-categories, cf. \cite{HO26}. 

We will require the following in the sequel. 

\begin{lem}\label{lem:conservative}
The base-change functor 
    \begin{equation}
    -\otimes_{\rm MU}\frakR^{\rm scx}:{\rm Mod}_{\rm MU}\to{\rm Mod}_{\frakR^{\rm scx}}
    \end{equation}
is conservative.
\end{lem}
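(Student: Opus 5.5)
The plan is to exhibit ${\rm MU}$ as a retract of $\frakR^{\rm scx}$ \emph{as rings under} ${\rm MU}$. Concretely, I would construct a ring map $r:\frakR^{\rm scx}\to{\rm MU}$ whose composite with $i_{\rm MU}$ is homotopic to the identity of ${\rm MU}$. Granting this, conservativity is immediate. Base-change is functorial, so
\begin{equation}
(M\otimes_{\rm MU}\frakR^{\rm scx})\otimes_{\frakR^{\rm scx}}{\rm MU}\simeq M\otimes_{\rm MU}{\rm MU}\simeq M,
\end{equation}
naturally in $M\in{\rm Mod}_{\rm MU}$. Hence if $f:M\to N$ becomes an equivalence after applying $-\otimes_{\rm MU}\frakR^{\rm scx}$, applying $-\otimes_{\frakR^{\rm scx}}{\rm MU}$ shows that $f$ itself is an equivalence. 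For the formula to hold it suffices that $r\circ i_{\rm MU}\simeq{\rm id}$ as $\mathbb{E}_1$-ring maps, so that the two module structures on ${\rm MU}$ agree.

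To construct $r$, write $F\equiv{\rm fib}({\rm BO}\to{\rm BGL}_1({\rm MU}))$, so that $\frakR^{\rm scx}$ is the Thom spectrum of $F\to{\rm BO}\to{\rm BGL}_1(\bbS)$. All maps in ${\rm BO}\to{\rm BGL}_1(\bbS)\to{\rm BGL}_1({\rm MU})$ are infinite loop maps, so $F$ is an infinite loop space and $F\to{\rm BO}$ is an infinite loop map. This makes $\frakR^{\rm scx}$ an $\mathbb{E}_\infty$-ring, and the tautological null-homotopy of $F\to{\rm BGL}_1({\rm MU})$ is a null-homotopy of infinite loop maps. By the universal property of Thom spectra (Ando--Blumberg--Gepner--Hopkins--Rezk), an $\mathbb{E}_\infty$ null-homotopy of the composite $F\to{\rm BGL}_1(\bbS)\to{\rm BGL}_1({\rm MU})$ is the same as an $\mathbb{E}_\infty$ ${\rm MU}$-orientation $F^{\bbS}\to{\rm MU}$, i.e.\ a Thom class. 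This gives an $\mathbb{E}_\infty$-ring map $r:\frakR^{\rm scx}\to{\rm MU}$.

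Likewise, $i_{\rm MU}$ is induced by the infinite loop map ${\rm BU}\to F$. That map comes from the canonical null-homotopy of ${\rm BU}\to{\rm BO}\to{\rm BGL}_1({\rm MU})$ given by the universal complex orientation of ${\rm MU}$. The composite $r\circ i_{\rm MU}$ is then the ${\rm MU}$-orientation of the Thom spectrum of ${\rm BU}$ obtained by restricting the tautological null-homotopy along ${\rm BU}\to F$, which is by construction the universal complex orientation. It is therefore the $\mathbb{E}_\infty$ Thom class ${\rm MU}\to{\rm MU}$ corresponding to the standard orientation, which is the identity.

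The main obstacle is this last identification: making the coherence precise so that $r\circ i_{\rm MU}\simeq{\rm id}$ as ring maps, not merely as maps of spectra. I would handle it by working entirely in the $\infty$-category of spaces over ${\rm BGL}_1(\bbS)$ equipped with a null-homotopy after pushing to ${\rm BGL}_1({\rm MU})$. In that category, $F$ is the terminal such object over ${\rm BO}$, and ${\rm BU}\to F$ is the induced map. The orientation of a Thom spectrum $X^{\bbS}$ is natural in such objects $X$, so the orientation of ${\rm MU}$ pulled back from $F$ is the one defining ${\rm MU}$'s own Thom class. If only $\mathbb{E}_1$-coherence can be arranged cleanly, the argument in the first paragraph still goes through, since it only uses associative base-change.
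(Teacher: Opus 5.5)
Your proposal is correct and takes the same route as the paper. The paper's entire proof is that $i_{\rm MU}:{\rm MU}\to\frakR^{\rm scx}$ admits a natural retraction (quoted from Lemma 3.15 of Abouzaid--Blumberg), from which conservativity follows by base-changing back along the retraction, as in your first paragraph; your construction of $r$ through the Ando--Blumberg--Gepner--Hopkins--Rezk universal property, with $r\circ i_{\rm MU}\simeq{\rm id}$ because ${\rm BU}\to{\rm fib}\big({\rm BO}\to{\rm BGL}_1({\rm MU})\big)$ is defined by the canonical orientation of ${\rm MU}$, simply makes that cited retraction explicit.
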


\begin{proof}
This follows because there exists a natural retraction splitting $i_{\rm MU}$, cf. \cite[Lemma 3.15]{AB19}.
\end{proof}

\subsubsection{(Spherically) framed}

\begin{defin}
A \emph{framed flow $n$-simplex} consists of the following data. 
\begin{enumerate}
\item A flow $n$-simplex $\bbX$.
\item For any $x$, a virtual vector space $V_x$.
\item For any two $x$ and $y$, a vector bundle $W(x,y)\to\bbX(x,y)$ satisfying 
    \begin{equation}
    W(x,y)\cong W(x,z)\oplus W(z,y)
    \end{equation}
over $\bbX(x,z)\times\bbX(z,y)$, such that the natural associativity diagram commutes.
\item For any two $x$ and $y$, a trivial vector bundle $I(x,y)\to\bbX(x,y)$ satisfying 
    \begin{equation}
    I(x,y)\cong I(x,z)\oplus I(z,y)
    \end{equation}
over $\bbX(x,z)\times\bbX(z,y)$, such that the natural associativity diagram commutes.
\item For any two $x$ and $y$, an equivalence of virtual bundles
    \begin{equation}
    T\bbX(x,y)+\underline{V}_y+W(x,y)\cong I(x,y)+W(x,y)+\underline{\bbR}^{\abs{\{j+1,\ldots,j'\}}}+\underline{V}_x,
    \end{equation}
where $x\in\calP_j$ and $y\in\calP_{j'}$, such that the natural associativity diagram commutes.
\end{enumerate}
\end{defin}

We denote by $\flow^{\rm fr}$ the stable $\infty$-category of framed flow categories, cf. \cite[Theorem 1.6]{AB24}; in fact, 
    \begin{equation}
    \flow^{\rm fr}\cong{\rm Sp}
    \end{equation}
as stable $\infty$-categories, cf. Proposition 1.10 in \emph{loc. cit.} 

Analogously to spherically complex-oriented flow categories, we may define the stable $\infty$-category of spherically framed flow categories, denoted $\flow^{\rm sfr}$; this satisfies 
    \begin{equation}
    \flow^{\rm sfr}\cong{\rm Mod}_{\frakR^{\rm sfr}},
    \end{equation}
where $\frakR^{\rm sfr}$ is the Thom spectrum classifying spherical framed bordism:
    \begin{equation}
    \frakR^{\rm sfr}\equiv{\rm colim}\Big(\rm{fib}\big({\rm BO}\to{\rm BGL}_1(\bbS)\big)\to{\rm BO}\to{\rm BGL}_1(\bbS)\to{\rm Sp}\Big).
    \end{equation}

\section{Free loop spaces}
We begin with string topology. This section largely follows, and lifts to spectra, \cite[Chapter 3]{Abo14}.

\subsection{Finite-dimensional approximations}
Let $Q$ be a closed smooth manifold. We fix a Riemannian metric $g$ on $Q$ of sufficiently large injectivity radius; moreover, we denote by $d(\cdot,\cdot)$ the associated distance function. We will probe the topology of $\calL Q$ using finite-dimensional approximations. Essentially, this allows us to use Morse theory to study the topology of $\calL Q$.

For each $r\in\bbZ_{>0}$, we define 
    \begin{equation}
    \calL^r Q\equiv\bigcap_{i=1}^r\rho^{-1}_i\big((-\infty,\delta^r_i]\big)\subset Q^r, \;\;\delta^r_i\in\bbR_{\geq0},
    \end{equation}
where 
    \begin{align}
    \rho_i:Q^r&\to\bbR \\ 
    (q_0,\ldots,q_{r-1})&\mapsto d(q_i,q_{i+1}) \nonumber
    \end{align}
with $i$ a cyclic index.

\begin{lem}[Lemma 3.2.6 in \cite{Abo14}]
For a generic choice of $\{\delta^r_i\}_i$ sufficiently small, $\calL^rQ$ is a smooth manifold with corners.
\end{lem}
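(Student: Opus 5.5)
Here $\calL^rQ$ is cut out of the closed manifold $Q^r$ by the $r$ inequalities $\rho_i\le\delta^r_i$. The plan is to show that for generic small $\delta^r_i$ these constraints are transverse at every point where several are simultaneously active. Then the standard local model near a point with $k$ active constraints is $\bbR^{r\dim Q-k}\times[0,\infty)^k$.

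\textbf{Step 1 (smoothness of the $\rho_i$).} Each $\rho_i(q)=d(q_i,q_{i+1})$ is smooth on the open set where $0<d(q_i,q_{i+1})<\mathrm{inj}(g)$. The function $\rho_i$ is not smooth along the diagonal $q_i=q_{i+1}$, but there $\rho_i=0$. So if we take $\delta^r_i>0$, the level set $\{\rho_i=\delta^r_i\}$ stays away from the diagonal. Choosing all $\delta^r_i$ below the injectivity radius, each $\rho_i$ is smooth near its level set. Alternatively, one may work with $\rho_i^2$, which is smooth wherever $d<\mathrm{inj}(g)$.

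\textbf{Step 2 (transversality).} For $S\subset\{0,\dots,r-1\}$, consider the map
\begin{equation}
\rho_S=(\rho_i)_{i\in S}:U_S\to\bbR^{S},\qquad U_S\equiv\{q\in Q^r:\ 0<\rho_i(q)<\mathrm{inj}(g),\ i\in S\}.
\end{equation}
By Sard's theorem, the regular values of $\rho_S$ are of full measure in $\bbR^S$. Taking the finite intersection over all $S$, a generic vector $(\delta^r_i)_i$ in a small cube has $(\delta^r_i)_{i\in S}$ regular for every $S$. Hence at any point of $\calL^rQ$ whose active set is $S$, the differentials $\{d\rho_i\}_{i\in S}$ are linearly independent.

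\textbf{Step 3 (local model).} The implicit function theorem now gives local coordinates near such a point in which the $\rho_i-\delta^r_i$, $i\in S$, are coordinate functions. In these coordinates $\calL^rQ$ is locally $\bbR^{N-k}\times(-\infty,0]^k$, with $N=r\dim Q$ and $k=\abs{S}$. This makes $\calL^rQ$ a manifold with corners. The boundary faces are the sets $\{\rho_i=\delta^r_i\}\cap\calL^rQ$, so each point lies in exactly $k$ of them. Consequently the faces are embedded and $\calL^rQ$ is a $\langle r\rangle$-manifold in the sense of \cite{Lau00}.

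\textbf{Main obstacle.} One must check that genericity is actually achievable with values that are also sufficiently small, since later we need the $\calL^rQ$ to approximate $\calL Q$ as $r$ grows. This is fine because Sard yields a full-measure set of regular values inside any small cube. Independence may also be verified directly. When $S$ is not all of $\{0,\dots,r-1\}$, we can vary one endpoint $q_j$ that enters only one active constraint. Since $\abs{\grad\rho_i}=1$ with respect to each variable, this gives an explicit non-degeneracy argument for independence. The case $S=\{0,\dots,r-1\}$, where every constraint is active, is the one that genuinely requires the genericity of $\delta$.
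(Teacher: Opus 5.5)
Your proof is correct. You apply Sard's theorem to every sub-tuple $\rho_S$ on the open set where these functions are smooth, which makes the active constraints transverse for generic small $\delta^r_i$, and the implicit function theorem then gives the corner charts and the $\langle r\rangle$-structure. The paper gives no proof of its own and simply cites \cite[Lemma 3.2.6]{Abo14}; your Sard-plus-transversality argument is the standard genericity argument that lemma's "generic choice" hypothesis calls for, though I cannot confirm it matches Abouzaid's proof line by line.
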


Moreover, by choosing $\{\delta^r_i\}_{r,i}$ to satisfy $\delta^r_i\leq\delta^{r+1}_{i+1}$, $1\leq i\leq r$, we have natural inclusions 
    \begin{equation}
    i_{r,r+1}:\calL^rQ\hookrightarrow\calL^{r+1}Q
    \end{equation}
given by repeating the first point. Meanwhile, for each $r$, there is a natural map 
    \begin{equation}
    {\rm geo}:\calL^rQ\to\calL Q
    \end{equation}
defined by mapping a collection of points to the piecewise geodesic connecting them (parameterized at unit speed); this is compatible with the inclusions. Finally, we assume $\lim_{r\to+\infty}\sum_{i=1}^r\delta^r_i=+\infty$.

\begin{lem}[Proposition 3.2.4 in \cite{Abo14}]
We have that
    \begin{equation}\label{eqn:fda}
    \varinjlim_r\calL^rQ\simeq\calL Q.
    \end{equation}
\end{lem}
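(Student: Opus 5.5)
The plan is to produce, for each $r$, a homotopy equivalence (compatible in $r$) between $\calL^rQ$ and a subspace of $\calL Q$, and then pass to the colimit. Concretely, set $\calL^{(r)}Q\subset\calL Q$ to be the space of loops $\gamma$ which, after reparametrizing at constant speed on each of $r$ equal subintervals, have consecutive breakpoints $\gamma(i/r),\gamma((i+1)/r)$ at distance at most $\delta^r_i$. The map ${\rm geo}:\calL^rQ\to\calL Q$ lands in the piecewise-geodesic loops. Evaluation at the breakpoints $\gamma\mapsto(\gamma(0),\gamma(1/r),\ldots,\gamma((r-1)/r))$ gives a candidate inverse on a suitable subspace. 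The composite in one direction is the identity. In the other direction, straightening each segment to the unique minimal geodesic (possible since $\delta^r_i$ is below the injectivity radius) gives a homotopy to ${\rm geo}\circ\eval$, via the geodesic homotopy between each short path segment and the geodesic with the same endpoints.

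This only works on loops whose segments are short, so next I would build an exhaustion. By compactness of $Q$ and uniform continuity, any compact family $K\subset\calL Q$ (for instance the image of a sphere $S^n\to\calL Q$) is uniformly equicontinuous. Hence for $r$ large every $\gamma\in K$ has $d(\gamma(t),\gamma(t'))$ small whenever $\abs{t-t'}\leq1/r$. Here the hypothesis $\lim_r\sum_i\delta^r_i=+\infty$ enters, together with the compatibility $\delta^r_i\leq\delta^{r+1}_{i+1}$. I would in fact choose the thresholds so that $\min_i\delta^r_i$ stays bounded below by a fixed positive constant less than the injectivity radius along a cofinal sequence. The needed input is that for each $K$ some $r$ has all steps $\leq\min_i\delta^r_i$; one can refine the partition to $rm$ pieces and use $i_{r,rm}$, noting that repeating points is homotopic to subdividing.

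Then I would check that the maps $\pi_n(\calL^rQ)\to\pi_n(\calL Q)$ induced by ${\rm geo}$ assemble to isomorphisms on the colimit. For surjectivity, a sphere $K$ in $\calL Q$ is homotopic to ${\rm geo}\circ\eval(K)$ with $\eval(K)\subset\calL^rQ$ for $r\gg0$. For injectivity, a null-homotopy in $\calL Q$ of ${\rm geo}$ of a sphere is a compact disk family, so it lies in the image for some larger $r'$. The straightening homotopy then pulls it back to a null-homotopy in $\calL^{r'}Q$, after checking that ${\rm geo}\circ i_{r,r'}$ is homotopic to the refined sampling. Since these are CW-type spaces and homotopy colimits along closed cofibrations $i_{r,r+1}$ agree with colimits, Whitehead's theorem gives $\varinjlim_r\calL^rQ\simeq\calL Q$.

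The main obstacle is the compatibility bookkeeping between the inclusions $i_{r,r+1}$ (repeat the first point) and the sampling/straightening maps. The parametrization changes with $r$, and the distance thresholds must remain satisfied after refinement. I would handle this by reparametrization homotopies, since loops are defined up to homotopy, and by insisting that all $\delta$'s are below half the injectivity radius, so that geodesic interpolation is unique and continuous.
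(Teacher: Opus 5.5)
Your outline (sample a compact family of loops, straighten each short segment to the minimal geodesic, compare homotopy groups through the colimit) is the standard broken-geodesic approximation behind the cited Proposition 3.2.4 of \cite{Abo14}. The paper gives no argument of its own, and using equicontinuity lets you work with continuous loops directly. As written, however, your proof covers only a restricted choice of thresholds.

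Sampling at the uniform times $i/r$ lands in $\calL^rQ$ only if $\min_i\delta^r_i$ dominates the modulus of continuity of the family at scale $1/r$. That is why you impose $\min_i\delta^r_i\geq c>0$, but this is not among the hypotheses. They constrain the thresholds only along diagonals ($\delta^r_i\leq\delta^{r+1}_{i+1}$), so at each level one new threshold is unconstrained. It may be taken of order $1/r$ while $\sum_i\delta^r_i$ still diverges like a harmonic series, and then uniform sampling of a sufficiently long closed geodesic fails to lie in $\calL^rQ$ for all large $r$. In particular, your remark that $\lim_r\sum_i\delta^r_i=+\infty$ ``enters'' is not borne out: your argument never uses it.

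Restricting the choice is harmless for the paper, since the $\delta$'s are chosen there. The general fix, and the actual role of that hypothesis, is $\delta$-adapted sampling:
\begin{itemize}
\item first straighten at a fine uniform scale $1/N$, with $N$ chosen via equicontinuity so each segment lies in a convex ball, so the family becomes, up to homotopy, piecewise geodesic loops of length at most some $L$;
\item reparametrize proportionally to arc length;
\item sample at times $t_i=\sum_{j<i}\delta^r_j/\sum_j\delta^r_j$.
\end{itemize}
Once $\sum_j\delta^r_j\geq L$, the $i$-th arc has length at most $\delta^r_i$, so the samples lie in $\calL^rQ$ and each arc is short enough to straighten.

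Two smaller points. First, the subspace $\calL^{(r)}Q$ in your first paragraph is cut out by distances between breakpoints only, and straightening is not defined on it: a segment can wander far while its endpoints stay close. Drop it, since your equicontinuity step is what actually controls whole segments.

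Second, the injectivity step needs no straightening. Apply the sampling map to the null-homotopy, and compare $i_{r,r'}\circ g$ with the resampling of ${\rm geo}\circ g$ inside $\calL^{r'}Q$ by sliding the sample points monotonically along the constant-speed loop. Consecutive distances are bounded by arc lengths, which are linear in the time parameters. So every threshold $\delta^{r'}_i$ holds throughout as soon as it holds for both end configurations: by diagonal monotonicity for $i_{r,r'}\circ g$, and by the choice of sampling for the other. This is the real content of the ``bookkeeping'' you defer.
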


\subsection{Morse homotopy type}
In this subsection, we aim to recast \eqref{eqn:fda} in terms of Morse theory. We will do this via the Morse homotopy type.

\subsubsection{Flow categories and the finite-dimensional approximations}
We denote by $g^r$ the $r$-fold product metric on $Q^r$. Let $f^r\in C^\infty(\calL^rQ)$ be a Morse function whose gradient $\grad f^r$ points strictly outwards along the boundary $\partial\calL^r Q$. For any two critical points $a,b\in\crit(f^r)$, we denote by $\widetilde{\scrM}^r(a,b)$ the moduli space of Morse trajectories connecting $a$ to $b$, i.e., smooth maps $\gamma:\bbR_s\to\calL^rQ$ satisfying 
    \begin{equation}
    \begin{cases}
    \partial_s\gamma=-\grad f^r(\gamma) \\
    \lim_{s\to-\infty}\gamma(s)=a \\
    \lim_{s\to+\infty}\gamma(s)=b.
    \end{cases}
    \end{equation}
Observe, this moduli space may be viewed as the zero set of a section $\Xi_r$ of an appropriate Banach bundle over the space of all $W^{1,2}$-paths connecting $a$ to $b$; in particular, it is a smooth manifold of dimension $I(a)-I(b)$, with $I(\cdot)$ the Morse index, when the corresponding linearized operator $D(\Xi_r)_\gamma$ is a surjective Fredholm operator for each $\gamma\in\widetilde{\scrM}^r(a,b)$ --- this happens for generic $(f^r,g^r)$. We call such data \emph{Morse-Smale}. Moreover, $T\widetilde{\scrM}^r(a,b)$ is classified by the index bundle $\ind D\Xi_r$ of the family of Fredholm operators 
    \begin{equation}
    D\Xi_r\equiv\big\{D(\Xi_r)_\gamma\big\}_\gamma.
    \end{equation}

When $a\neq b$, there is a free, proper $\bbR$-action on $\widetilde{\scrM}^r(a,b)$ given by time-shift in the $s$-coordinate, and the $\bbR$-quotient $\scrM^r(a,b)$ admits a Gromov-compactification $\bbM^r(a,b)$ given by broken Morse trajectories. In fact, $\bbM^r(a,b)$ is a compact smooth manifold with corners, stratified by breakings at critical points, whose codimension 1 boundary strata are enumerated by gluing maps of the form 
    \begin{equation}
    \partial_c\equiv\bbM^r(a,c)\times\bbM^r(c,b)\hookrightarrow\bbM^r(a,b),
    \end{equation}
cf. for instance \cite{Weh12} (this is also a simpler version of \cite[Section 6]{Lar21}). 

We have the following two basic relations: 
\begin{enumerate}
\item a short exact sequence 
    \begin{equation}
    0\to\underline{\bbR}\to T\bbM^r(a,b)\vert_{\partial_c}\to T\bbM^r(a,c)\oplus T\bbM^r(c,b)\to0
    \end{equation}
given by taking a collar neighborhood of $\partial_c$, 
\item and a short exact sequence 
    \begin{equation}
    0\to\underline{\bbR}\to\ind D\Xi^{ab}_r\to T\bbM^r(a,b)\vert_{\operatorname{int}\bbM^r(a,b)}\to0
    \end{equation}
given by the translational direction.
\end{enumerate}
Moreover, there are straightforward relations when passing to higher codimension boundary strata. The proof of the following result is a simpler version of \cite[Section 7]{Lar21} (cf. also \cite[Section 8]{PS24} and \cite[Section 6]{PS25c}).

\begin{prop}
There is an extension of $\ind D\Xi^{ab}_r$ to $\bbM^r(a,b)$ whose restriction to the interior of a codimension 1 boundary stratum is of the form 
    \begin{equation}
    \ind D\Xi^{ab}_r\vert_{\partial_c}=\ind D\Xi^{ac}_r\oplus\ind D\Xi^{cb}_r.
    \end{equation}
The natural associativity diagram,
    \begin{equation}
    \begin{tikzcd}
    & & \underline{\bbR}\arrow[d] \\
    \underline{\bbR}\arrow[d,"\Delta"]\arrow[r] & \ind D\Xi^{ab}_r\vert_{\partial_c}\arrow[r]\arrow[d,equals] & T\bbM^r(a,b)\vert_{\partial_c}\arrow[d] \\
    \underline{\bbR}^2\arrow[r] & \ind D\Xi^{ac}_r\oplus\ind D\Xi^{cb}_r\arrow[r] & T\bbM^r(a,c)\oplus T\bbM^r(c,b),
    \end{tikzcd}
    \end{equation}
commutes. Moreover, the natural associativity diagram associated to higher codimension boundary strata commute.
\end{prop}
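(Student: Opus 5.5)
The plan is to build the extension of $\ind D\Xi^{ab}_r$ by induction on the depth of corner strata, using linear gluing. On the interior we use the index bundle $\ind D\Xi^{ab}_r$ of the linearized operators on $\widetilde{\scrM}^r(a,b)$. This bundle is $\bbR$-invariant, since time-shift conjugates the operators, so it descends to $\scrM^r(a,b)$. Near a codimension 1 stratum $\partial_c$, a point of $\bbM^r(a,b)$ is a glued trajectory $\gamma_1\#_R\gamma_2$ with gluing length $R\gg0$. There we compare $D(\Xi_r)_{\gamma_1\#_R\gamma_2}$ with the pair $\big(D(\Xi_r)_{\gamma_1},D(\Xi_r)_{\gamma_2}\big)$. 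The standard linear gluing theorem for Fredholm operators on $\bbR$ applies because the asymptotic operator at $c$ is the Hessian of $f^r$ at $c$, which is nondegenerate. It gives, for $R$ large, a canonical isomorphism
\begin{equation}
\ind D(\Xi_r)_{\gamma_1\#_R\gamma_2}\cong\ind D(\Xi_r)_{\gamma_1}\oplus\ind D(\Xi_r)_{\gamma_2},
\end{equation}
built from pre-gluing cutoffs and the right inverses. Both sides are surjective under the Morse--Smale hypothesis, so the isomorphism is simply the linearization of the nonlinear gluing map. We then use it to define the extension as the glued vector bundle. Over the collar $\partial_c\times[R_0,\infty]$, take $\ind D\Xi^{ac}_r\boxplus\ind D\Xi^{cb}_r$, pulled back along the collar projection, and identify it with the interior index bundle for finite $R$ via the gluing isomorphism. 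Continuity as $R\to\infty$ holds by construction, and the restriction to $\partial_c$ is exactly the direct sum, as required.

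Next we check the commuting diagram. The translational vector field $\partial_s\gamma$ of a glued trajectory is, up to exponentially small error in $R$, the sum of the pre-glued translation vectors $\partial_s\gamma_1+\partial_s\gamma_2$. This gives the diagonal map $\Delta$ in the first column. The quotient of $\ind D\Xi^{ab}_r$ by this $\underline{\bbR}$ is $T\bbM^r(a,b)$. The extra inward collar direction $\underline{\bbR}$ is the anti-diagonal, i.e., the change of gluing length, which is the translation of $\gamma_2$ relative to $\gamma_1$. So the right column is the collar short exact sequence. We then use the contractible choice of homotopy between the exact and the approximate splittings to make the diagram commute on the nose, possibly after shrinking the collar.

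For higher codimension strata, $\bbM^r(a,c_1)\times\cdots\times\bbM^r(c_k,b)$, we use iterated gluing with multiple gluing parameters $(R_1,\ldots,R_k)$. The needed associativity is that gluing at $c_1$ then $c_2$ agrees with the reverse order and with simultaneous gluing. This holds up to a canonical homotopy, since in each case the isomorphism is induced by a choice of approximate right inverse and such choices form a contractible space. We therefore construct the extensions inductively over strata of increasing codimension, starting from the deepest. At each stage we choose the identifications on the overlap of collars compatibly with those already fixed. The required choices always exist because the space of compatible choices is contractible, which is the argument of \cite[Section 7]{Lar21} specialized to the Morse setting.

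The main obstacle is this last coherence step: arranging a system of collars and gluing isomorphisms that are strictly compatible on all corner overlaps, rather than only up to homotopy. I would first try to use the Gromov--Hausdorff/Morse gluing charts of \cite{Weh12}, which already give compatible collars, and transport the Fredholm gluing isomorphisms along them. If that fails, I would fall back on a homotopy-coherent construction that straightens the identifications inductively using collared neighborhoods.
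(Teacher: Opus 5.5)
Your proposal is correct in outline and follows essentially the same route as the paper, which gives no argument of its own and defers to the linear-gluing construction of index bundles over compactified moduli spaces in \cite[Section 7]{Lar21}, specialized to the simpler Morse setting; that is exactly the source you fall back on for the corner coherence. One detail to handle when writing it out: the change-of-gluing-length vector $\partial_R$ degenerates at $\partial_c$ in any smooth collar coordinate (e.g. $\partial_R=-\rho\,\partial_\rho$ for $\rho=e^{-R}$), so you cannot literally identify the anti-diagonal with $\partial_R$; to keep $\underline{\bbR}\to\ind D\Xi^{ab}_r\to T\bbM^r(a,b)$ exact on the boundary, rescale the normal component of the quotient map near $\partial_c$, which changes it only by a homotopy through isomorphisms in the interior.
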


Let $\bbM^r$ be the unstructured flow category with objects $\crit(f^r)$ and morphism spaces $\bbM^r(a,b)$; the composition map is gluing of Morse trajectories. In fact, there is a standard way to construct stable framings on the Morse moduli spaces, 
    \begin{equation}\label{eqn:morseframing}
    T\bbM^r(a,b)+\underline{\bbR}^{I(b)}+\underline{\bbR}\cong\underline{\bbR}^{I(a)},
    \end{equation}
which are compatible with gluing.

\begin{lem}[Proposition 4.9 in \cite{Bla24}]
$\bbM^r$ lifts to a framed flow category $\bbM^{\bbS,r}$ satisfying
    \begin{equation}
    \frakM^{\bbS,r}\equiv\flow^{\fr}(\unit,\bbM^{\bbS,r})\simeq\Sigma^\infty_+\calL^rQ.
    \end{equation}
\end{lem}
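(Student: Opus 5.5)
The plan is to deduce the lemma from the general fact that, for a Morse--Smale pair on a compact manifold with corners whose gradient points strictly outwards along the boundary, the framed Morse flow category computes the suspension spectrum of that manifold. Concretely, I would proceed in three steps: (1) build the framed structure on $\bbM^r$ from the stable framings \eqref{eqn:morseframing}; (2) identify $\flow^{\fr}(\unit,\bbM^{\bbS,r})$ with the spectrum obtained by attaching stable cells indexed by $\crit(f^r)$, using the equivalence $\flow^{\fr}\cong{\rm Sp}$; (3) compare this cell structure with the Morse cell decomposition of $\calL^rQ$.

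For step (1), I take $V_a=\bbR^{I(a)}$, $W(a,b)=0$ and $I(a,b)$ trivial of rank zero. The equivalence in the definition of a framed flow category then reads
\begin{equation*}
T\bbM^r(a,b)+\underline{\bbR}^{I(b)}\cong\underline{\bbR}+\underline{\bbR}^{I(a)},
\end{equation*}
up to moving the translational $\underline{\bbR}$ across, which is \eqref{eqn:morseframing}. These framings are produced as follows. Choose, coherently for all critical points, framings of the unstable bundles $E^u_a\cong\bbR^{I(a)}$. The index bundle $\ind D\Xi^{ab}_r$ is trivialized by the linear gluing $\ind D\Xi^{ab}_r\oplus E^u_b\cong E^u_a$, which holds because $\grad f^r$ is outward pointing and trajectories stay in the interior. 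Combining this with the short exact sequence $0\to\underline{\bbR}\to\ind D\Xi^{ab}_r\to T\bbM^r(a,b)\to0$ and with the preceding proposition on extending the index bundle over $\bbM^r(a,b)$ and its compatibility with breaking gives framings compatible with the collar sequences. Since all spaces are contractible choices of splittings, the associativity diagrams commute coherently.

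For step (2), recall that under $\flow^{\fr}\cong{\rm Sp}$, the mapping spectrum out of $\unit$ is filtered by the action (Morse value) filtration. Its associated graded pieces are $\bigvee_{f^r(a)=c}\bbS^{I(a)}$, and the attaching maps are given by the Pontryagin--Thom classes of the framed manifolds $\bbM^r(a,b)$. This is the Cohen--Jones--Segal description, and it is the content of the cited construction \cite{Bla24} in the AB24 formalism.

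For step (3), Morse theory for $f^r$ on $\calL^rQ$ (with outward gradient, so that sublevel sets change only at critical points) gives a CW-type filtration $\calL^rQ_{\le c}$ with cofibers $\bigvee S^{I(a)}$ after suspension. The attaching maps are again computed by the framed moduli spaces of flow lines, via the Franks/Cohen--Jones--Segal argument on the compactified unstable manifolds. Matching the filtrations inductively yields $\frakM^{\bbS,r}\simeq\Sigma^\infty_+\calL^rQ$.

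I expect the main obstacle to be the coherent identification in step (3): showing that the attaching maps agree as maps of spectra, not merely in degree, across all filtration levels. I would handle this by constructing a single comparison map. Evaluation at the start point gives a framed bordism map from the compactified unstable manifolds $\overline{W^u(a)}\to\calL^rQ$, which assemble into a framed flow module (a morphism $\unit\to$ the Morse category in the appropriate sense). This yields a map $\frakM^{\bbS,r}\to\Sigma^\infty_+\calL^rQ$ that is filtered by construction, and I would then check that it is an equivalence on associated gradeds, where it is the identity on spheres.
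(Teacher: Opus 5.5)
Your step (1) matches the paper, which takes the standard stable framings \eqref{eqn:morseframing}, built from the unstable manifolds, as the framed lift. For the identification $\frakM^{\bbS,r}\simeq\Sigma^\infty_+\calL^rQ$ the paper gives no argument; it simply invokes \cite[Proposition 4.9]{Bla24}. So your steps (2)--(3) are a self-contained substitute for that citation, not a reconstruction of reasoning in the text.

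Your route is sound. The compactified unstable manifolds $\overline{W^u(a)}$ have boundary $\coprod_b\bbM^r(a,b)\times\overline{W^u(b)}$, and they lie in the interior because $-\grad f^r$ points strictly inward. They are framed by $T\overline{W^u(a)}\cong\underline{\bbR}^{I(a)}$, and they evaluate into $\{f^r\le f^r(a)\}$. The resulting map therefore respects the sublevel-set filtration and is the unstable-cell fundamental class on each graded piece.

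What this route costs is one input the paper never recalls: a parametrized Pontryagin--Thom statement. It says that a framed flow bimodule to $\unit$, equipped with boundary-compatible maps to a space $X$, induces a map of spectra to $\Sigma^\infty_+X$ in the AB24 formalism. This is where the real content sits, so state it explicitly as an input. Once you have the map, you can skip the associated-graded check. Instead, compare integral homology (Morse homology of $f^r$ versus singular homology of $\calL^rQ$) and apply the bounded-below Whitehead theorem, as the paper does for its directed systems.

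Two slips to fix:
\begin{itemize}
\item The family of unstable manifolds is a bimodule $\bbM^{\bbS,r}\to\unit$, not $\unit\to\bbM^{\bbS,r}$. The latter would be built from stable manifolds, which are framed only after twisting by $-T\calL^rQ$, and it would induce a map into $\frakM^{\bbS,r}$ rather than out of it.
\item Your displayed framing should read $T\bbM^r(a,b)+\underline{\bbR}+\underline{\bbR}^{I(b)}\cong\underline{\bbR}^{I(a)}$. As written, the two sides have ranks $I(a)-1$ and $I(a)+1$, though your own derivation via $\ind D\Xi^{ab}_r$ gives the correct version.
\end{itemize}
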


Now, if $\Phi:\calL^rQ\to{\rm BGL}_1(\bbS)$ is any stable spherical fibration, then we may twist both sides of \eqref{eqn:morseframing},
    \begin{equation}
    T\bbM^r(a,b)+\underline{\bbR}^{I(b)}+\underline{\Phi}\vert_b+\underline{\bbR}\cong I(a,b)+\underline{\bbR}^{I(a)}+\underline{\Phi}\vert_a,
    \end{equation}
where: we abuse notation to think of $\underline{\Phi}\vert_a$ and $\underline{\Phi}\vert_b$ as virtual vector spaces of virtual rank 0, and $I(a,b)$ is a virtual bundle of virtual rank 0 which is trivial as a stable spherical fibration; to obtain a new spherically framed flow category $\bbM^{{\rm sfr},r,\Phi}$.

\begin{lem}
For $\Phi:\calL^rQ\to{\rm BGL}_1(\bbS)$, we have that $\bbM^{{\rm sfr},r,\Phi}$ satisfies
    \begin{equation}
    \frakM^{{\rm sfr},r,\Phi}\equiv\flow^{\rm sfr}(\unit,\bbM^{{\rm sfr},r,\Phi})\simeq(\calL^rQ)^{\Phi}\otimes_\bbS\frakR^{\rm sfr}.
    \end{equation}
\end{lem}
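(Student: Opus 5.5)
The plan is to reduce the statement to the framed case (the unlabeled lemma from \cite{Bla24} quoted above, giving $\frakM^{\bbS,r}\simeq\Sigma^\infty_+\calL^rQ$) by a fiberwise/parametrized argument. We prove that the twisting by $\Phi$ on the flow-category side corresponds to the Thom twist on the homotopy side. The key observation is that $\bbM^{{\rm sfr},r,\Phi}$ has the same underlying unstructured flow category $\bbM^r$ as $\bbM^{\bbS,r}$. Only the tangential structure is modified: each object $a\in\crit(f^r)$ carries the extra rank-$0$ ``virtual vector space'' $\underline{\Phi}\vert_a$, and each morphism space carries the correction $I(a,b)$. The correction $I(a,b)$ is trivialised as a spherical fibration via parallel transport of $\Phi$ along broken Morse trajectories.

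First, we reformulate $\flow^{\rm sfr}(\unit,-)$ in terms of the Pontryagin--Thom / cellular description used in \cite{AB24}. For a spherically framed flow category, the mapping spectrum is computed by a ``CW-spectrum'' with one cell per object $a$. Cell $a$ is $\Sigma^{V_a}\frakR^{\rm sfr}$, now of the form $\Sigma^{I(a)}\Phi\vert_a\otimes\frakR^{\rm sfr}$. The attaching maps are given by the spherically framed bordism classes of the $\bbM^r(a,b)$, with framings as in \eqref{eqn:morseframing} twisted by $\Phi$.

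Second, on the homotopy side, we use the unstable/stable Morse decomposition of $\calL^rQ$. Since $\grad f^r$ points outward, the descending cells give a cell structure whose cells are $D^{I(a)}$ with attaching maps encoded by $\bbM^r$; this is exactly the input of the framed lemma. Restricting $\Phi$ to the (contractible) descending cell of $a$ trivialises it as $\Phi\vert_a$. The Thom spectrum $(\calL^rQ)^\Phi$ then inherits a filtration with subquotients $\Sigma^{I(a)}\Phi\vert_a$. Its attaching maps are the framed ones, twisted by the parallel-transport identifications $\Phi\vert_a\simeq\Phi\vert_b$ along the trajectories; these are exactly the trivialisations of $I(a,b)$ chosen above.

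Third, we match the two filtered objects after tensoring with $\frakR^{\rm sfr}$. Practically, we run the proof of \cite[Proposition 4.9]{Bla24} verbatim in the parametrized setting. Concretely, this means building a flow bimodule from $\unit$ to $\bbM^{{\rm sfr},r,\Phi}$ out of the compactified unstable manifolds $\overline{W}^u(a)$, each a manifold with corners. Each $\overline{W}^u(a)$ maps to $\calL^rQ$ and is therefore equipped with $\Phi$ pulled back. This bimodule is spherically framed by the twisted version of the framing, and the Pontryagin--Thom construction for it produces a map to $(\calL^rQ)^\Phi\otimes\frakR^{\rm sfr}$. That map is an equivalence on associated graded pieces, hence an equivalence.

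The main obstacle is coherence. The trivialisations of $I(a,b)$, and their compatibility with the gluing isomorphisms and higher corner strata, must agree with the homotopy-coherent structure of $\Phi:\calL^rQ\to{\rm BGL}_1(\bbS)$ restricted along the evaluation maps $\bbM^r(a,b)\to$ paths in $\calL^rQ$. We would handle this by choosing $\Phi$ as a genuine map to a simplicial/topological model of ${\rm BGL}_1(\bbS)$ with parallel transport along paths. The trivialisations would be defined uniformly by transport along the (broken) trajectory, so that associativity holds strictly on the nose by concatenation of paths.
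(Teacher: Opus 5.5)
The paper gives no separate proof of this lemma. It is simply asserted as the $\Phi$-twisted analogue of the framed Morse lemma \cite{Bla24}, and your plan is exactly that intended argument: the same underlying $\bbM^r$, the twisted structure obtained by transporting $\Phi$ along broken trajectories, and a comparison with the Morse cell filtration of $(\calL^rQ)^\Phi$ on associated graded pieces.

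One correction: the compactified unstable manifolds, with boundary $\bigcup_b\bbM^r(a,b)\times\overline{W}^u(b)$, form a bimodule $\bbM^{{\rm sfr},r,\Phi}\to\unit$ parametrized over $\calL^rQ$, not a bimodule $\unit\to\bbM^{{\rm sfr},r,\Phi}$ (that would only give an element of $\frakM^{{\rm sfr},r,\Phi}$). Your filtered comparison map $\frakM^{{\rm sfr},r,\Phi}\to(\calL^rQ)^\Phi\otimes_\bbS\frakR^{\rm sfr}$ comes from post-composition with this bimodule, with trajectories parametrized by $f^r$-value so that transport of $\Phi$ concatenates strictly.
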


In fact, if $\Phi:\calL^rQ\to\bbZ\times{\rm BO}$ is simply a virtual bundle, then we may twist both sides of \eqref{eqn:morseframing},
    \begin{equation}
    T\bbM^r(a,b)+\underline{\bbR}^{I(b)}+\underline{\Phi}\vert_b+\underline{\bbR}\cong\underline{\bbR}^{I(a)}+\underline{\Phi}\vert_a,
    \end{equation}
to obtain a new framed flow category $\bbM^{\bbS,r,\Phi}$.

\begin{lem}
For $\Phi:\calL^rQ\to\bbZ\times{\rm BO}$, we have that $\bbM^{\bbS,r,\Phi}$ satisfies
    \begin{equation}
    \frakM^{\bbS,r,\Phi}\equiv\flow^{\fr}(\unit,\bbM^{\bbS,r,\Phi})\simeq(\calL^rQ)^{\Phi}.
    \end{equation}
\end{lem}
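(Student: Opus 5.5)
The plan is to reduce to the untwisted case, Proposition 4.9 of \cite{Bla24} (the framed-flow-category lemma above), by a Thom-isomorphism argument carried out inside the Morse flow category. The twisted framing
\[
T\bbM^r(a,b)+\underline{\bbR}^{I(b)}+\underline{\Phi}\vert_b+\underline{\bbR}\cong\underline{\bbR}^{I(a)}+\underline{\Phi}\vert_a
\]
is the same as the framing of the Morse flow category of the manifold with corners given by the total space of the vector bundle underlying $\Phi$. Concretely, I would first stabilize so that $\Phi=E-\underline{\bbR}^N$ with $E\to\calL^rQ$ an honest vector bundle of rank $m$; shifting by $\underline{\bbR}^N$ only desuspends both sides, since $\flow^\fr\simeq\spectra$ is compatible with suspension. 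Then I would realize $(\calL^rQ)^E$ as $D(E)/S(E)$. On $D(E)$ I would take the Morse function $\tilde f=f^r\circ p-\abs{v}^2$ (suitably cut off), so that its gradient points inward along $S(E)$ and outward along $p^{-1}(\partial\calL^rQ)$. Its critical points are the zero-section copies of $\crit(f^r)$, with indices shifted by $m$, and its trajectory spaces agree with $\bbM^r(a,b)$ because the fibre direction is a nondegenerate maximum along the whole zero section.

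The key step is to identify the standard framings \eqref{eqn:morseframing} for $\tilde f$ with the twisted framings above. The unstable manifold of a critical point $\tilde a$ is $W^u(a)\oplus E\vert_a$. Hence the framing data assigns to $\tilde a$ the vector space $\underline{\bbR}^{I(a)}\oplus E\vert_a$, and the linearized operators split as the operators for $f^r$ plus a constant-coefficient invertible operator on $E$ along the trajectory. Transporting $E$ along a trajectory identifies $E\vert_a$ with $E\vert_b$ in a way compatible with gluing, and this yields exactly the $\underline{\Phi}\vert_a,\underline{\Phi}\vert_b$ terms. I expect this comparison of coherent framings, including the associativity over all corner strata, to be the main obstacle. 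I would treat it by a homotopy through Fredholm families on the extended index bundles from the Proposition on the extension of $\ind D\Xi^{ab}_r$, using the fact that the space of choices involved is contractible.

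Finally, I would apply the relative version of Proposition 4.9 of \cite{Bla24} for Morse functions that point inward on part of the boundary. This gives $\flow^\fr(\unit,\bbM^{\bbS,r,\Phi})\simeq\Sigma^\infty(D(E)/S(E))\simeq(\calL^rQ)^E$, and desuspending by $N$ gives $(\calL^rQ)^\Phi$. As a sanity check that avoids the relative version, one can instead compare via the natural map of flow categories induced by the framing, check on associated graded pieces (cells indexed by critical points), and conclude using the stable cell-filtration. Here each cell contributes $\bbS^{I(a)}\otimes\Phi\vert_a$ on both sides.
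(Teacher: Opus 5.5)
The paper states this lemma without proof. It is recorded as the twisted counterpart of the untwisted lemma quoted from Proposition~4.9 of \cite{Bla24}, so the implicit argument is to rerun that comparison with $\underline{\Phi}\vert_a$ carried along in each $V_a$.

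Your route is genuinely different and sound in outline: you trade the twist for a boundary condition. Choose a metric on $D(E)$ built from a metric connection (Sasaki type). Then the flow of $\tilde f=f^r\circ p-\abs{v}^2$ decouples, and:
\begin{itemize}
\item trajectories between critical points lie in the zero section;
\item the linearized operators split off the invertible operator $\xi\mapsto\nabla_s\xi-2\xi$ on $\gamma^*E$;
\item $T_{\tilde a}W^u(\tilde a)=T_aW^u(a)\oplus E\vert_a$;
\item trivializing the $E$-summands by parallel transport along flow lines makes the standard framing of $\tilde f$ literally the twisted framing, compatibly with breaking.
\end{itemize}
So the step you flag as the main obstacle is fairly mild.

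What your approach buys is conceptual: the twisted attaching maps are exhibited as untwisted ones on another manifold, and the untwisted theorem is used as a black box. What it costs is that the black box you need is not the one the paper quotes. That result is absolute: the gradient points outward along all of $\partial\calL^rQ$, and the realization is $\Sigma^\infty_+$. You need a relative version realizing $\Sigma^\infty\big(D(E)/S(E)\big)$, for a gradient that is inward along the exit face $S(E)$ and outward along $p^{-1}(\partial\calL^rQ)$, with the two faces meeting in a corner. This is standard Conley-index-type material, but it is an additional input you must prove or cite.

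Alternatively, you can stay in the absolute setting by working on the fibrewise compactification $S(E\oplus\underline{\bbR})$. There the critical points on the section at infinity span a full subcategory closed under outgoing morphisms, realizing $\Sigma^\infty_+\calL^rQ$. The zero-section objects give exactly your twisted category as the quotient, and the Thom spectrum appears as a cofibre. The price is that you need naturality of the comparison under that inclusion.

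Your closing ``sanity check'' should not be relied on. The associated graded pieces $\bbS^{I(a)}\otimes\Phi\vert_a$ are the same for $\Phi$ and for a trivial virtual bundle of the same rank, so the twist lives entirely in the attaching maps. A cell-by-cell comparison therefore proves nothing until you already have a filtered map between $\flow^{\fr}(\unit,\bbM^{\bbS,r,\Phi})$ and $(\calL^rQ)^{\Phi}$, and producing that map is the content of the lemma.
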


Of course, we may forget the framed flow category $\bbM^{\bbS,r}$ down to a complex-oriented flow category $\bbM^{\rm MU,r}$ satisfying 
    \begin{equation}
    \frakM^{{\rm MU},r}\equiv\flow^{{\rm cx}}(\unit,\bbM^{{\rm MU},r})\simeq\Sigma^\infty_+\calL^rQ\otimes_\bbS{\rm MU}.
    \end{equation}
    
Moreover, if $\Phi:\calL^rQ\to{\rm BGL}_1({\rm MU})$ is any MU-local system, we may twist both sides of \eqref{eqn:morseframing},
    \begin{equation}
    T\bbM^r(a,b)+\underline{\bbR}^{I(b)}+\underline{\Phi}\vert_b+\underline{\bbR}\cong I(a,b)+\underline{\bbR}^{I(a)}+\underline{\Phi}\vert_a,
    \end{equation}
where $I(a,b)$ is a virtual bundle of virtual rank 0 which is trivial as an MU-local system, to obtain a new spherically complex-oriented flow category $\bbM^{{\rm scx},r,\Phi}$.

\begin{lem}
For $\Phi:\calL^rQ\to{\rm BGL}_1({\rm MU})$, we have that $\bbM^{{\rm scx},r,\Phi}$ satisfies
    \begin{equation}
    \frakM^{{\rm scx},r,\Phi}\equiv\flow^{\rm scx}(\unit,\bbM^{{\rm scx},r,\Phi})\simeq(\calL^rQ)^{\Phi}\otimes_{\rm MU}\frakR^{\rm scx}.
    \end{equation}
\end{lem}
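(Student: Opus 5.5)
We reduce the statement to the untwisted framed case, namely the lemma citing \cite[Proposition 4.9]{Bla24}, together with the standard fact that a flow category presents a colimit over cells. Write $F\equiv\flow^{\rm scx}(\unit,-)$. The framed flow category $\bbM^{\bbS,r}$ is assembled from cells indexed by $\crit(f^r)$, glued along the framed moduli spaces $\bbM^r(a,b)$; this is the presentation of $\Sigma^\infty_+\calL^rQ$ by its Morse CW structure.

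First, fix the Morse data so that the strategy is concrete. Since $\grad f^r$ points outward along $\partial\calL^rQ$, unstable manifolds give a cell decomposition of $\calL^rQ$ up to homotopy. Choose contractible neighbourhoods $U_a$ of the closures of the unstable cells $W^u(a)$ in the partial order. Trivialise $\Phi\vert_{U_a}$ by $\Phi\vert_a$, using the contractibility of the unstable disks. Then, over the evaluation map $\bbM^r(a,b)\to\calL^rQ$, there are two trivialisations of $\Phi$ over each broken trajectory, one coming from $a$ and one from $b$. Their discrepancy is a map $\bbM^r(a,b)\to{\rm GL}_1({\rm MU})$. This is exactly the datum of $I(a,b)$ as a rank-$0$ virtual bundle trivialised after $\otimes{\rm MU}$, and it is compatible with gluing by the cocycle property of the discrepancies. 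This describes the construction of $\bbM^{{\rm scx},r,\Phi}$ concretely.

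Second, compare the flow category with the Thom spectrum cell by cell. Filter $\bbM^{{\rm scx},r,\Phi}$ by the value of $f^r$; each subquotient is a single object. Dually, filter $(\calL^rQ)^\Phi\otimes_{\rm MU}\frakR^{\rm scx}$ by the sublevel sets $\{f^r\le c\}$. Across a critical point $a$, the cofiber of the Thom spectrum is $(D^{I(a)},S^{I(a)-1})^{\Phi\vert_a}\simeq\Sigma^{I(a)}\Phi\vert_a$, base-changed to $\frakR^{\rm scx}$. This matches $F$ applied to the one-object subquotient with $V_a=\underline{\bbR}^{I(a)}+\Phi\vert_a$. Because both sides are built by iterated cofibers, it suffices to identify the attaching maps. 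The attaching map in $F$ is the Pontryagin--Thom class of $\bbM^r(a,b)$ with its twisted spherical complex orientation. The attaching map in the Thom spectrum is the Thom-twisted version of the Morse attaching map. In the untwisted case these agree by \cite[Proposition 4.9]{Bla24}; in the twisted case the twist enters on both sides only through the clutching discrepancy from the first step.

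The cleanest way to package the second step is to exhibit a natural equivalence on the $\infty$-categorical level. Consider the flow category $\bbM^{\bbS,r}$ parametrised over $\calL^rQ$ by evaluation at the start of the trajectory. Twisting by $\Phi$ corresponds to tensoring the parametrised spectrum with the local system $\Phi$ before taking the colimit over $\calL^rQ$. The Thom spectrum is by definition the colimit of $\Phi$. The framed lemma identifies the colimit of the untwisted parametrised object with $\Sigma^\infty_+\calL^rQ$, that is, with the colimit of the constant local system. This is natural in the local system, so it also applies to $\Phi$.

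The main obstacle is this naturality. One must show that the equivalence of \cite{Bla24} is really an equivalence of functors ${\rm Fun}(\calL^rQ,{\rm Mod}_{\frakR^{\rm scx}})\to{\rm Mod}_{\frakR^{\rm scx}}$. Equivalently, the trivialisations chosen in the first step must assemble coherently over the higher corner strata. I would first try to prove this by checking it on the generators given by local systems pushed forward from points. By linearity and the fact that both functors preserve colimits, this reduces the claim to the untwisted lemma.
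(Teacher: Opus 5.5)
The paper gives no separate proof of this lemma. It is treated as the twisted counterpart of the framed statement cited from \cite[Proposition 4.9]{Bla24}: the untwisted comparison is rerun with $\Phi$ carried along. Your proposal never actually carries out such a comparison, and the substitute you offer has a genuine gap.

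\textbf{The naturality reduction fails.} The decisive step is the ``naturality in the local system'', which you yourself call the main obstacle. The assignment $\Phi\mapsto\flow^{\rm scx}(\unit,\bbM^{{\rm scx},r,\Phi})$ is only defined for rank-one $\Phi:\calL^rQ\to{\rm BGL}_1({\rm MU})$, because the twist enters only through rank-zero virtual bundles $I(a,b)$ with ${\rm MU}$-trivialisations. So it is not a functor on ${\rm Fun}(\calL^rQ,{\rm Mod}_{\frakR^{\rm scx}})$. The generators $p_!\frakR^{\rm scx}$ are not invertible: their fibres are $\Sigma^\infty_+$ of path spaces tensored with $\frakR^{\rm scx}$, and they vanish on other components. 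Hence there is no twisted flow category to evaluate on them, and ``both functors preserve colimits'' has no content on the flow side. Even if such an extension existed, you would first need a natural transformation between the two sides, which is the whole content of the lemma. Moreover, the value at $p_!\frakR^{\rm scx}$ is not the untwisted lemma, which concerns the constant system.

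\textbf{The cell-by-cell step has the same problem.} You match one-object subquotients and assert that the twisted attaching maps agree because ``the twist enters only through the clutching discrepancy''. That does not identify iterated extensions: you need a comparison map at every sublevel stage, and none is constructed.

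\textbf{The geometric input of your first step is also off.} Closures of unstable cells in $\calL^rQ$ are subcomplexes and need not be contractible; a $1$-cell with both ends on one $0$-cell already closes up to a circle. There is also no meaningful evaluation map $\bbM^r(a,b)\to\calL^rQ$: ``evaluation at the start of the trajectory'' is constant at $a$.

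\textbf{How to repair it.} Use the abstract compactified unstable manifold $\overline{W}^u(a)$, consisting of broken flow lines out of $a$. It is a closed disk with codimension-one faces $\bbM^r(a,b)\times\overline{W}^u(b)$, and its evaluation map to $\calL^rQ$ presents the Morse CW structure. Since $\overline{W}^u(a)$ is contractible, $\eval^*\Phi$ is canonically trivialised on it by $\Phi\vert_a$. On the face $\bbM^r(a,b)\times\overline{W}^u(b)$, the trivialisations coming from $a$ and from $b$ differ exactly by the datum $I(a,b)$ defining $\bbM^{{\rm scx},r,\Phi}$.

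Thom spectra commute with colimits of spaces over ${\rm BGL}_1({\rm MU})$. Hence $(\calL^rQ)^\Phi$ is assembled from the pieces $\Sigma^\infty_+\overline{W}^u(a)\otimes\Phi\vert_a$, glued along these faces with precisely those twists. A comparison built from the $\overline{W}^u(a)$, viewed as a flow module over $\bbM^r$ with evaluation to $\calL^rQ$, therefore twists verbatim. With it, your sublevel-set induction acquires an honest comparison map at each stage, and no naturality in $\Phi$ is needed.
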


\subsubsection{Flow bimodules and the directed system}
Given $a\in\crit(f^r)$, we denote by 
    \begin{equation}
    W^u(a;f^r)\;\;{\rm resp.}\;\;W^s(a;f^r)
    \end{equation}
the unstable resp. stable manifold of $a$. We may consider the intersection 
    \begin{equation}\label{eqn:aux1}
    i_{r,r+1}\big(W^u(a;f^r)\big)\cap W^s(b;f^{r+1})
    \end{equation}
which, for generic data, is a smooth manifold of dimension $I(a)-I(b)$. In fact, \eqref{eqn:aux1} admits a Gromov-compactification $\bbI_{r,r+1}(a,b)$, given by allowing breakings at critical points, which, for generic data, is a compact smooth manifold with corners, stratified by breakings at critical points, whose codimension 1 boundary strata are enumerated by gluing maps of the form
    \begin{align}
    \bbM^r(a,a')\times\bbI_{r,r+1}(a',b)&\hookrightarrow\bbI_{r,r+1}(a,b), \\
    \bbI_{r,r+1}(a,b')\times\bbM^{r+1}(b',b)&\hookrightarrow\bbI_{r,r+1}(a,b).
    \end{align}
    
By looking at the various short exact sequences of the form
    \begin{equation}
    0\to T\Big(i_{r,r+1}\big(W^u(a;f^r)\big)\cap W^s(b;f^{r+1})\Big)\to TW^u(a;f^r)\to TW^u(b;f^{r+1})\to0,
    \end{equation}
we have the standard stable framings 
    \begin{equation}\label{eqn:aux2*}
    T\bbI_{r,r+1}(a,b)+\underline{\bbR}^{I(b)}\cong\underline{\bbR}^{I(a)}
    \end{equation}
which are compatible with gluing. In particular, we have constructed a framed flow bimodule 
    \begin{equation}
    \bbI_{r,r+1}:\bbM^{\bbS,r}\to\bbM^{\bbS,r+1},
    \end{equation}
hence a map 
    \begin{equation}
    \bbI_{r,r+1}:\frakM^{\bbS,r}\to\frakM^{\bbS,r+1}
    \end{equation}
which agrees, on integral homology, with the map induced on integral homology by $i_{r,r+1}$. Since every spectrum in sight is bounded below, we may use a spectral Whitehead theorem for $H\bbZ$-module spectra together with \cite[Proposition 3.3.13 \& (3.3.47)]{Abo14} to obtain the following result.

\begin{prop}
We have the following homotopy equivalences:
    \begin{equation}
    \varinjlim_r\frakM^{\bbS,r}\simeq\varinjlim_r\Sigma^\infty_+\calL^rQ\simeq\Sigma^\infty_+\calL Q.
    \end{equation}
\end{prop}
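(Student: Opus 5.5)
We will show that the directed system $\{\frakM^{\bbS,r},\bbI_{r,r+1}\}_r$ is equivalent, as a diagram in $\spectra$, to the directed system $\{\Sigma^\infty_+\calL^rQ,\Sigma^\infty_+ i_{r,r+1}\}_r$. The right-hand equivalence then follows from \eqref{eqn:fda}, because $\Sigma^\infty_+$ commutes with colimits. The first step is to fix, for each $r$, the equivalence $\frakM^{\bbS,r}\simeq\Sigma^\infty_+\calL^rQ$ provided by Proposition 4.9 of \cite{Bla24}. We will use its concrete form: it is the equivalence coming from the Morse cell decomposition of $\calL^rQ$. Here we use that $\grad f^r$ points outward, so the unstable manifolds $W^u(a;f^r)$ with their compactifications form a CW-like filtration whose associated graded is a wedge of spheres $S^{I(a)}$. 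The attaching data of this filtration are recorded by the framed manifolds $\bbM^r(a,b)$ through the Pontryagin–Thom construction.

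The second step is to identify, under these equivalences, the map $\bbI_{r,r+1}$ with $\Sigma^\infty_+ i_{r,r+1}$ up to homotopy. We plan to avoid a direct geometric comparison, which would require a Morse-theoretic Pontryagin–Thom argument for continuation-type bimodules. Instead we argue as indicated before the statement. By construction, $\bbI_{r,r+1}$ induces on $H\bbZ$-homology the map $(i_{r,r+1})_*$, since the Morse chain-level map counts $i_{r,r+1}(W^u(a))\cap W^s(b)$. The comparison with singular homology is \cite[Proposition 3.3.13 \& (3.3.47)]{Abo14}. Passing to colimits gives a map
\[
\varinjlim_r\frakM^{\bbS,r}\to\Sigma^\infty_+\calL Q .
\]
To produce this map honestly, we will build compatible maps $\frakM^{\bbS,r}\to\Sigma^\infty_+\calL Q$ by composing the equivalence of the first step with $\Sigma^\infty_+({\rm geo})$. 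We then check that each square commutes up to a homotopy compatible in $r$. Both composites are realized by framed flow bimodules from $\bbM^{\bbS,r}$ to the ``constant'' flow category model of $\Sigma^\infty_+$ of the target. A cobordism of bimodules is furnished by the one-parameter family of intersections $i_{r,r+1}(W^u(a;f^r))\cap W^s(b;f^{r+1}_t)$, with the standard framings \eqref{eqn:aux2*} extended over the family.

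The third step uses the fact that $\varinjlim_r\frakM^{\bbS,r}$ and $\Sigma^\infty_+\calL Q$ are connective, hence bounded below. The map between them induces an isomorphism on $H\bbZ$-homology: homology commutes with sequential colimits, and levelwise the map agrees with Abouzaid's identification $\varinjlim_r H_*(\calL^rQ)\cong H_*(\calL Q)$. By the spectral Whitehead theorem (the Hurewicz theorem for bounded-below spectra), the map is an equivalence.

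The main obstacle is the second step: making the levelwise maps coherent enough to define a map out of the colimit. It is not enough to know each square commutes up to some homotopy; we need these homotopies assembled into a map of sequential diagrams. For sequential colimits, however, homotopy commutativity of each square suffices to get some map of colimits, since the telescope requires no higher coherences. The homology computation is insensitive to which homotopies are chosen. So we will first try simply choosing homotopies square by square, and invoke the flow-bimodule cobordism above only if an explicit representative is needed.
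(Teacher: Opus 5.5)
This is essentially the paper's argument: $\bbI_{r,r+1}$ induces $(i_{r,r+1})_*$ on integral homology, \cite[Proposition 3.3.13 \& (3.3.47)]{Abo14} identifies the resulting colimit with $H_*(\calL Q;\bbZ)$, and the Whitehead theorem for bounded-below spectra promotes this homology isomorphism to an equivalence. You are more explicit than the paper, which never says where the map out of $\varinjlim_r\frakM^{\bbS,r}$ comes from. The one point to tighten is your closing remark: the homotopies in your squares cannot be extracted from the homology agreement, since maps of spectra agreeing on $H\bbZ$-homology need not be homotopic, so the geometric comparison is mandatory rather than optional. A natural choice is the family ${\rm geo}\circ\phi^{r+1}_s\circ i_{r,r+1}$ on $\overline{W^u(a;f^r)}$, $s\in[0,+\infty]$, which breaks at $s=+\infty$ into $\bbI_{r,r+1}(a,b)\times\overline{W^u(b;f^{r+1})}$, in the spirit of the paper's $\bbU^\bbI$.
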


By twisting both sides of \eqref{eqn:aux2*} by a stable spherical fibration (or virtual bundle, or MU-local system) on $\calL Q$, we obtain the following.

\begin{prop}
If $\Phi:\calL Q\to{\rm BGL}_1(\bbS)$, then
    \begin{equation}
    \varinjlim_r\frakM^{{\rm sfr},r,\Phi}\simeq\varinjlim_r\Big((\calL^rQ)^\Phi\otimes_\bbS\frakR^{\rm sfr}\big)\simeq(\calL Q)^\Phi\otimes_\bbS\frakR^{\rm sfr}.
    \end{equation}
\end{prop}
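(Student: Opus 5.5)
The argument will mirror the untwisted case stated just before it, carrying the twist by $\Phi$ along at each stage. There are two equivalences to prove: the first identifies the colimit of the Morse-theoretic spectra, and the second passes from the finite-dimensional approximations to $\calL Q$ itself.

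\emph{First equivalence.} Restrict $\Phi$ along ${\rm geo}:\calL^rQ\to\calL Q$ to get $\Phi_r\equiv\Phi\circ{\rm geo}$. Since ${\rm geo}\circ i_{r,r+1}={\rm geo}$, we have $\Phi_{r+1}\circ i_{r,r+1}=\Phi_r$ on the nose. The earlier lemma then gives $\frakM^{{\rm sfr},r,\Phi}\simeq(\calL^rQ)^{\Phi_r}\otimes_\bbS\frakR^{\rm sfr}$. Next, twist the framings \eqref{eqn:aux2*} of $\bbI_{r,r+1}(a,b)$ by adding $\underline{\Phi}\vert_a$ and $\underline{\Phi}\vert_b$. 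This is possible because points of $\bbI_{r,r+1}(a,b)$ are flow lines from $i_{r,r+1}(W^u(a))$ into $W^s(b)$, so along the contractible flow-line data the fibres of $\Phi$ at $a$ and $b$ are identified up to a rank $0$ virtual bundle that is trivial as a spherical fibration. The result is a spherically framed bimodule $\bbM^{{\rm sfr},r,\Phi}\to\bbM^{{\rm sfr},r+1,\Phi}$. I will check that it induces, under the lemma's identification, the map $(i_{r,r+1})_*:(\calL^rQ)^{\Phi_r}\otimes\frakR^{\rm sfr}\to(\calL^{r+1}Q)^{\Phi_{r+1}}\otimes\frakR^{\rm sfr}$. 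The identification comes from a Cohen--Jones--Segal type construction: the Thom spectrum is built from the unstable manifolds, with the twist recorded at critical points. So it suffices to compare the attaching data, which is exactly what the intersections \eqref{eqn:aux1} encode. The first equivalence then follows after taking the colimit.

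\emph{Second equivalence.} The functor $X\mapsto X^{\Phi\vert_X}$ is a colimit over $X$ of a diagram valued in ${\rm Mod}$, so it commutes with colimits of spaces over $\calL Q$. By \eqref{eqn:fda} we therefore get $\varinjlim_r(\calL^rQ)^{\Phi_r}\simeq(\calL Q)^\Phi$, and tensoring with $\frakR^{\rm sfr}$ preserves colimits. If one prefers to avoid a point-set argument about the colimit, there is an alternative: each connecting map is a map of bounded-below $\frakR^{\rm sfr}$-modules, so a Whitehead argument applies. Using the Thom isomorphism in homology with local coefficients, one checks after $-\otimes H\bbZ$ (twisted by the orientation character of $\Phi$) that the map is a homology isomorphism. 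This reduces to the untwisted statement \cite[Proposition 3.3.13]{Abo14} with local coefficients.

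\emph{Main obstacle.} The hard step is the compatibility in the first equivalence: showing that the twisted bimodule $\bbI_{r,r+1}$ realizes $(i_{r,r+1})_*$ on Thom spectra, coherently enough to form the directed system. Homology-level agreement is not enough by itself unless connectivity and the Whitehead argument are arranged. My first attempt would be to realize $\bbI_{r,r+1}$ as the Morse flow category of a single function on the mapping cylinder of $i_{r,r+1}$, twisted by the pulled-back $\Phi$. Then the bimodule is literally the inclusion of a subcomplex, and the lemma identifies it with the Thom spectrum map of that inclusion.
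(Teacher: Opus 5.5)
Your proposal is correct in outline, but it takes a somewhat different route from the paper. The first step is the same as the paper's. The paper's whole argument is to twist the framings \eqref{eqn:aux2*} on $\bbI_{r,r+1}(a,b)$ by $\Phi$ and rerun the untwisted argument. That is, it observes that the induced maps agree on integral homology with those induced by $i_{r,r+1}$, then invokes a Whitehead theorem for bounded-below spectra together with \cite[Proposition 3.3.13 \& (3.3.47)]{Abo14}. A small wording point: the identification of $\Phi$ at $a$ and $b$ comes from transport along the path that each point of $\bbI_{r,r+1}(a,b)$ determines, compatibly with breaking. It is these individual paths that are contractible, not the moduli space.

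You diverge from the paper in two ways.
\begin{itemize}
\item For the passage to $\calL Q$, you use that the Thom spectrum functor preserves colimits of spaces over $\calL Q$. With $\Phi_r=\Phi\circ{\rm geo}$, the equivalence $\varinjlim_r(\calL^rQ)^{\Phi_r}\simeq(\calL Q)^\Phi$ then follows formally from \eqref{eqn:fda}, with no homology computation or Whitehead theorem. Your fallback, using the Thom isomorphism with twisted coefficients, is essentially the paper's route.
\item The price is that you ask more of $\bbI_{r,r+1}$. You need it to realize $(i_{r,r+1})_*$ up to homotopy under the levelwise identifications $\frakM^{{\rm sfr},r,\Phi}\simeq(\calL^rQ)^{\Phi_r}\otimes_\bbS\frakR^{\rm sfr}$. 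You flag this step and only sketch it via a mapping-cylinder Morse function.
\end{itemize}

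Two remarks on that step. First, the diagrams are indexed by $\bbZ_{\geq0}$, so homotopy commutativity of each square already gives a map of diagrams; no higher coherence is needed. Second, this is less extra work than making explicit something the paper's brief argument takes for granted. A Whitehead argument must be applied to an actual comparison map. Building one out of $\varinjlim_r\frakM^{{\rm sfr},r,\Phi}$ from the levelwise identifications requires exactly these homotopy-commutative squares, and homology-level agreement of the connecting maps does not supply them.

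So the paper's route is shorter to state and leans on Abouzaid's homology computation. Yours isolates the one genuinely geometric input and makes everything else formal. That input is identifying the Morse bimodule of the embedding $i_{r,r+1}$ with the induced map on Thom spectra.
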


\begin{prop}
If $\Phi:\calL Q\to\bbZ\times{\rm BO}$, then
    \begin{equation}
    \varinjlim_r\frakM^{\bbS,r,\Phi}\simeq\varinjlim_r(\calL^rQ)^\Phi\simeq(\calL Q)^\Phi.
    \end{equation}
\end{prop}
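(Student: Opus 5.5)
The plan mirrors the untwisted case (the proposition $\varinjlim_r\frakM^{\bbS,r}\simeq\Sigma^\infty_+\calL Q$) and adds bookkeeping for the twist. Throughout, $\Phi:\calL Q\to\bbZ\times{\rm BO}$ is restricted along ${\rm geo}:\calL^rQ\to\calL Q$. Since ${\rm geo}$ is compatible with the inclusions $i_{r,r+1}$, the restrictions are compatible as well: $\Phi\vert_{\calL^rQ}=i_{r,r+1}^*\Phi\vert_{\calL^{r+1}Q}$.

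\textbf{Step 1: the maps.} Twist the framings \eqref{eqn:aux2*} of the continuation moduli spaces $\bbI_{r,r+1}(a,b)$ by $\underline{\Phi}\vert_b$ and $\underline{\Phi}\vert_a$. Concretely, take the equivalence
\begin{equation}
T\bbI_{r,r+1}(a,b)+\underline{\bbR}^{I(b)}+\underline{\Phi}\vert_b\cong\underline{\bbR}^{I(a)}+\underline{\Phi}\vert_a.
\end{equation}
To build it, trivialize $\Phi$ over the contractible space $\overline{W^u(a;f^r)}$ and over $\overline{W^s(b;f^{r+1})}$, and transport along the two half-trajectories. One then checks compatibility with gluing against the twisted framings of $\bbM^{\bbS,r,\Phi}$ and $\bbM^{\bbS,r+1,\Phi}$. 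This holds because all the identifications come from parallel transport of the single bundle $\Phi$ along concatenated paths. The result is a framed flow bimodule $\bbI^\Phi_{r,r+1}:\bbM^{\bbS,r,\Phi}\to\bbM^{\bbS,r+1,\Phi}$, and hence a directed system $\frakM^{\bbS,r,\Phi}\to\frakM^{\bbS,r+1,\Phi}$.

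\textbf{Step 2: identify the maps.} By the earlier lemma, $\frakM^{\bbS,r,\Phi}\simeq(\calL^rQ)^\Phi$. I will show that under these equivalences the bimodule map agrees with the Thom-spectrum map $(\calL^rQ)^\Phi\to(\calL^{r+1}Q)^\Phi$ induced by $i_{r,r+1}$. The cheapest route is the same as in the untwisted argument: compute on homology. After smashing with $H\bbZ$ (or with $H\bbZ$ twisted by the orientation character of $\Phi$), the Thom isomorphism identifies both sides with the twisted Morse complexes of $f^r$ and $f^{r+1}$. The bimodule map then becomes the standard continuation/pushforward map, which by \cite[Proposition 3.3.13]{Abo14} (applied with the local system $\det\Phi$) induces $(i_{r,r+1})_*$. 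This gives a map of directed systems up to homotopy on homology. The case $\Phi$ trivial is exactly the proposition above, so only the orientation twist is new.

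\textbf{Step 3: pass to the colimit.} $\calL^rQ$ is a filtered system whose colimit is $\calL Q$ by \eqref{eqn:fda}, and Thom spectra commute with colimits because the Thom construction is itself a colimit. Hence
\begin{equation}
\varinjlim_r(\calL^rQ)^\Phi\simeq(\calL Q)^\Phi.
\end{equation}
Comparing the two directed systems, each term is bounded below uniformly, since $\Phi$ has a fixed virtual rank. The comparison map $\varinjlim_r\frakM^{\bbS,r,\Phi}\to\varinjlim_r(\calL^rQ)^\Phi$ is therefore an $H\bbZ$-homology isomorphism between bounded-below spectra, so it is an equivalence by the spectral Whitehead theorem.

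\textbf{Main obstacle.} The delicate point is Step 2, upgrading a homology identification to a genuine map of directed systems in $\spectra$ rather than just termwise equivalences. To sidestep it, I would first use the homology computation to define the comparison map on the colimit directly, via the maps $\frakM^{\bbS,r,\Phi}\to(\calL^rQ)^\Phi\to(\calL Q)^\Phi$. Compatibility of these maps in $r$ then only needs to hold up to homotopy, and the Milnor $\lim^1$ issue does not arise for colimits. I would check that homotopy on homology via the twisted Thom isomorphism.
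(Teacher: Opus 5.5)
You follow the paper's argument: twist the framings \eqref{eqn:aux2*} on $\bbI_{r,r+1}(a,b)$ by $\Phi$, identify the induced maps on integral homology with $(i_{r,r+1})_*$ via \cite[Proposition 3.3.13 \& (3.3.47)]{Abo14}, and conclude with the $H\bbZ$-Whitehead theorem for bounded-below spectra. Your remark that the Thom construction commutes with colimits is a cleaner, spectrum-level substitute for (3.3.47).

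The step that fails as written is the ``sidestep'' in your last paragraph. Whitehead needs an actual map $\varinjlim_r\frakM^{\bbS,r,\Phi}\to(\calL Q)^\Phi$. To build it from the maps $g_r\colon\frakM^{\bbS,r,\Phi}\simeq(\calL^rQ)^\Phi\to(\calL Q)^\Phi$, you need homotopies $g_{r+1}\circ\bbI^\Phi_{r,r+1}\simeq g_r$ of maps of spectra. Checking that the two sides agree on homology does not produce such a homotopy, because maps of spectra are not detected by $H\bbZ$-homology: for instance, $\eta\colon\Sigma\bbS\to\bbS$ and $0$ induce the same map. What you actually need is that the equivalence $\frakM^{\bbS,r,\Phi}\simeq(\calL^rQ)^\Phi$ is natural for the Morse pushforward bimodule. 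That is, $\bbI^\Phi_{r,r+1}$ must be carried, up to homotopy of maps of spectra, to the Thom-spectrum map induced by $i_{r,r+1}$. With that, the squares commute in $\spectra$, the comparison map exists, and your homology computation then shows it is an equivalence. The paper also uses this naturality only tacitly (it too only asserts agreement on homology before invoking Whitehead). So the repair is to make that step explicit, not to change the approach.

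There are two smaller slips. First, closures such as $\overline{W^u(a;f^r)}$ are generally not contractible: on a circle with the height function, the closure of the index-one cell is the whole circle. So identify the twist by transport along the (broken) trajectories, as you go on to do, rather than by trivializing over closed cells. Second, $\Phi$ has constant virtual rank only on each component of $\calL Q$, and there may be infinitely many components. So ``uniformly bounded below'' should be replaced by splitting everything into summands indexed by $\pi_0\calL Q$ (all the constructions respect components) and applying Whitehead summandwise.
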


\begin{prop}
If $\Phi:\calL Q\to{\rm BGL}_1({\rm MU})$, then
    \begin{equation}
    \varinjlim_r\frakM^{{\rm scx},r,\Phi}\simeq\varinjlim_r\big((\calL^rQ)^\Phi\otimes_{\rm MU}\frakR^{\rm scx}\big)\simeq(\calL Q)^\Phi\otimes_{\rm MU}\frakR^{\rm scx}.
    \end{equation}
\end{prop}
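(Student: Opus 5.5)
We prove the final proposition: for $\Phi:\calL Q\to{\rm BGL}_1({\rm MU})$, $\varinjlim_r\frakM^{{\rm scx},r,\Phi}\simeq(\calL Q)^\Phi\otimes_{\rm MU}\frakR^{\rm scx}$. The plan mirrors the untwisted case. Restrict $\Phi$ along ${\rm geo}:\calL^rQ\to\calL Q$; this gives MU-local systems $\Phi_r$ that are compatible with the inclusions $i_{r,r+1}$, since ${\rm geo}\circ i_{r,r+1}={\rm geo}$.

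\textbf{Step 1: twisted bimodules.} Twist the framings \eqref{eqn:aux2*} of $\bbI_{r,r+1}(a,b)$ exactly as the Morse framings were twisted:
\begin{equation*}
T\bbI_{r,r+1}(a,b)+\underline{\bbR}^{I(b)}+\underline{\Phi}\vert_b\cong I(a,b)+\underline{\bbR}^{I(a)}+\underline{\Phi}\vert_a .
\end{equation*}
Here $I(a,b)$ is the rank-zero virtual bundle encoding parallel transport of $\Phi$ along the (contractible up to the compactification) trajectory spaces, with its induced MU-trivialization. Compatibility with gluing follows from the compatibility of the untwisted framings and of the parallel transport data for $\Phi$. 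This produces spherically complex-oriented flow bimodules $\bbM^{{\rm scx},r,\Phi}\to\bbM^{{\rm scx},r+1,\Phi}$, and hence maps $\frakM^{{\rm scx},r,\Phi}\to\frakM^{{\rm scx},r+1,\Phi}$.

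\textbf{Step 2: identification at each stage.} Show that under the equivalence $\frakM^{{\rm scx},r,\Phi}\simeq(\calL^rQ)^{\Phi}\otimes_{\rm MU}\frakR^{\rm scx}$ (the preceding lemma), these maps correspond to the maps of Thom spectra induced by $i_{r,r+1}$. I would argue this by naturality of the Morse-theoretic identification, as in the framed case: the bimodule is the Morse-theoretic model of the continuation/inclusion. If a direct identification is awkward, it suffices to check the map after base change to $H\bbZ$, where it reduces to the twisted-homology statement of \cite[Proposition 3.3.13]{Abo14}.

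\textbf{Step 3: passing to the colimit.} Take colimits. Thom spectra commute with colimits of base spaces, because $X\mapsto X^\Phi$ is a left Kan extension and so preserves colimits, and $-\otimes_{\rm MU}\frakR^{\rm scx}$ also preserves colimits. Combined with \eqref{eqn:fda}, this gives
\begin{equation*}
\varinjlim_r(\calL^rQ)^{\Phi_r}\otimes_{\rm MU}\frakR^{\rm scx}\simeq(\calL Q)^\Phi\otimes_{\rm MU}\frakR^{\rm scx}.
\end{equation*}
The homotopy colimit needs to be taken over a homotopy-coherent diagram. That diagram comes from compositions of bimodules, $\bbI_{r,r+1}\circ\bbI_{r+1,r+2}\simeq\bbI_{r,r+2}$, which are realized by flow $2$-simplices built from broken trajectories.

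\textbf{Main obstacle.} The main obstacle is Step 2, matching the bimodule maps with the geometric maps in the twisted setting. Here a homology computation does not immediately suffice: the spectra are not $H\bbZ$-modules, though they are bounded below. The fallback is a spectral Whitehead argument. Compare both colimit systems through a common map. Since everything is bounded below and connective after shifting, an equivalence after $\otimes H\bbZ$ implies an equivalence. After $\otimes H\bbZ$, the MU-twist becomes a $\bbZ$-local system (the orientation data), and the statement reduces to Abouzaid's twisted computation.
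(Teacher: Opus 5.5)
Your proposal follows the paper's route: twist the framings \eqref{eqn:aux2*} of the inclusion bimodules $\bbI_{r,r+1}$ by $\Phi$, use the stagewise equivalence $\frakM^{{\rm scx},r,\Phi}\simeq(\calL^rQ)^\Phi\otimes_{\rm MU}\frakR^{\rm scx}$, and pass to the colimit via \eqref{eqn:fda}, matching the transition maps with those of $i_{r,r+1}$ through integral homology, a Whitehead theorem for bounded-below spectra, and \cite[Proposition 3.3.13]{Abo14}. One caveat, which you partly flag and which the paper's sketch also leaves implicit: agreement on homology alone does not make the squares commute, so it does not by itself give a comparison map of colimits to which Whitehead applies; your primary plan in Step 2 (naturality of the Morse-theoretic identification with respect to $\bbI_{r,r+1}$ and $i_{r,r+1}$) is what would actually supply that map.
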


\section{Cotangent bundles}
We continue with Floer theory; this section follows, and lifts to spectra, \cite[Chapter 2]{Abo14}.

\subsection{Canonical complex-oriented Floer homotopy type}
Recall, we endow $T^*Q$ with its standard Liouville structure $\omega=d\theta$. We denote by $\rho(\cdot,\cdot)$ the radial coordinate on $T^*Q$ defined via taking the norm of a covector; in particular, 
    \begin{equation}
    T^*Q=D^*Q\cup_{S^*Q}\big(S^*Q\times[1,+\infty)\big),
    \end{equation}
where $D^*Q$ resp. $S^*Q$ is the unit co-disk resp. co-sphere bundle of $Q$.

Let $H\in C^\infty(S^1\times T^*Q)$ be a Hamiltonian with associated Hamiltonian vector field $X_H$ defined via
    \begin{equation}
    \omega(X_H,\cdot)=dH(\cdot);
    \end{equation}
we denote by $\phi^t_H$ the flow of $X_H$. We will take $H$ to be linear at infinity of slope $\tau\in\bbR_{\geq0}$, i.e., $H=\tau\cdot\rho$ near infinity. Moreover, we choose $\tau$ such that it is not the length of any geodesic on $Q$. Recall, a 1-periodic Hamiltonian orbit of $H$ is a smooth map $x:S^1\to T^*Q$ satisfying 
    \begin{equation}
    \partial_tx=X_H(x);
    \end{equation}
we denote by $\chi(H)$ the set of such objects. Meanwhile, let $J$ be an $S^1$-family of $\omega$-compatible almost complex structures on $T^*Q$ which is convex near infinity, where the latter means $d\rho\circ J=-e^h\theta$ in a neighborhood of $S^*Q$ for some locally defined smooth function $h$. We call pairs $(H,J)$ satisfying the conditions above \emph{admissible}.

Let $(H^\ell,J^\ell)$ be admissible Floer data. For any two $x,y\in\chi(H^\ell)$, we denote by $\widetilde{\scrF}^\ell(y,x)$ the moduli space of Floer trajectories connecting $x$ to $y$, i.e., smooth maps $u:\bbR_s\times S^1_t\to T^*Q$ satisfying
    \begin{equation}
    \begin{cases}
    \partial_su+J\big(\partial_t-X_H(u)\big)=0 \\
    \lim_{s\to-\infty}u(s,t)=x(t) \\
    \lim_{s\to+\infty}u(s,t)=y(t).
    \end{cases}
    \end{equation}
Observe, this moduli space may be viewed as the zero set of a section $\overline{\partial}_{H^\ell,J^\ell}$ of an appropriate Banach bundle over the space of all $W^{k,p}$-cylinders, $kp>2$, connecting $x$ to $y$; in particular, it is a smooth manifold of dimension $\deg(x)-\deg(y)$ when the corresponding linearized operator $D(\overline{\partial}_{H^\ell,J^\ell})_u$ is a surjective Fredholm operator for each $u\in\widetilde{\scrF}^\ell(y,x)$ -- this happens for generic $(H^\ell,J^\ell)$. We call such data \emph{regular}. Here, $\deg(x)$ is defined in terms of the Conley-Zehnder index ${\rm CZ}(x)$: 
    \begin{equation}
    \deg(x)\equiv\dim Q-{\rm CZ}(x)+w(x),
    \end{equation}
where $w(x)$ is 0 resp. $-1$ if $x^*TQ$ is orientable resp. non-orientable, cf. \cite[Section 1.4]{Abo14}. Moreover, $T\widetilde{\scrF}^\ell(y,x)$ is classified by the index bundle $\ind D\overline{\partial}_{H^\ell,J^\ell}$ of the family of Fredholm operators 
    \begin{equation}
    D\overline{\partial}_{H^\ell,J^\ell}\equiv\big\{D(\overline{\partial}_{H^\ell,J^\ell})_u\big\}_u.
    \end{equation}

When $x\neq y$, there is a free, proper $\bbR$-action on $\widetilde{\scrF}^\ell(y,x)$ given by time-shift in the $s$-coordinate, and the $\bbR$-quotient $\scrF^\ell(y,x)$ admits a Gromov-compactification $\bbF^\ell(y,x)$ given by broken Floer trajectories. In fact, $\bbF^\ell(y,x)$ is a compact smooth manifold with corners, stratified by breakings at Hamiltonian orbits, whose codimension 1 boundary strata are enumerated by gluing maps of the form 
    \begin{equation}
    \partial_z\equiv\bbF^\ell(y,z)\times\bbF^\ell(z,x)\hookrightarrow\bbF^\ell(y,x),
    \end{equation}
cf. \cite[Section 6]{Lar21}. 

We have the following two basic relations: 
\begin{enumerate}
\item a short exact sequence 
    \begin{equation}
    0\to\underline{\bbR}\to T\bbF^\ell(y,x)\vert_{\partial_z}\to T\bbF^\ell(y,z)\oplus T\bbF^\ell(z,x)\to0
    \end{equation}
given by taking a collar neighborhood of $\partial_z$, 
\item and a short exact sequence 
    \begin{equation}
    0\to\underline{\bbR}\to\ind D\overline{\partial}^{yx}_{H^\ell,J^\ell}\to T\bbF^r(y,x)\vert_{\operatorname{int}\bbF^\ell(y,x)}\to0
    \end{equation}
given by the translational direction.
\end{enumerate}
Moreover, there are straightforward relations when passing to higher codimension boundary strata. The proof of the following result is contained in \cite[Section 7]{Lar21} (cf. also \cite[Section 8]{PS24} and \cite[Section 6]{PS25c}).

\begin{prop}\label{prop:indexfloer}
There is an extension of $\ind D\overline{\partial}^{yx}_{H^\ell,J^\ell}$ to $\bbF^\ell(y,x)$ whose restriction to the interior of a codimension 1 boundary stratum is of the form 
    \begin{equation}
    \ind D\overline{\partial}^{yx}_{H^\ell,J^\ell}\vert_{\partial_z}=\ind D\overline{\partial}^{yz}_{H^\ell,J^\ell}\oplus\ind D\overline{\partial}^{zx}_{H^\ell,J^\ell}.
    \end{equation}
The natural associativity diagram,
    \begin{equation}
    \begin{tikzcd}
    & & \underline{\bbR}\arrow[d] \\
    \underline{\bbR}\arrow[d,"\Delta"]\arrow[r] & \ind D\overline{\partial}^{yx}_{H^\ell,J^\ell}\vert_{\partial_z}\arrow[r]\arrow[d,equals] & T\bbF^\ell(y,x)\vert_{\partial_z}\arrow[d] \\
    \underline{\bbR}^2\arrow[r] & \ind D\overline{\partial}^{yz}_{H^\ell,J^\ell}\oplus\ind D\overline{\partial}^{zx}_{H^\ell,J^\ell}\arrow[r] & T\bbF^\ell(y,z)\oplus T\bbF^\ell(z,x),
    \end{tikzcd}
    \end{equation}
commutes. Moreover, the natural associativity diagram associated to higher codimension boundary strata commute.
\end{prop}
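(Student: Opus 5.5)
The plan is to follow the strategy of Large's construction \cite[Section 7]{Lar21}, which Proposition \ref{prop:indexfloer} explicitly imitates. Concretely, I would realize the index bundle over the whole compactified moduli space as the index of a family of Fredholm operators over a \emph{thickened} parameter space, so that near broken configurations the family is a family of linearized operators on glued (pre-gluing) domains.

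The first step is to extend the family $D\overline{\partial}^{yx}_{H^\ell,J^\ell}$ over $\bbF^\ell(y,x)$. Near a boundary stratum $\partial_z$, I would use a collar parameter $R\in[R_0,+\infty]$ (the gluing length). For $R<\infty$ I would take the linearized Cauchy--Riemann operator on the pre-glued cylinder. At $R=\infty$ I would take the direct sum of the linearized operators on the two broken cylinders,
\begin{equation}
D(\overline{\partial})_{u_1}\oplus D(\overline{\partial})_{u_2}.
\end{equation}
Since the asymptotic orbit $z$ is nondegenerate, the standard linear gluing theorem gives a continuous identification of kernels and cokernels for $R$ large. So the family is a continuous family of Fredholm operators over the compactification. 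I would stabilize it, choosing finite-rank vector spaces mapping onto the cokernels compatibly, to make the index bundle an honest vector bundle. Its restriction to $\partial_z$ is then by construction $\ind D\overline{\partial}^{yz}\oplus\ind D\overline{\partial}^{zx}$.

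The second step is to make the stabilizations coherent over all strata at once. I would do this by induction on the action (energy) filtration of the pair $(x,y)$, choosing the stabilizing spaces on $\bbF^\ell(y,x)$ to restrict to direct sums of the previously chosen ones on each face. The same applies at higher codimension corners, which are products of several moduli spaces. Associativity of the direct-sum identifications then holds strictly, since the identifications come from linear gluing applied in different orders and linear gluing commutes in different necks up to canonical homotopy. That homotopy can be rigidified by choosing the corner collars first.

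The third step is to check the displayed diagram. In the interior, $\underline{\bbR}\to\ind$ is the infinitesimal translation $\partial_s u$. At the boundary, the translation of the glued cylinder approximates, under linear gluing, the pair $(\partial_s u_1,\partial_s u_2)$, which is the diagonal $\Delta$. The extra $\underline{\bbR}$ in $T\bbF^\ell(y,x)\vert_{\partial_z}$ is the gluing parameter $R$, which maps to the anti-diagonal class. Hence the square commutes, and the column is exact.

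I expect the main obstacle to be the corner-coherent choice of stabilizations and collar data. Making the gluing identifications strictly associative over all higher codimension strata simultaneously requires a careful inductive choice. I would handle it as in \cite[Section 7]{Lar21} and \cite[Section 8]{PS24}: work with compatible systems of collars and an inductive extension of stabilizing maps, using that extending a surjectivity condition from a neighborhood of the boundary to the interior is an open condition.
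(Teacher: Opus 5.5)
Your proposal is correct and takes the same route as the paper, which gives no independent argument for this proposition but defers entirely to \cite[Section 7]{Lar21} (cf.\ also \cite[Section 8]{PS24} and \cite[Section 6]{PS25c}). Your sketch is essentially the argument carried out there: you extend the operator family over collars by linear gluing, make the choices coherently over corners by induction, and match the translation with $\Delta$ and the gluing parameter with the anti-diagonal. One simplification is available: since the Floer data is regular, every operator in the family is already surjective (the broken direct sums, and by linear gluing the nearby pre-glued ones), so the index bundle is just the kernel bundle and the stabilization step is unnecessary for this particular statement.
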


We may assume that we have a Hermitian connection $\nabla$ on $T(T^*Q)$ such that, given any $x\in\chi(H^\ell)$, $\nabla$ is flat in a neighborhood of the image of $x$. Let $\bbF^\ell$ be the unstructured flow category with objects $\chi(H^\ell)$ and morphisms spaces $\bbF^\ell(y,x)$; the composition map is gluing of Floer trajectories. There is a canonical way to construct stable complex structures on the Floer moduli spaces, 
    \begin{equation}\label{eqn:canonicalMUFloer}
    T\bbF^\ell(y,x)+\underline{\bbR}^{-\deg(x)}+\underline{\bbR}\cong I(y,x)+\underline{\bbR}^{-\deg(y)},
    \end{equation}
as follows, cf. \cite[Proposition 3.35 \& 3.40]{BP26}. 

Given $u\in\operatorname{int}\bbF^\ell(y,x)$, we consider a Fredholm operator 
    \begin{equation}\label{eqn:analogouscomplex}
    \overline{D(\overline{\partial}_{H^\ell,J^\ell})_u}:W^{k,p}(\bbR\times S^1;\bbC^{\dim Q})\to W^{k-1,p}(\bbR\times S^1;\bbC^{\dim Q})
    \end{equation}
whose asymptotic condition at $-\infty$ resp. $+\infty$ is the conjugate of the asymptotic condition of $D(\overline{\partial}_{H^\ell,J^\ell})_u$ at $-\infty$ resp. $+\infty$; this operator canonically satisfies
    \begin{equation}
    \ind\overline{D(\overline{\partial}_{H^\ell,J^\ell})_u}\cong\bbR^{\deg(y)-\deg(x)}.
    \end{equation}
In particular, we set 
    \begin{equation}
    I(y,x)\equiv\ind\Big(D\overline{\partial}_{H^\ell,J^\ell}\oplus\overline{D\overline{\partial}_{H^\ell,J^\ell}}\Big),
    \end{equation}
and we see 
    \begin{equation}
    T\scrF^\ell(y,x)+\underline{\bbR}^{-\deg(x)}+\underline{\bbR}\cong I(y,x)+\underline{\bbR}^{-\deg(y)}.
    \end{equation}
The fact that this complex orientation (1) extends over the Gromov-compactification and (2) has the correct restriction to a codimension 1 boundary stratum both follow analogously to the proof of Corollary \ref{cor:extension}.

\begin{lem}
$\bbF^\ell$ lifts to a complex-oriented flow category $\bbF^{{\rm MU},\ell}$ satisfying 
    \begin{equation}
    H_*(\frakF^{{\rm MU},\ell};\bbZ)\cong HF^{-*}(T^*Q;H^\ell,J^\ell;\bbZ),\;\;\frakF^{{\rm MU},\ell}\equiv{\rm Flow}^{\rm cx}(\unit,\bbF^{{\rm MU},\ell}).
    \end{equation}
\end{lem}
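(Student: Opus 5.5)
The lemma has two parts. First, $\bbF^\ell$ together with the stable complex structures \eqref{eqn:canonicalMUFloer} is a complex-oriented flow category. Second, the integral homology of $\frakF^{{\rm MU},\ell}$ computes Floer cohomology. I will prove them in that order.

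\textbf{The flow category.} Take $V_x\equiv\bbR^{-\deg(x)}$, $W\equiv0$, and $I(y,x)$ the index bundle of $D\overline{\partial}_{H^\ell,J^\ell}\oplus\overline{D\overline{\partial}_{H^\ell,J^\ell}}$. The rank identity $\ind\overline{D(\overline{\partial})_u}\cong\bbR^{\deg(y)-\deg(x)}$ comes from the canonical trivialization of the conjugate problem. Combining it with the translation sequence
\[
0\to\underline{\bbR}\to\ind D\overline{\partial}\to T\scrF\to0
\]
gives the equivalence on the interior.

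The only real input is Proposition \ref{prop:indexfloer}: the index bundles extend over the Gromov compactification and split as direct sums along boundary strata, with commuting associativity diagrams. The same gluing argument applied to the conjugate operators gives the same splitting for $\overline{D\overline{\partial}}$, and hence for $I$, compatibly with concatenation. The translation and collar directions then match the $\underline{\bbR}^{\abs{\{j+1,\ldots,j'\}}}$ term as in the definition of a complex-oriented flow category.

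The expected obstacle is canonicity of the complex structure on $I(y,x)$. One must check that $D\oplus\overline{D}$ is homotopic, through operators compatible with gluing, to a complex-linear operator, so that its index carries a complex structure compatible with the splittings. I would argue this as in \cite[Proposition 3.35 \& 3.40]{BP26}, by deforming the asymptotics of $D\oplus\overline{D}$, which are conjugate on the two summands, to complex-linear ones over $\bbC^{\dim Q}\otimes\bbC$.

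\textbf{The homology computation.} By the cellular or Atiyah--Hirzebruch description of $\flow^{\rm cx}(\unit,-)$, the spectrum $\frakF^{{\rm MU},\ell}$ has a filtration by action with associated graded a wedge of shifted copies of ${\rm MU}$, one for each orbit, in degree $-\deg(x)$. Since ${\rm MU}\to H\bbZ$ is the Thom class, the $E_1$-differential on $H\bbZ$-homology counts points of the $0$-dimensional $\bbF^\ell(y,x)$ with the signs induced by the complex orientation. Because $H_*({\rm MU};\bbZ)$ is not just $\bbZ$, I will read $H_*$ as $\pi_*(\frakF^{{\rm MU},\ell}\otimes_{\rm MU}H\bbZ)$.

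It then remains to compare these signs with Abouzaid's orientation lines $\frako_x$. The determinant line of $D\oplus\overline{D}$ is oriented by its complex structure. After gluing with capping operators, $\overline{D}$ carries the canonical orientation, so the induced orientation on $\det D$ agrees with Abouzaid's \cite[Section 1.4]{Abo14}, up to a global convention independent of $u$. The resulting chain complex is therefore $CF^{-*}(H^\ell,J^\ell;\bbZ)$, and the lemma follows.
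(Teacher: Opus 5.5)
Your proposal follows the paper's construction. You use the same data $V_x=\bbR^{-\deg(x)}$ and $I(y,x)=\ind\big(D\overline{\partial}_{H^\ell,J^\ell}\oplus\overline{D\overline{\partial}_{H^\ell,J^\ell}}\big)$, with the complex structure imported from \cite[Proposition 3.35 \& 3.40]{BP26} and the extension and boundary compatibility coming from Proposition \ref{prop:indexfloer} and gluing as in Corollary \ref{cor:extension}. The paper leaves the homology identification implicit, and your standard filtration and sign-comparison argument, reading $H_*$ relative to ${\rm MU}$ (i.e.\ after $-\otimes_{\rm MU}H\bbZ$), is the intended one.
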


Let $(H^\ell,J^\ell)$ and $(H^{\ell+1},J^{\ell+1})$ be regular Floer data such that the slopes satisfy $\tau_\ell\leq\tau_{\ell+1}$. As before, we may assume that $\nabla$ is flat in a neighborhood of the image of $x$, for any $\ell$ and $x\in\chi(H^\ell)$. We choose monotonic continuation data $(H_s,J_s)$ connecting $(H^{\ell+1},J^{\ell+1})$ to $(H^\ell,J^\ell)$, i.e., a non-degenerate $\bbR$-dependent family of Hamiltonians, linear at infinity of decreasing slope in $s$, connecting $H^{\ell+1}$ to $H^\ell$ and an $\bbR$-dependent family of ($S^1$-families of) $\omega$-compatible almost complex structures $J_s$, convex at infinity, connecting $J^{\ell+1}$ to $J^\ell$. For any $x\in\chi(H^\ell)$ and $y\in\chi(H^{\ell+1})$, we denote by $\widetilde{\frakc}_{\ell,\ell+1}(x,y)$ the moduli space of Floer continuation cylinders connecting $y$ to $x$, i.e., smooth maps $u:\bbR\times S^1\to T^*Q$ satisfying 
    \begin{equation}
    \begin{cases}
    \partial_su+J\big(\partial_t-X_{H_s}(u)\big)=0 \\
    \lim_{s\to-\infty}u(s,t)=y(t) \\
    \lim_{s\to+\infty}u(s,t)=x(t).
    \end{cases}
    \end{equation}
    
Analogously to before, $\widetilde{\frakc}_{\ell,\ell+1}(x,y)$ admits a Gromov-compactification $\frakc_{\ell,\ell+1}(x,y)$ given by broken Floer continuation cylinders; this is a compact smooth manifold with corners, stratified by breakings at Hamiltonian orbits, whose codimension 1 boundary strata are enumerated by gluing maps of the form 
    \begin{align}
    \bbF^\ell(x,x')\times\frakc_{\ell,\ell+1}(x',y)&\hookrightarrow\frakc_{\ell,\ell+1}(x,y), \\
    \frakc_{\ell,\ell+1}(x,y')\times\bbF^{\ell+1}(y',y)&\hookrightarrow\frakc_{\ell,\ell+1}(x,y).
    \end{align}
Cf. \cite[Section 6]{Lar21} for the transversality statements.

Moreover, analogously to before, there is a canonical way to construct stable complex structures: 
    \begin{equation}\label{eqn:canonicalMUFloercontiuation}
    T\frakc_{\ell,\ell+1}(x,y)+\underline{\bbR}^{-\deg(y)}\cong I(y,x)+\underline{\bbR}^{-\deg(x)}.
    \end{equation}
In particular, we have constructed a complex-oriented flow bimodule 
    \begin{equation}
    \frakc^{\rm MU}_{\ell,\ell+1}:\bbF^{{\rm MU},\ell}\to\bbF^{{\rm MU},\ell+1},
    \end{equation}
hence a map
    \begin{equation}
    \frakc^{\rm MU}_{\ell,\ell+1}:\frakF^{{\rm MU},\ell}\to\frakF^{{\rm MU},\ell+1}
    \end{equation}
which agrees, on integral homology, with the usual Floer continuation map. 

\begin{prop}
We have that $\frakF^{\rm MU}\equiv\varinjlim_\ell\frakF^{{\rm MU},\ell}$ satisfies
    \begin{equation}
    H_*(\frakF^{\rm MU};\bbZ)\cong SH^{-*}(T^*Q;\bbZ),
    \end{equation}
where the colimit is over a family of increasing slope.
\end{prop}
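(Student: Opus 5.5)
The proof will compare the integral homology of the complex-oriented flow categories with the integral Floer cochain complexes, and then pass to the colimit over increasing slope. The first input is the preceding lemma: for each $\ell$, $H_*(\frakF^{{\rm MU},\ell};\bbZ)\cong HF^{-*}(T^*Q;H^\ell,J^\ell;\bbZ)$. For any complex-oriented flow category $\bbX$, the spectrum $\flow^{\rm cx}(\unit,\bbX)\otimes_{\rm MU}H\bbZ$ is computed by a cellular chain complex. Its generators are the objects $x$, placed in degree determined by $V_x$, here $-\deg(x)$. Its differential counts the zero-dimensional morphism spaces $\bbX(y,x)$ with signs coming from the orientations induced by the complex orientations of $I(y,x)$. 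The Thom isomorphism $H\bbZ\otimes{\rm MU}\simeq H\bbZ[b_1,b_2,\ldots]$, together with the fact that ${\rm MU}\to H\bbZ$ is the complex orientation, identifies $H_*(\frakF^{{\rm MU},\ell};\bbZ)$ with the homology of this complex. Since we want the homology of $\frakF^{{\rm MU},\ell}$ itself, we can either interpret $H_*$ as homology of the $H\bbZ$-base change along ${\rm MU}\to H\bbZ$, or note that the Atiyah--Hirzebruch filtration collapses in the needed way. I will take the former convention, which is what the preceding lemma implicitly uses.

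Next I will check that the continuation bimodules $\frakc^{\rm MU}_{\ell,\ell+1}$ induce the usual Floer continuation maps on $H\bbZ$-homology. The zero-dimensional components of $\frakc_{\ell,\ell+1}(x,y)$ define a chain map between the cellular complexes, and by construction this is the count of continuation cylinders. The signs are the only thing to verify. The orientation on $\frakc_{\ell,\ell+1}(x,y)$ induced by \eqref{eqn:canonicalMUFloercontiuation} must agree with Abouzaid's orientation convention in \cite[Chapter 1]{Abo14}, which defines $SH^*(T^*Q;\bbZ)$ with trivial coefficients. I expect this sign comparison to be the main obstacle. The canonical complex orientation uses $\ind(D\oplus\overline{D})$, which is canonically complex and hence oriented. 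Abouzaid instead orients $\ind D$ using the determinant lines $\frako_x$ of the asymptotic operators. To reconcile the two, I will use the gluing isomorphism $\det(\ind\overline{D}_u)\cong\frako_y^{\vee}\otimes\frako_x$, which becomes canonical after the identification $\ind\overline{D}_u\cong\bbR^{\deg(y)-\deg(x)}$. This shows that the complex orientation of $I(y,x)$ is equivalent to orienting $\ind D$ relative to the lines $\frako_x,\frako_y$, which is exactly Abouzaid's recipe. Compatibility with breaking, as in Proposition \ref{prop:indexfloer}, shows that these identifications are coherent, so the differentials and continuation maps agree up to a global change of basis by the units of $\frako_x$.

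Finally, homology commutes with filtered colimits of spectra, since $H\bbZ\otimes_{\rm MU}-$ preserves colimits and homotopy groups commute with sequential colimits. This gives
\[
H_*(\frakF^{\rm MU};\bbZ)\cong\varinjlim_\ell HF^{-*}(T^*Q;H^\ell,J^\ell;\bbZ).
\]
By definition this is $SH^{-*}(T^*Q;\bbZ)$, using that the family of slopes is cofinal and the continuation maps are the standard ones. Functoriality of continuation maps under composition, up to homotopy, is automatic because we only use the induced maps on homology at each stage.
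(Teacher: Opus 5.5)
Your proposal is correct and follows the same route the paper takes. The paper states the proposition without a separate proof, relying on three inputs: the per-slope identification $H_*(\frakF^{{\rm MU},\ell};\bbZ)\cong HF^{-*}(T^*Q;H^\ell,J^\ell;\bbZ)$, the fact that $\frakc^{\rm MU}_{\ell,\ell+1}$ induces the usual continuation maps on integral homology, and commuting $\pi_*(-\otimes_{\rm MU}H\bbZ)$ with the sequential colimit. Your sign comparison supplies detail the paper only asserts.

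One small correction: only the base-change reading $H_*(-;\bbZ)=\pi_*(-\otimes_{\rm MU}H\bbZ)$ that you adopt makes the statement true. Already for the underlying spectrum $H_*({\rm MU};\bbZ)\cong\bbZ[b_1,b_2,\ldots]$, whereas $\pi_*({\rm MU}\otimes_{\rm MU}H\bbZ)\cong\bbZ$. So the Thom-isomorphism remark and the Atiyah--Hirzebruch alternative should simply be dropped.
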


\subsection{Framed Floer homotopy type}
Recall, we denote by $\Lambda_\std$ the ``standard'' stable $\bbR$-polarization on $T^*Q$ induced by the Levi-Civita connection on $Q$ with respect to some Riemannian metric:
    \begin{equation}
    T(T^*Q)\cong\Lambda_\std\otimes_\bbR\underline{\bbC},\;\;\Lambda_\std\equiv\pi^*TQ,\;\;\pi:T^*Q\to Q.
    \end{equation}
We may construct standard twisted stable framings on the Floer moduli spaces, 
    \begin{equation}
    T\bbF^\ell(y,x)+\underline{\ind D}^\calA_x+\underline{\bbR}\cong\underline{\ind D}^\calA_y,
    \end{equation}
as follows.

We define 
    \begin{align}
    \widetilde{x}:\bbR\times S^1&\to T^*Q \\
    (s,t)&\mapsto x(t). \nonumber
    \end{align}
Let $D^\calA_x$ be a \emph{Floer abstract cap} for $x$, i.e., a Fredholm operator (with totally real boundary conditions)
    \begin{equation}
    D^\calA_x:W^{k,p}\big(\bbR_{\geq0}\times S^1;\widetilde{x}^*T(T^*Q),x^*\Lambda_\std\big)\to W^{k-1,p}\big(\bbR_{\geq0}\times S^1;\widetilde{x}^*T(T^*Q)\big)
    \end{equation}
with asymptotic operator $D(\overline{\partial}_{H^\ell,J^\ell})_x$ at $+\infty$. Given $u\in\operatorname{int}\bbF^\ell(y,x)$, we consider the glued together operator
    \begin{equation}
    D^\calA_x\#D(\overline{\partial}_{H^\ell,J^\ell})_u.
    \end{equation}

\begin{lem}\label{lem:symmetrybreaking}
After fixing the line $s\mapsto(s,1)$ on $\bbR\times S^1$, we have a canonical isomorphism of virtual vector spaces
    \begin{equation}
    \ind\big(D^\calA_x\#D(\overline{\partial}_{H^\ell,J^\ell})_u\big)\cong\ind D^\calA_y.
    \end{equation}
\end{lem}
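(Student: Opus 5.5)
The plan is to reduce the statement to the invariance of the index of a real Cauchy--Riemann operator, with totally real boundary conditions, under deformation of its boundary data. This is done by sliding the boundary circle of the capped operator along the Floer cylinder $u$, from the $x$-end to the $y$-end. The required identification is then given by parallel transport of index bundles along a path of Fredholm operators. The fixed line $s\mapsto(s,1)$ is needed to make one auxiliary choice canonical, namely a trivialisation of the relevant bundles and boundary conditions along a cut of the cylinder, which is exactly what is missing when $x^*\Lambda_\std$ or $y^*\Lambda_\std$ is non-orientable.

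\textbf{Step 1 (reparametrise the glued operator).} By linear gluing, $\ind\big(D^\calA_x\#D(\overline{\partial}_{H^\ell,J^\ell})_u\big)$ is canonically the index of a single operator on the half-cylinder $\bbR_{\geq0}\times S^1$. Its underlying bundle is obtained by concatenating $\widetilde{x}^*T(T^*Q)$ with $u^*T(T^*Q)$, its boundary condition is $x^*\Lambda_\std$, and its asymptotic operator at $+\infty$ is $D(\overline{\partial}_{H^\ell,J^\ell})_y$. For each $\sigma\in[-\infty,+\infty]$, consider the truncated family $u_\sigma\equiv u\vert_{[\sigma,+\infty)\times S^1}$. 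Take $E_\sigma\equiv u_\sigma^*T(T^*Q)$ on $[\sigma,+\infty)\times S^1\cong\bbR_{\geq0}\times S^1$, with boundary condition $u(\sigma,\cdot)^*\Lambda_\std$. This makes sense because $\Lambda_\std$ is a \emph{global} totally real subbundle, and that is exactly what lets the boundary condition move with $u$. On $E_\sigma$ take a Cauchy--Riemann operator interpolating between a cap-type operator near the boundary and $D(\overline{\partial}_{H^\ell,J^\ell})_u$ further in. At $\sigma=-\infty$ this recovers the glued operator, and at $\sigma=+\infty$ it recovers an abstract cap for $y$. Since $u$ converges exponentially to $x$ and $y$ at the ends, this gives a continuous path of Fredholm operators on a compactified parameter interval. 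The remaining choices (cap-type operators with fixed boundary and asymptotic data) form a contractible space, so the path is well defined up to contractible choice.

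\textbf{Step 2 (identify the bundles along the path).} For the path of Step 1 to be a path of operators on a \emph{fixed} Banach space, the bundles $E_\sigma$ and boundary conditions must be identified with a fixed model. Here I would use the line $s\mapsto(s,1)$. Cut $\bbR\times S^1$ along $\{t=1\}$ and use the Hermitian connection $\nabla$ together with the splitting $T(T^*Q)=\Lambda_\std\otimes\bbC$ to trivialise $u^*\Lambda_\std$ over the strip $\bbR\times[0,1]$, normalised along the line $s\mapsto(s,1)$. The monodromy of $x^*\Lambda_\std$ and of $y^*\Lambda_\std$ around $S^1$ then becomes an explicit gluing matrix in ${\rm O}(\dim Q)$, possibly of determinant $-1$, and it is transported continuously in $\sigma$. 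With these trivialisations every $D_\sigma$ becomes an operator between fixed Sobolev spaces on the strip, with a $\sigma$-dependent twisted periodicity condition. The index bundle of the family over $[-\infty,+\infty]$ therefore gives a canonical isomorphism $\ind D_{-\infty}\cong\ind D_{+\infty}$, which is the claimed isomorphism. The output is a virtual vector space, not merely an isomorphism class, because the interval is contractible and the choices of Step 1 are contractible.

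\textbf{Main obstacle.} The delicate point is Step 2 in the non-orientable case. If $x^*TQ$ is non-orientable, there is no $S^1$-invariant way to trivialise $x^*\Lambda_\std$. A rotation of the cylinder moves the cut line, and transporting the identification once around $S^1$ can change it by the orientation-reversing automorphism of the index, which accounts for the $w(x)$ correction in $\deg$. I would therefore check carefully that the isomorphism depends only on the homotopy class of the cut line relative to the ends. Fixing $s\mapsto(s,1)$ once and for all makes it canonical, and I would also verify that it is compatible with the splitting of Proposition \ref{prop:indexfloer} under breaking. I expect this verification, namely that the isomorphisms for $u\in\bbF^\ell(y,z)\times\bbF^\ell(z,x)$ compose to the one for the glued trajectory, to be where the real work lies. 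I would handle it by applying the same cut-line trivialisation to all three operators and using associativity of linear gluing.
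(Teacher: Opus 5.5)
Your argument is correct, but it fixes the deformation of boundary conditions differently from the paper.

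The paper proceeds in three steps:
\begin{enumerate}
\item It re-identifies $D^\calA_x\#D(\overline{\partial}_{H^\ell,J^\ell})_u$ with an operator on $\widetilde{y}^*T(T^*Q)$ carrying some totally real boundary condition $F$.
\item It treats the homotopy $F\simeq y^*\Lambda_\std$ as non-canonical, reduces that choice to an isomorphism $T_{x(0)}Q\cong T_{y(0)}Q$, and fixes it by Levi-Civita parallel transport along $s\mapsto(s,1)$.
\item Deformation invariance of the index then gives the isomorphism.
\end{enumerate}
You instead use the homotopy that $u$ itself supplies: the moving boundary condition $u(\sigma,\cdot)^*\Lambda_\std$ over a contractible parameter interval. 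This is canonical given $u$ and continuous in $u$. At $t=1$ it is, up to homotopy, exactly the paper's parallel transport, so the two identifications should agree.

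One consequence you did not draw is that the line does no work in your argument. Index bundles are defined for Fredholm families between Banach bundles. Over $[-\infty,+\infty]$, any trivialisation of those bundles gives the same identification up to contractible choice. So Step 2 is superfluous; it is not where non-orientability gets handled.

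The line matters only for $S^1$-families (loop rotation), where the fixed caps $D^\calA_x$ can have orientation-reversing monodromy. That issue is unrelated to the shift $w(x)$ in $\deg(x)$, which is a grading correction, not a sign.

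What each route buys:
\begin{itemize}
\item Yours makes compatibility with breaking, which you flag as the main difficulty, nearly automatic: sliding along a glued trajectory is just the concatenation of the two slidings.
\item The paper's is shorter, and it isolates a single finite-dimensional choice that it reuses for the ${\rm MU}$-local system and for Lemma \ref{lem:symmetrybreaking2}.
\end{itemize}
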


\begin{proof}
We may canonically identify $D^\calA_x\#D(\overline{\partial}_{H^\ell,J^\ell})_u$ with an operator 
    \begin{equation}
    W^{k,p}\big(\bbR_{\geq0}\times S^1;\widetilde{y}^*T(T^*Q),F\big)\to W^{k-1,p}\big(\bbR_{\geq0}\times S^1;\widetilde{y}^*T(T^*Q)\big),
    \end{equation}
with asymptotic operator $D(\overline{\partial}_{H^\ell,J^\ell})_y$ at $+\infty$, where $F\subset\widetilde{y}^*T(T^*Q)$ is some totally real subbundle which admits a non-canonical homotopy to $b^*\Lambda_\std$ through totally real subbundles. In particular, a choice of such a homotopy is equivalent to a choice of isomorphism of real vector spaces 
    \begin{equation}
    T_{x(0)}Q\cong T_{y(0)}Q.
    \end{equation}
We provide such a choice by parallel transporting along $s\mapsto(s,1)$ (utilizing the Levi-Civita connection on $Q$ with respect to some Riemannian metric).
\end{proof}

As an aside, if we were working in the special case that $Q$ is oriented, we could keep the $S^1$-symmetry intact by instead using the following result.

\begin{lem}\label{lem:nonsymmetrybreaking}
Let $E\equiv F\otimes_\bbR\underline{\bbC}\to[0,1]\times S^1$ be a complex vector bundle over the finite cylinder which is the complexification of a totally real subbundle. Consider a Cauchy-Riemann operator (with totally real boundary conditions):
    \begin{equation}
    D:W^{k,p}([0,1]\times S^1;E,F)\to W^{k-1,p}([0,1]\times S^1;E).
    \end{equation}
If $F$ is oriented, then we have a canonical isomorphism of virtual vector spaces 
    \begin{equation}
    \ind D\cong0.
    \end{equation}
\end{lem}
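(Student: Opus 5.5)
The plan is to reduce the statement to a boundary value problem on the disk, or equivalently on the strip, where the index can be read off from the Maslov index and orientations are canonical. The bundle $E=F\otimes_\bbR\underline{\bbC}$ over $[0,1]\times S^1$ is complexified. I first trivialize it: since $F$ is oriented over the annulus, its restriction to $\{0\}\times S^1$ is an oriented real bundle over a circle, hence trivial. Because $[0,1]\times S^1$ deformation retracts onto $\{0\}\times S^1$, the choice of orientation pins down a trivialization $F\cong\underline{\bbR}^n$ up to a contractible choice in the stable range. Below the stable range the ambiguity is a class in $\pi_1(SO(n))$; I will deal with it by stabilizing, passing to $F\oplus\underline{\bbR}^N$ and adding the standard $\overline\partial$ on $\underline{\bbC}^N$ with boundary condition $\underline{\bbR}^N$. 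That extra operator has canonically trivial index.

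Next, deform $D$ within the contractible space of Cauchy-Riemann operators on $E$ with the fixed totally real boundary condition $F|_{\partial}$. This gives a canonical isomorphism between $\ind D$ and $\ind D_0$, where $D_0$ is the standard operator $\overline\partial$ on $[0,1]\times S^1$ with values in $\bbC^n$ and boundary condition $\bbR^n$ on both circles. The boundary loops are constant, so the Maslov index vanishes. Riemann-Roch then gives $\ind D_0$ Euler characteristic $n\chi([0,1]\times S^1)=0$.

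It remains to compute $D_0$ itself. Its kernel is the set of holomorphic maps from the annulus to $\bbC^n$ that are real on both boundary circles. Reflection shows these are constant real vectors, so $\ker D_0\cong\bbR^n$. The cokernel is identified, through the formal adjoint, with antiholomorphic one-forms satisfying dual real boundary conditions; these are the constant multiples of $d\bar z/\bar z$ in the annular coordinate, so $\operatorname{coker}D_0\cong\bbR^n$. The canonical isomorphism $\ind D_0\cong0$ comes from pairing these two copies of $\bbR^n$. Both are naturally identified with the fiber $F_p\cong\bbR^n$ at a point, and I will use the identity map between them. This identification does not depend on the point $p$, because $F$ is trivialized compatibly around the whole circle.

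The main obstacle is showing the result is canonical, meaning independent of the trivialization and of the stabilization. Changing the trivialization by a loop in $SO(n)$ acts on $\ker D_0$ and on $\operatorname{coker}D_0$ by the same element, so the identity pairing is preserved. A loop in $O(n)\setminus SO(n)$ is exactly what orientedness excludes; in the non-oriented case the monodromy would obstruct the whole construction, which is why Lemma~\ref{lem:symmetrybreaking} instead breaks the symmetry. I expect the most delicate point to be checking that the homotopy in $\pi_1(SO(n))$, in the stable range, induces the trivial automorphism of the determinant line. I would verify this using the standard fact that the determinant line of a family of real Cauchy-Riemann operators pulls back trivially along null-homotopies, together with $\pi_1(SO)=\bbZ/2$ acting through $w_1$ of $F$, which is zero.
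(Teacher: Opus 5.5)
The gap is in the step that makes the isomorphism canonical, namely independence of the trivialization of $F$. Your model computation is correct, and it is a different route from the paper. The paper never computes on the annulus. It collapses the core circle $\Gamma=\{1/2\}\times S^1$ to a node, writes $D$ as a nodal gluing of two disk operators whose boundary conditions extend over the disks, and reads off $\ind D\cong F\vert_0+F\vert_0-E\vert_0\cong 0$. You instead find $\ker D_0=\bbR^n$ and $\operatorname{coker}D_0=i\bbR^n$ (constants) and pair them by $\xi\mapsto i\xi$.

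The canonicity step has two problems.

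\emph{The ambiguity is not removed by stabilizing.} Your claim that the orientation fixes the trivialization up to a contractible choice in the stable range is false. Orientation-compatible trivializations of $F\vert_{\{0\}\times S^1}$ form a torsor over $\pi_1(SO(n))$, and $\pi_1(SO(n))\to\pi_1(SO)=\bbZ/2$ is an isomorphism for $n\geq 3$. This $\bbZ/2$ is exactly the choice of a spin structure on the loop.

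\emph{Your reason for this $\bbZ/2$ being harmless is wrong.} You say it acts ``through $w_1(F)$'', but $w_1(F)=0$ is just the hypothesis. The generator of $\pi_1(SO)$ is precisely what makes disk orientations depend on the spin structure: for a single disk with totally real boundary loop, changing the boundary trivialization by it flips the sign of the induced identification of the determinant line. For the same reason, saying that $g$ ``acts on $\ker D_0$ and $\operatorname{coker}D_0$ by the same element'' settles nothing. The isomorphism $\ind D_0\cong\ind(gD_0g^{-1})$ you must use is transport through the contractible space of Cauchy--Riemann operators with boundary condition $F$. Whether that agrees with the map induced by the bundle automorphism $g$ is exactly the question, and for a single disk it does not.

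What saves the annulus is that a change of trivialization of $F$ hits both boundary circles, and the two signs cancel; you need an argument for this. It suffices to take $g(t)=e^{2\pi tJ}$, with $J$ the rotation by $\pi/2$ in a $2$-plane, and the path $D_\lambda=\partial_s+i(\partial_t-2\pi\lambda J)$ from $D_0$ to $D_1=gD_0g^{-1}$.
\begin{itemize}
\item A Fourier computation shows $D_\lambda$ is invertible on that plane for $0<\lambda<1$.
\item At $\lambda=0,1$ the kernel there is $2$-dimensional, and the crossing operator $\ker D_\lambda\to\operatorname{coker}D_\lambda$ is $\xi\mapsto -2\pi iJ\xi$ (use $Jg=gJ$).
\item Relative to your $\xi\mapsto i\xi$ this is $-2\pi J$, which has positive determinant at both ends.
\item Since the kernel is even-dimensional, near both ends the canonical section of the determinant line has the same sign relative to your trivialization.
\end{itemize}
So your identification is preserved, but for this reason, not because of $w_1$.

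A slicker alternative: $\nabla_s+i\nabla_t+i\epsilon$ is invertible for every flat orthogonal connection $\nabla$ on $F$ and every $0<\epsilon<\pi$. These operators form a contractible family, and the crossing at $\epsilon=0$ is exactly your $\xi\mapsto i\xi$.

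In the paper's degeneration this cancellation is built in. The same trivialization of $F\vert_\Gamma$ caps off both disks $D_1$ and $D_2$, so changing it alters both identifications $\ind D_j\cong F\vert_0$ at once.
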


\begin{proof}
Let $\Gamma:S^1\to[0,1]\times S^1$ be the loop covering $\{1/2\}\times S^1$ exactly once. Since $F\vert_\Gamma$ is oriented, it is trivialized, hence we may collapse $\Gamma$ to a point in order to identify $D$ as the gluing of two operators 
    \begin{equation}
    D_j:W^{k,p}(D^2_j;E_j,F_j)\to W^{k-1,p}(D^2_j;E_j),\;\;j=1,2,
    \end{equation}
where $D^2_j\subset\bbC$ is the unit disk. When performing the gluing, we may only glue together sections which agree at the origins of the two disks. In particular, the glued together operator 
    \begin{equation}
    D_1\#D_2:W^{k,p}([0,1]\times S^1;E,F)\to W^{k-1,p}([0,1]\times S^1;E), 
    \end{equation}
which may be canonically deformed to $D$, canonically satisfies 
    \begin{equation}
    \ind D\cong\ind(D_1\#D_2)\cong\ind D_1+\ind D_2-\Delta^\perp\cong F\vert_0+F\vert_0-E\vert_0\cong0,
    \end{equation}
where the third isomorphism uses that (1) the totally real boundary conditions of $D_j$ extends over $D^2_j$ and (2) the short exact sequence 
    \begin{equation}
    0\to E\vert_0\xrightarrow{\Delta} E\vert_0^{\oplus2}\to\Delta^\perp\to0.
    \end{equation}
\end{proof}

Since the identification in Lemma \ref{lem:symmetrybreaking} is canonical, it immediately extends to families, i.e., we obtain standard twisted stable framings 
    \begin{equation}
    T\scrF^\ell(y,x)+\underline{\ind D}^\calA_x+\underline{\bbR}\cong\underline{\ind D}^\calA_y.
    \end{equation}
That this twisted stable framing (1) extends over the Gromov-compactification and (2) has the correct restriction to a codimension 1 boundary stratum follows analogously to the proof of Corollary \ref{cor:extension}.

Consider the $\bbZ$-graded, only supported in degree $-\dim Q$, local system $\eta$ on $\calL Q$ defined via 
    \begin{equation}\label{eqn:localsystem}
    \eta_\Gamma\equiv\sigma^{TQ}\vert_\Gamma\otimes o^{-1}_{TQ}\vert_{\Gamma(0)}\otimes\big(o_{TQ}\vert_{\Gamma(0)}[\dim Q]\big)^{\otimes-w(\Gamma)},\;\;\Gamma\in\calL Q, 
    \end{equation}
where: $\sigma^{TQ}$ is the local system of trivializations of $\Gamma^*\big(TQ\oplus\det_\bbR(TQ)^{\oplus3}\big)$ (i.e., the space of spin structures), $o_{TQ}$ is the orientation local system, $[\dim Q]$ indicates shifting the degree down by $\dim Q$, and $w(\Gamma)$ equals 0 resp. $-1$ if $\Gamma^*TQ$ is orientable resp. non-orientable. 

\begin{lem}\label{lem:localsystemcomputation}
$\bbF^\ell$ lifts to a framed flow category $\bbF^{{\bbS},\ell,\Lambda_\std}$ satisfying 
    \begin{equation}\label{eqn:aux3}
    H_*(\frakF^{\bbS,\ell,\Lambda_\std};\bbZ)\cong HF^{-*}(T^*Q;H^\ell,J^\ell;\eta^{-1}),\;\;\frakF^{\bbS,\ell,\Lambda_\std}\equiv{\rm Flow}^{\rm fr}(\unit,\bbF^{\bbS,\ell,\Lambda_\std}).
    \end{equation}
\end{lem}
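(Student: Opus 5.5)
The lift itself is already constructed in the preceding discussion: the twisted stable framings
\[
T\bbF^\ell(y,x)+\underline{\ind D}^\calA_x+\underline{\bbR}\cong\underline{\ind D}^\calA_y
\]
come from Lemma \ref{lem:symmetrybreaking}, and they extend over the Gromov compactification compatibly with gluing. To make this a framed flow category in the sense of the definition, I will set $V_x\equiv\ind D^\calA_x$, a virtual vector space of virtual dimension $-\deg(x)$ up to a universal shift. I will take $I(y,x)$ to be the trivial bundle. The associativity of the framing equivalences then follows from Proposition \ref{prop:indexfloer}, together with the fact that parallel transport along $s\mapsto(s,1)$ is compatible with concatenation of cylinders along broken trajectories.

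So the real content is the homology computation \eqref{eqn:aux3}. Since $\flow^{\fr}\cong\spectra$, and the homology of $\flow^{\fr}(\unit,\bbX)$ is computed by the cellular chain complex, I plan to proceed as follows.
\begin{itemize}
\item Its generators in degree $\dim V_x$ are the groups $H_{\dim V_x}(S^{V_x};\bbZ)\cong o(V_x)$, the orientation lines of the virtual spaces.
\item Its differential counts $0$-dimensional $\bbF^\ell(y,x)$ with signs determined by the framings.
\end{itemize}
This is the standard identification of \cite{AB24}, and I will assume it. Thus $H_*(\frakF^{\bbS,\ell,\Lambda_\std};\bbZ)$ is the homology of $\bigoplus_x o(\ind D^\calA_x)$, with differential given by signed counts using the gluing isomorphism of Lemma \ref{lem:symmetrybreaking}.

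Next I compare this with Abouzaid's complex \cite[Chapter 1--2]{Abo14}. There $CF^*(H^\ell;\eta^{-1})$ is generated by $o_x\otimes\eta^{-1}_{x}$, where $o_x$ is the orientation line of a cap with \emph{complex} asymptotics, or equivalently the determinant line of the Floer operator on the plane. The key step is a canonical isomorphism of graded lines
\[
o(\ind D^\calA_x)\cong o_x\otimes\eta^{-1}_{\pi\circ x},
\]
compatible with the gluing maps. To build it, I first glue $D^\calA_x$ to a disk operator with totally real boundary condition $x^*\Lambda_\std$ along the boundary circle $\{0\}\times S^1$. This identifies $\ind D^\calA_x$, up to a determined complex correction, with the index of a plane operator plus the index of a disk operator with boundary condition $x^*TQ$. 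The orientation line of the latter is exactly what \cite[Section 1.4]{Abo14} computes in terms of the following data:
\begin{itemize}
\item spin structures on $x^*(TQ\oplus\det(TQ)^{\oplus 3})$, giving $\sigma^{TQ}$;
\item the orientation of $T_{x(0)}Q$;
\item the correction $w(x)$ in the non-orientable case.
\end{itemize}
Here $x(0)$ enters precisely because of the basepoint $s\mapsto(s,1)$ fixed in Lemma \ref{lem:symmetrybreaking}, and this explains the factors $o^{-1}_{TQ}\vert_{\Gamma(0)}$ and $(o_{TQ}[\dim Q])^{\otimes -w}$ in \eqref{eqn:localsystem}.

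Finally, I must check that the signs in the two differentials agree. Both are induced by linear gluing of determinant lines; the framed one transports orientations by parallel transport along the fixed line, and the one with $\eta^{-1}$ coefficients uses the transport of $\eta$ along the same path. Naturality of the gluing isomorphisms in \cite[Section 1.4]{Abo14} with respect to boundary gluing should make them agree.

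I expect the main obstacle to be getting the local system exactly right, including degrees, in the non-orientable case. Concretely, I must verify that the disk-capping isomorphism produces $\eta^{-1}$ rather than $\eta$, and that the $w(x)$ shift matches $\deg(x)$. I would first settle the orientable case, where Lemma \ref{lem:nonsymmetrybreaking} gives a sanity check. I would then handle non-orientable $x^*TQ$ by an explicit computation on the Möbius-type boundary condition over the disk.
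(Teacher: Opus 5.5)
Your proposal is correct and follows the paper's skeleton. The lift uses the framings of Lemma \ref{lem:symmetrybreaking} with $V_x=\ind D^\calA_x$ and trivial $I$. Base change to $H\bbZ$ then gives a complex on the lines $\det D^\calA_x$, with differential from rigid trajectories. Everything reduces to identifying $\det D^\calA_x$ with $\frako_x$ twisted by $\eta$, compatibly with gluing. Where you differ is the degeneration used for that identification.

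The paper caps $D^\calA_x$ at its \emph{cylindrical end}. It glues the positive end of the half-cylinder to the plane operator $D_x$ with a negative end, which is the very operator defining $\frako_x$. This is ordinary linear gluing and directly produces a disk operator with totally real boundary condition $x^*\Lambda_\std$. Hence $\det D^\calA_x\otimes\frako_x\cong\det(D^\calA_x\#D_x)\cong\eta^{-1}_x[-w(x)]$ by \cite[Lemma 4.3.1]{Abo14}, and the same lemma is cited again for compatibility of the differentials.

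You cap the \emph{boundary circle} instead, which costs three extra steps:
\begin{enumerate}
\item a splitting of determinant lines along a totally real circle, obtained by deforming the matching condition, viewed as the diagonal in $E\oplus\overline{E}$, to $F\oplus F$, in the spirit of Lemma \ref{lem:nonsymmetrybreaking};
\item a comparison of the resulting positive-end plane with the negative-end plane defining $\frako_x$, which is a complex correction since the two glue to a sphere;
\item your own orientable/M\"obius computation of the disk orientation, which the paper simply imports from Abouzaid.
\end{enumerate}
The two routes agree up to canonical complex corrections, since your disk and $D^\calA_x\#D_x$ glue along the circle to a sphere. Yours is longer, but more self-contained. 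It also displays $\ind D^\calA_x$ as a complex cap corrected by a term depending only on the loop $\pi\circ x$.

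One bookkeeping warning for the obstacle you flagged: boundary gluing gives $\ind(\text{plane})\cong\ind D^\calA_x+\ind(\text{disk})$, so $\ind D^\calA_x$ is the plane \emph{minus} the disk, not plus. That sign decides $\eta$ versus $\eta^{-1}$ and the direction of the $w(x)$-shift. So does the passage from a relation for $\det D^\calA_x\otimes\frako_x$ to a formula for $\det D^\calA_x$ alone (the paper records it as $\frako_x[w(x)]\otimes\eta_x$).
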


\begin{proof}
Any rigid $u$ connecting $x$ to $y$ determines an isomorphism of determinant lines of the form
    \begin{equation}\label{eqn:aux4}
    \operatorname{det}_\bbR D^\calA_x\cong\operatorname{det}_\bbR D^\calA_y.
    \end{equation}
Moreover, under the base-change functor $-\otimes_\bbS H\bbZ:{\rm Sp}\to{\rm Mod}_{H\bbZ}$, we see that $\frakF^{\bbS,\ell,\Lambda_\std}$ is sent to the cochain complex $\mathcal{C}^{-*}$, where
    \begin{equation}
    \mathcal{C}^j\equiv\bigoplus_{x\in\chi(H^\ell),\deg(x)=j}\bbZ\langle x\rangle
    \end{equation}
with codifferential induced in the usual way by \eqref{eqn:aux4}. The key point is that the determinant line of $D^\calA_x$ is precisely 
    \begin{equation}
    \frako_x[w(x)]\otimes\eta_x,
    \end{equation}
where $\frako_x$ is the usual determinant line associated to $x$, cf. \cite[Definition 1.4.19]{Abo14}; we may verify this as follows.

Endow $\bbC $ with a negative cylindrical end and consider an operator 
    \begin{equation}
    D_x:W^{k,p}(\bbC;\bbC^{\dim Q})\to W^{k,p}(\bbC;\bbC^{\dim Q})
    \end{equation}
with asymptotic operator $D(\overline{\partial}_{H^\ell,J^\ell})_x$ at $-\infty$; recall, $\frako_x\equiv\operatorname{det}_\bbR(D_x)$. Now, by considering the glued together operator $D^\calA_x\#D_x$, and utilizing \cite[Lemma 4.3.1]{Abo14}, we see that 
    \begin{align}
    \operatorname{det}_\bbR(D^\calA_x\#D_x)\cong\operatorname{det}_\bbR D^\calA_x\otimes\frako_x\cong \eta^{-1}_x[-w(x)]&\implies \\
    \operatorname{det}_\bbR D^\calA_x\cong\frako_x[w(x)]\otimes\eta_x&.
    \end{align}
 
Also, identifying \eqref{eqn:aux4} with the isomorphism 
    \begin{equation}
    \frako_x[w(x)]\otimes\eta_x\cong\frako_y[w(y)]\otimes\eta_y,
    \end{equation}
induced by a rigid $u$ connecting $x$ to $y$, will again utilize \cite[Lemma 4.3.1]{Abo14}; the lemma follows.
\end{proof}

Moreover, analogously to before, there is a standard way to construct twisted stable framings: 
    \begin{equation}
    T\frakc_{\ell,\ell+1}(x,y)+\underline{\ind D}^\calA_y\cong\underline{\ind D}^\calA_x.
    \end{equation}
In particular, we have constructed a framed flow bimodule 
    \begin{equation}
    \frakc^\bbS_{\ell,\ell+1}:\bbF^{\bbS,\ell,\Lambda_\std}\to\bbF^{\bbS,\ell+1,\Lambda_\std},
    \end{equation}
hence a map
    \begin{equation}
    \frakc^\bbS_{\ell,\ell+1}:\frakF^{\bbS,\ell,\Lambda_\std}\to\frakF^{\bbS,\ell+1,\Lambda_\std}
    \end{equation}
which agrees, on integral homology, with the usual Floer continuation map. 

\begin{prop}
We have that $\frakF^{\bbS,\Lambda_\std}\equiv\varinjlim_\ell\frakF^{\bbS,\ell,\Lambda_\std}$ satisfies
    \begin{equation}
    H_*(\frakF^{\bbS,\Lambda_\std};\bbZ)\cong SH^{-*}(T^*Q;\eta^{-1}),
    \end{equation}
where the colimit is over a family of increasing slope.
\end{prop}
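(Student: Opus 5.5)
The plan is the standard homological argument, run at the level of the framed flow categories. For each slope, $\frakF^{\bbS,\ell,\Lambda_\std}$ is a finite colimit, so $\frakF^{\bbS,\Lambda_\std}=\varinjlim_\ell\frakF^{\bbS,\ell,\Lambda_\std}$ is a sequential colimit along the framed continuation bimodules $\frakc^\bbS_{\ell,\ell+1}$. We would compute $H_*$ of this colimit by commuting homology with the filtered colimit: $H_*$ of a sequential colimit of spectra is the colimit of the $H_*$. Applying Lemma \ref{lem:localsystemcomputation} at each level then gives
\begin{equation*}
H_*(\frakF^{\bbS,\Lambda_\std};\bbZ)\cong\varinjlim_\ell H_*(\frakF^{\bbS,\ell,\Lambda_\std};\bbZ)\cong\varinjlim_\ell HF^{-*}(T^*Q;H^\ell,J^\ell;\eta^{-1}).
\end{equation*}

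The key step is to check that the maps in this direct system are the usual continuation maps on Floer cohomology with twisted coefficients $\eta^{-1}$. Under $-\otimes_\bbS H\bbZ$, the bimodule $\frakc^\bbS_{\ell,\ell+1}$ becomes the chain map that counts rigid continuation cylinders $u\in\frakc_{\ell,\ell+1}(x,y)$. Each such cylinder is weighted by the isomorphism $\operatorname{det}_\bbR D^\calA_y\cong\operatorname{det}_\bbR D^\calA_x$ coming from the twisted stable framing $T\frakc_{\ell,\ell+1}(x,y)+\underline{\ind D}^\calA_y\cong\underline{\ind D}^\calA_x$. We would repeat the argument in the proof of Lemma \ref{lem:localsystemcomputation} with continuation cylinders in place of Floer trajectories. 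Concretely, glue $D^\calA$ to the capping operators $D_x$ and $D_y$ and invoke \cite[Lemma 4.3.1]{Abo14}. This identifies the weight with the map $\frako_y[w(y)]\otimes\eta_y\cong\frako_x[w(x)]\otimes\eta_x$ that Abouzaid's continuation map induces with $\eta^{-1}$-twisted coefficients. Hence the $H\bbZ$-reduction of $\frakc^\bbS_{\ell,\ell+1}$ is the classical continuation chain map, so the induced maps on homology agree; the paper already records this as the statement that the continuation maps agree on integral homology.

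Finally, for monotone continuation data with slopes $\tau_\ell\to+\infty$, the colimit of $HF^{-*}(T^*Q;H^\ell,J^\ell;\eta^{-1})$ along continuation maps is by definition $SH^{-*}(T^*Q;\eta^{-1})$. The main obstacle is the sign and orientation bookkeeping in the middle step. The twisted framing was built via the symmetry-breaking choice in Lemma \ref{lem:symmetrybreaking}, and for continuation cylinders we must check that parallel transport along $s\mapsto(s,1)$ is compatible with Abouzaid's local system $\eta$. This includes the degree shift $w(\cdot)$ when $x^*TQ$ and $y^*TQ$ differ in orientability, where $x$ and $y$ are free-homotopic and so have the same $w$. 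We would handle this as in Lemma \ref{lem:localsystemcomputation}: reduce to the gluing isomorphism on determinant lines over a cap and compare with \cite[Lemma 4.3.1]{Abo14}.
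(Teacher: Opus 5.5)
Your proposal is correct and follows the paper's own (implicit) argument. Homology commutes with the sequential colimit, Lemma~\ref{lem:localsystemcomputation} identifies each level with $HF^{-*}(T^*Q;H^\ell,J^\ell;\eta^{-1})$, and the framed continuation bimodules induce the usual twisted continuation maps on $H_*(-;\bbZ)$, so the colimit is $SH^{-*}(T^*Q;\eta^{-1})$ by definition. One small point: your worry about $w(x)\neq w(y)$ never arises, since a continuation cylinder is a free homotopy and so forces $w(x)=w(y)$, as you yourself observe.
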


\section{Spectral Viterbo isomorphism}
We now relate string topology and Floer theory; this section follows, and lifts to spectra, \cite[Chapter 4]{Abo14}.

\subsection{Unstructured}\label{subsec:unstructured}
We will begin by first constructing the Viterbo flow bimodule as an unstructured flow biomodule, i.e., defining the relevant moduli spaces. The subsequent subsections will show the aforementioned unstructured flow bimodule constructed admits (1) a (spherical) complex-oriented lift and (2) a framed lift. 

\subsubsection{Viterbo moduli spaces}
Let $(H^\ell,J^\ell)$ be regular Floer data. We choose a non-degenerate $\bbR_{\geq0}$-dependent family of Hamiltonians $H^+_s\in C^\infty(S^1\times T^*Q)$, which are linear at infinity of the same slope as $H^\ell$ and agree with $H^\ell$ near $+\infty$, such that $X_{H^+_0}\vert_Q=0$. For any $x\in\chi(H^\ell)$, we denote by $\scrU_\ell(x)$ the moduli space of smooth half-cylinders $u:\bbR_{\geq0}\times S^1\to T^*Q$ satisfying
    \begin{equation}
    \begin{cases}
    \big(du-dt\otimes X_{H^+_s})^{0,1}=0 \\
    \lim_{s\to+\infty}u(s,t)=x(t) \\
    u(0,t)\in Q.
    \end{cases}
    \end{equation}
We define 
    \begin{align}
    \eval:\scrU_\ell(x)&\to\calL Q \\
    u&\mapsto\big(t\mapsto u(0,t)\big). \nonumber
    \end{align}
By viewing $\scrU_\ell(x)$ as the zero set of a section of an appropriate Banach bundle and investigating the corresponding family of Fredholm operators,
    \begin{equation}
    D^{\scrU_\ell}\equiv\big\{D^{\scrU_\ell}_u\big\}_u,
    \end{equation}
which are surjective for generic data, we see it is a smooth manifold whose tangent bundle is classified by $\ind D^{\scrU_\ell}$.

Now, consider the map 
    \begin{align}
    \eval_r:\scrU_\ell(x)&\to Q^r \\
    u&\mapsto\Big(\eval(u)(0),\ldots,\eval(u)\big((r-1)/r\big)\Big); \nonumber
    \end{align}
this has image contained in $\calL^rQ$ for $r\gg0$. Moreover, we have a homotopy commutative diagram 
    \begin{equation}
    \begin{tikzcd}[column sep=large]
    \scrU_\ell(x)\arrow[d,"\eval"]\arrow[r,"\eval_r"]\arrow[dr,"\eval_{r+1}"] & \calL^rQ\arrow[d,"i_{r,r+1}"] \\ 
    \calL Q & \calL^{r+1}Q\arrow[l,"{\rm geo}"],
    \end{tikzcd}
    \end{equation}
cf. \cite[Exercise 3.2.3]{Abo14}. Finally, $\eval_r$ is, for generic data, transverse to all stable manifolds of $f^r$. In particular, we may define the hybrid moduli space $\scrU_{\ell,r}(x,a)\equiv\eval_r^{-1}\big(W^s(a;f^r)\big)$; this has a natural Gromov-compactification $\bbU_{\ell,r}(x,a)$ which is a compact smooth manifold with corners, stratified by breakings at critical points resp. Hamiltonian orbits, whose codimension 1 boundary strata are enumerated by gluing maps of the form 
    \begin{align}
    \bbF^\ell(x,x')\times\bbU_{\ell,r}(x',a)&\hookrightarrow\bbU_{\ell,r}(x,a), \\
    \bbU_{\ell,r}(x,a')\times\bbM^r(a',a)&\hookrightarrow\bbU_{\ell,r}(x,a).
    \end{align}
(These statements follow by the techniques in \cite[Section 6]{Lar21}; moreover, we have the analogous result to Proposition \ref{prop:indexfloer} via the techniques in \cite[Section 7]{Lar21}.) In particular, we have constructed an unstructured flow biomodule 
    \begin{equation}
    \bbU_{\ell,r}:\bbF^\ell\to\bbM^r.
    \end{equation}
It remains to show compatibility with the continuation resp. inclusion unstructured flow bimodules; the proof of both compatibilities is similar.

\subsubsection{Compatibility with directed systems}
We consider the inclusions. Compare the following with \cite[Part 4.3.4]{Abo14}.

\begin{prop}\label{prop:unstructuredinclusion}
The unstructured flow bimodules $\bbI_{r,r+1}\circ\bbU_{\ell,r}$ and $\bbU_{\ell,r+1}$ are homotopic as unstructured flow bimodules.
\end{prop}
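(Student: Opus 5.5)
The plan is to build an explicit homotopy of unstructured flow bimodules, i.e.\ an unstructured flow $2$-simplex on the triple $(\chi(H^\ell),\crit(f^r),\crit(f^{r+1}))$. Its faces $\partial^1$ and $\partial^0,\partial^2$ must recover, respectively, $\bbU_{\ell,r+1}$ and the composite $\bbI_{r,r+1}\circ\bbU_{\ell,r}$. This follows \cite[Part 4.3.4]{Abo14}. Recall that the composite bimodule has morphism spaces
\[
\big(\bbI_{r,r+1}\circ\bbU_{\ell,r}\big)(x,b)=\bigcup_{a}\bbU_{\ell,r}(x,a)\times\bbI_{r,r+1}(a,b)
\]
glued along the common boundary strata. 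Up to compactification, this is the fibre product of $\eval_r$ with the stable manifolds of $f^r$, followed by flowing $i_{r,r+1}$ of unstable manifolds of $f^r$ into stable manifolds of $f^{r+1}$. The comparison rests on the homotopy commutative triangle relating $\eval_r$, $\eval_{r+1}$ and $i_{r,r+1}$: the homotopy is the straight-line interpolation in $\calL^{r+1}Q$ between $i_{r,r+1}\circ\eval_r$ and $\eval_{r+1}$, which stays in $\calL^{r+1}Q$ for $r\gg0$ since the sampled points are close. Call this interpolation $\eval_{r,r+1}^\lambda$, $\lambda\in[0,1]$.

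For $x\in\chi(H^\ell)$ and $b\in\crit(f^{r+1})$, define the hybrid moduli space of triples $(u,\lambda,T)$. Here $u\in\scrU_\ell(x)$ and $\lambda$ parametrizes the interpolation, so that $\eval^\lambda_{r,r+1}(u)\in W^s(b;f^{r+1})$ at $\lambda=1$. Near $\lambda=0$ we instead stretch a neck: we require $\eval_r(u)$ to flow for time $T\in[0,\infty]$ under $-\grad f^r$, and then the image under $i_{r,r+1}$ to lie in $W^s(b;f^{r+1})$. Concretely, the space is the union over $\lambda\in[0,1]$ of the fibre products $\eval^\lambda_{r,r+1}(u)\in W^s(b;f^{r+1})$, concatenated at $\lambda=0$ with the family parameterized by $T\in[0,\infty)$ of $\phi^T_{f^r}(\eval_r u)$ with $i_{r,r+1}\phi^T_{f^r}(\eval_r u)\in W^s(b;f^{r+1})$. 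As $T\to\infty$, this space degenerates, by the usual Morse gluing, to the union of the products $\bbU_{\ell,r}(x,a)\times\bbI_{r,r+1}(a,b)$. Its Gromov compactification $\bbX(x,b)$ has codimension $1$ boundary strata consisting of
\begin{itemize}
\item the $\lambda=1$ end, identified with $\bbU_{\ell,r+1}(x,b)$;
\item the $T=\infty$ end, identified with the composite;
\item the breaking strata $\bbF^\ell(x,x')\times\bbX(x',b)$ and $\bbX(x,b')\times\bbM^{r+1}(b',b)$.
\end{itemize}
This is exactly the stratification required by $\vec{\calP}(x,b)$ for the triple, with vertex label $\{1\}$ recording the $\partial^1$ face. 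The remaining $2$-simplex data are the already-constructed faces $\bbU_{\ell,r}$, $\bbI_{r,r+1}$ and $\bbU_{\ell,r+1}$.

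The steps, in order, are as follows. First, choose generic data (the interpolation, $f^r$, $f^{r+1}$) so that every parametrized evaluation map is transverse to the relevant stable manifolds; this uses parametric transversality as for $\eval_r$ earlier. Second, prove compactness and identify the limits; the Floer breakings and Morse breakings are handled as for $\bbU_{\ell,r}$ via \cite[Section 6]{Lar21}. Third, construct the smooth corner structure, including the associativity of the gluing maps. Fourth, check that the face inclusions commute with the concatenation functors of $\vec{\calP}$.

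The main obstacle is the third step at the $T=\infty$ end. There, Morse gluing for the neck flow line at critical points $a$ of $f^r$ must be done compatibly with the simultaneous Floer breakings and with $\lambda$, so that $\bbX(x,b)$ is genuinely a $\langle k\rangle$-manifold. I would first reparametrize $T$ by $e^{-T}$ and use the gluing charts for $\bbI_{r,r+1}$ and $\bbU_{\ell,r}$ separately. I would then invoke the same corner-chart framework of \cite[Sections 6--7]{Lar21}, noting that the neck is a finite-dimensional Morse flow, so its gluing is standard and fibered over the Floer part.
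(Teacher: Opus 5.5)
Your proposal is correct and follows the paper's proof. The paper also builds the flow $2$-simplex from the flow-time family $(s,u)\mapsto (i_{r,r+1}\circ\phi^r_s\circ\eval_r)(u)$, $s\in[0,+\infty]$, whose $s=+\infty$ end degenerates by Morse breaking at critical points of $f^r$ to the composite $\bbI_{r,r+1}\circ\bbU_{\ell,r}$.

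Your extra $\lambda$-collar, interpolating $i_{r,r+1}\circ\eval_r$ to $\eval_{r+1}$, is a useful refinement. The paper identifies the $s=0$ face directly with $\bbU_{\ell,r+1}(x,a)$, which is literally true only after this homotopy from the homotopy-commutative triangle.
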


\begin{proof}
The proof of the proposition amounts to constructing an unstructured flow 2-simplex 
    \begin{equation}
    \bbU^\bbI:\bbI_{r,r+1}\circ\bbU_{\ell,r}\Rightarrow\bbU_{\ell,r+1}.
    \end{equation}
Let $\phi^r_s$ denote the flow of $-\grad f^r$ and consider the map
    \begin{align}
    \eval_{\bbR,r}:\scrU^\bbI(x)\equiv\bbR\times\scrU_\ell(x)&\to\calL^{r+1}Q \\
    (s,u)&\mapsto(i_{r,r+1}\circ\phi^r_s\circ\eval_r)(u). \nonumber
    \end{align}
Observe, $\scrU^\bbI(x,a)\equiv\eval_{\bbR,r}^{-1}\big(W^s(a;f^{r+1})\big)$ has a natural Gromov-compactification $\bbU^\bbI(x,a)$ which is a compact smooth manifold with corners, given by allowing breakings at critical points resp. Hamiltonian orbits, whose other two codimension 1 boundary strata are given by the boundary at $s=0$ resp. $s=+\infty$: 
    \begin{equation}
    \bbU_{\ell,r+1}(x,a)\;\;{\rm resp.}\;\;\Bigg(\coprod_{a'\in\crit(f^r)}\bbU_{r,\ell}(x,a')\times\bbI_{r,r+1}(a',a)\Bigg)\Bigg/\sim, 
    \end{equation}
where the equivalence relation identifies the various images of the form
    \begin{equation}
    \begin{tikzcd}[column sep=-10ex]
    & \bbU_{\ell,r}(x,a')\times\bbM^r(a',b)\times\bbI_{r,r+1}(b,a)\arrow[dr]\arrow[dl] & \\
    \bbU_{\ell,r}(x,b)\times\bbI_{r,r+1}(b,a) & & \bbU_{\ell,r}(x,a')\times\bbI_{r,r+1}(a',a).
    \end{tikzcd}
    \end{equation}
(Again, these statements follow by the techniques in \cite[Section 6]{Lar21}; moreover, we have the analogous result to Proposition \ref{prop:indexfloer} via the techniques in \cite[Section 7]{Lar21}.) This completes the construction of the desired unstructured flow 2-simplex.
\end{proof}

We consider the continuations. Compare the following with \cite[Part 4.3.5]{Abo14}.

\begin{prop}\label{prop:unstructuredcontinuation}
The unstructured flow bimodules $\bbU_{\ell+1,r}\circ\frakc_{\ell,\ell+1}$ and $\bbU_{\ell,r}$ are homotopic as unstructured flow bimodules.
\end{prop}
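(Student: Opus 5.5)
The plan is to build an unstructured flow 2-simplex
\begin{equation}
\bbU^\frakc:\bbU_{\ell+1,r}\circ\frakc_{\ell,\ell+1}\Rightarrow\bbU_{\ell,r},
\end{equation}
following the proof of Proposition \ref{prop:unstructuredinclusion}, with the Morse flow parameter replaced by a gluing/stretching parameter for the half-cylinder equation. First choose a one-parameter family of $\bbR_{\geq0}$-dependent Hamiltonians $H^{+,R}_s$, $R\in[0,+\infty)$, together with almost complex structures $J^{+,R}_s$, all linear at infinity with $X_{H^{+,R}_0}\vert_Q=0$. At $R=0$ the family is the datum $H^+_s$ used to define $\scrU_\ell$. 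As $R\to+\infty$ it converges, after translation, to the concatenation of the monotone continuation datum $(H_s,J_s)$ with the half-cylinder datum used to define $\scrU_{\ell+1}$; the two pieces are separated by a neck of length $\sim R$ on which the data agree with $(H^{\ell+1},J^{\ell+1})$. Because $(H_s,J_s)$ is monotone (decreasing slope) and every member of the family is linear at infinity, each member is monotone in $s$ near infinity. The integrated maximum principle of \cite[Chapter 2]{Abo14} then gives a priori $C^0$ bounds uniform in $R$.

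Next, for $x\in\chi(H^\ell)$, define the parametrized moduli space $\scrU^\frakc(x)$ of pairs $(R,u)$ in which $u$ solves the half-cylinder equation for $(H^{+,R}_s,J^{+,R}_s)$ with boundary on $Q$ and asymptotic to $x$. Evaluating at $r$ equally spaced points gives $\eval_{r}:\scrU^\frakc(x)\to\calL^rQ$ for $r\gg0$. We can take $r$ large uniformly, because an energy bound holds uniformly in $R$: it comes from exactness of $T^*Q$ and $Q$, together with monotonicity. This bound controls the lengths of the boundary loops, so that their consecutive sample points stay within the $\delta^r_i$. For generic data, $\eval_r$ on the parametrized space is transverse to the stable manifolds $W^s(a;f^r)$, and we set $\scrU^\frakc(x,a)\equiv\eval_r^{-1}(W^s(a;f^r))$.

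The Gromov compactification $\bbU^\frakc(x,a)$ should then be a compact manifold with corners whose codimension 1 boundary strata are of four kinds:
\begin{itemize}
\item Floer breakings $\bbF^\ell(x,x')\times\bbU^\frakc(x',a)$ at the input end;
\item Morse breakings $\bbU^\frakc(x,a')\times\bbM^r(a',a)$;
\item the face $R=0$, which is $\bbU_{\ell,r}(x,a)$;
\item the face $R=+\infty$, which is $\big(\coprod_{y}\frakc_{\ell,\ell+1}(x,y)\times\bbU_{\ell+1,r}(y,a)\big)/\sim$, where $\sim$ identifies the two images of $\frakc_{\ell,\ell+1}(x,y')\times\bbF^{\ell+1}(y',y)\times\bbU_{\ell+1,r}(y,a)$, exactly as in the composite bimodule.
\end{itemize}
The smooth corner structure and the analogue of Proposition \ref{prop:indexfloer} for the index bundles come from the techniques of \cite[Sections 6--7]{Lar21}. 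These are applied to the parametrized problem, with the neck-stretching end handled like a breaking.

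The main obstacle is the $R=+\infty$ face. We must show that the gluing map from the quotient above onto a collar of $R=+\infty$ is a homeomorphism onto its image, and that this image is a smooth codimension 1 face compatible with all higher corners. The difficulty is that at $R=+\infty$ the broken configurations can break further at intermediate orbits $y'$, and the corners created by neck-stretching must match the corners of the composite flow bimodule $\bbU_{\ell+1,r}\circ\frakc_{\ell,\ell+1}$ as defined in \cite[Section 4]{AB24}. To handle this, I would treat $R$ as an extra breaking coordinate: reparametrize by $e^{-R}$ and glue the continuation and half-cylinder pieces as in the standard proof that continuation maps compose. With a uniform exponential-decay estimate across the neck, the required corner charts reduce to those already produced for broken Floer trajectories.
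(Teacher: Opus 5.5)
Your proposal is correct and is essentially the paper's proof. The paper also builds the flow 2-simplex $\bbU^\frakc$ from a one-parameter family of half-cylinder data, parametrized by $\mu\in\bbR$ instead of your $R\in[0,+\infty)$, interpolating with non-increasing slope between $(H^+_s,J^+_s)$ and the glued continuation/half-cylinder data. It then cuts down by $\eval_r^{-1}\big(W^s(a;f^r)\big)$, lists the same four types of codimension 1 boundary strata (your $R=+\infty$ face written with $\bbU_{\ell+1,r}$ is the correct form), and defers the corner structure to \cite[Sections 6--7]{Lar21}.

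Two small corrections. First, the uniform choice of $r$ comes from compactness of the parametrized moduli space, not from the energy bound alone: the $C^0$ and energy bounds plus exactness rule out bubbling, and this gives uniform $C^1$ control of the boundary loops. Second, slope monotonicity of each member of the family is a condition you must impose, as the paper does, rather than a consequence of the endpoint data.
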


\begin{proof}
Again, the proof of the proposition amounts to constructing an unstructured flow 2-simplex 
    \begin{equation}
    \bbU^\frakc:\bbU_{\ell+1,r}\circ\frakc_{\ell,\ell+1}\Rightarrow\bbU_{\ell,r}.
    \end{equation}
Let $(H^+_s,J^+_s)$ be data used to define the $\scrU_\ell(x)$'s. Moreover, let $(H_s,J_s)$ be monotonic Floer continuation data connecting $(H^{\ell+1},J^{\ell+1})$ to $(H^\ell,J^\ell)$. Now, consider a $\bbR$-dependent family of regular Floer data $(H_{\mu,s},J_{\mu,s})$ on the glued surface $(\bbR_{\geq0}\times S^1)\#(\bbR\times S^1)$ limiting to $(H^+_s,J^+_s)$ at $\mu=-\infty$ resp. the gluing of $(H^+_s,J^+_s)$ to $(H_s,J_s)$ at $\mu=+\infty$. We assume $J_{\mu,s}$ is convex near $S^*Q$; moreover, we assume $H_{\mu,s}$ does not increase the slope in the positive $s$-direction. For any $x\in\chi(H^\ell)$, we denote by $\scrU^\frakc(x)$ the moduli space of tuples $(\mu,u)$, where $u:\bbR\times S^1\to T^*Q$ satisfies 
    \begin{equation}
    \begin{cases}
    \big(du-dt\otimes X_{H_{\mu,s}})^{0,1}=0 \\
    \lim_{s\to+\infty}u(s,t)=x(t) \\
    u(0,t)\in Q.
    \end{cases}
    \end{equation}
For generic data, this is a smooth manifold. There is a natural evaluation map 
    \begin{equation}
    \eval_r:\scrU^\frakc(x)\to\calL^rQ,\;\;r\gg0,
    \end{equation}
and we define the hybrid moduli space $\bbU^\frakc(x,a)$ given by the natural Gromov-compactification of $\scrU^\frakc(x,a)\equiv\eval_r^{-1}\big(W^s(a;f^r)\big)$ defined by allowing breakings at critical points resp. Hamiltonian orbits; this, for generic data, is a compact smooth manifold with corners whose codimension 1 boundary strata are enumerated by (1) gluing maps for Morse resp. Floer breakings and (2) the boundary at $\mu=-\infty$ resp. $\mu=+\infty$:
    \begin{equation}
    \bbU_{\ell,r}(x,a)\;\;{\rm resp.}\;\;\Bigg(\coprod_{x'\in\chi(H^{\ell+1})}\frakc_{\ell,\ell+1}(x,x')\times\bbU^\frakc(x',a)\Bigg)\Bigg/\sim,
    \end{equation}
where the equivalence relation identifies the various images of the form
    \begin{equation}
    \begin{tikzcd}[column sep=-10ex]
    & \frakc_{\ell,\ell+1}(x,x')\times\bbF^{\ell+1}(x',y)\times\bbU^\frakc(y,a)\arrow[dr]\arrow[dl] & \\
   \frakc_{\ell,\ell+1}(x,y)\times\bbU^\frakc(y,a) & & \frakc_{\ell,\ell+1}(x,x')\times\bbU^\frakc(x',a).
    \end{tikzcd}
    \end{equation}
(Again, these statements follow by the techniques in \cite[Section 6]{Lar21}; moreover, we have the analogous result to Proposition \ref{prop:indexfloer} via the techniques in \cite[Section 7]{Lar21}) This completes the construction of the desired unstructured flow 2-simplex.
\end{proof}

\subsection{With coefficients in complex bordism}
We will now construct the complex-oriented lift of Subsection \ref{subsec:unstructured}.

\subsubsection{Construction of the ${\rm MU}$-local system}
In this part, we will construct the MU-local system mentioned in Theorem \ref{thm:mainMU}. We may assume that, given any $r$ and $a\in\crit(f^r)$, our Hermitian connection $\nabla$ on $T(T^*Q)$ is flat in a neighborhood of the image of ${\rm geo}(a)\in\calL Q$. The construction will proceed in stages, as follows.

First, consider any $a\in\crit(f^r)$. We define 
    \begin{align}
    \widetilde{a}:\bbR_{\leq0}\times S^1&\to T^*Q \\
    (s,t)&\mapsto{\rm geo}(a)(t). \nonumber
    \end{align}
By assumption, $\widetilde{a}^*T(T^*Q)$ is equipped with a complex trivialization at $-\infty$ via $\widetilde{a}^*\nabla$; hence, we may choose a Cauchy-Riemann operator
    \begin{equation}
    D^\calV_a:W^{k,p}\big(\bbR_{\leq0}\times S^1;\widetilde{a}^*T(T^*Q),a^*TQ\big)\to W^{k-1,p}\big(\bbR_{\leq0}\times S^1;\widetilde{a}^*T(T^*Q)\big)
    \end{equation}
which, over the negative cylindrical end, is standard.\footnote{Technically, in order to deal with 0 being an eigenvalue of the asymptotic operator on the negative cylindrical end, we should work with weighted Sobolev spaces so that $D^\calV_a$ is actually Fredholm; we will ignore this point for brevity.} We define
    \begin{equation}
    \frakV_a\equiv(\ind D^\calV_a)^\bbS\otimes_\bbS{\rm MU}\in{\rm Mod}_{\rm MU};
    \end{equation}
observe, $\frakV_a$ is homotopy equivalent to MU --- but, not canonically so.

Second, consider any $\alpha\in\calL^rQ$. By construction, there exists a unique $a\in\crit(f^r)$ such that $\alpha\in W^s(a;f^r)$, and we define 
    \begin{equation}
    \frakV_\alpha\equiv\frakV_a.
    \end{equation}
We denote by $\widetilde{\Gamma}_{\alpha a}:\bbR_{\geq0}\to\calL^rQ$ the negative gradient trajectory satisfying 
    \begin{equation}
    \widetilde{\Gamma}_{\alpha a}(0)=\alpha,\;\;\lim_{s\to+\infty}\widetilde{\Gamma}_{\alpha a}(s)=a;
    \end{equation}
moreover, we denote by $\widetilde{\Gamma}_{a\alpha}:\bbR_{\leq0}\to\calL Q$ the path in $\calL^rQ$ satisfying 
    \begin{equation}
    \widetilde{\Gamma}_{a\alpha}(s)\equiv\widetilde{\Gamma}_{\alpha a}(-s);
    \end{equation}
finally, we define 
    \begin{equation}
    \Gamma_{\alpha a}\equiv{\rm geo}\circ\widetilde{\Gamma}_{\alpha a}\;\;{\rm and}\;\;\Gamma_{a\alpha}\equiv{\rm geo}\circ\widetilde{\Gamma}_{a\alpha}.
    \end{equation}

Third, we build an $\infty$-functor 
    \begin{equation}\label{eqn:infinityfunctoraux}
    \calV_r:\calL^rQ\to{\rm BGL}_1({\rm MU}), 
    \end{equation}
i.e., an MU-local system on $\calL^rQ$. Recall, ${\rm BGL}_1({\rm MU})$ may be viewed as the subcategory of ${\rm Mod}_{\rm MU}$ whose: 0-simplices are all MU-modules abstractly homotopy equivalent to MU, 1-simplices are homotopy equivalences, 2-simplices are homotopies between homotopy equivalences, etc. In particular, we must describe how an $n$-simplex in $\calL^rQ$ is mapped to an $n$-simplex in ${\rm BGL}_1({\rm MU})$ compatibly with face and degeneracy maps. 

\begin{itemize}
\item Let $\sigma:\Delta^0\to\calL^rQ$ be a 0-simplex; we define 
    \begin{equation}
    \calV_r\vert_{\sigma(0)}\equiv\frakV_{\sigma(0)}.
    \end{equation}
\item Let $\widetilde{\sigma}:\Delta^1\to\calL^rQ$ be a 1-simplex and consider the concatenated path 
    \begin{equation}
    \Gamma\equiv\Gamma_{b\sigma(1)}\#\sigma\#\Gamma_{\sigma(0)a}:\bbR\to\calL Q, 
    \end{equation}
where $\widetilde{\sigma}(0)\in W^s(a;f^r)$, $\widetilde{\sigma}(1)\in W^s(b;f^r)$, and $\sigma(s)\equiv({\rm geo}\circ\widetilde{\sigma})(-s)$. We may choose a (complex linear) Cauchy-Riemann operator
    \begin{equation}
    D^\calV_\Gamma:W^{k,p}\big(\bbR\times S^1;\Gamma^*T(T^*Q)\big)\to W^{k-1,p}\big(\bbR\times S^1;\Gamma^*T(T^*Q)\big)
    \end{equation}
which over the negative resp. positive cylindrical end agrees with the restriction of $D^\calV_b$ resp. $D^\calV_a$ to the negative cylindrical end. (We would like to emphasize that such a choice lies in a contractible space of choices.) Utilizing essentially the same proof as that of Lemma \ref{lem:symmetrybreaking}, we have a canonical isomorphism of virtual vector spaces 
    \begin{equation}
    \ind(D^\calV_\Gamma\#D^\calV_a)\cong\ind D^\calV_b.
    \end{equation}
We define 
    \begin{equation}
    \calV_r(\widetilde{\sigma}):\calV_r\vert_{\widetilde{\sigma}(0)}\xrightarrow{\sim}\calV_r\vert_{\widetilde{\sigma}(1)}
    \end{equation}
as follows. By using the fact that we have a canonical homotopy equivalence
    \begin{equation}
    (\ind D^\calV_\Gamma)^\bbS\otimes_\bbS{\rm MU}\simeq{\rm MU},
    \end{equation}
since $D^\calV_\Gamma$ is a complex linear operator and ${\rm BU}\to{\rm BGL}_1({\rm MU})$ is canonically null-homotopic, we immediately obtain a canonical homotopy equivalence
    \begin{equation}
    \calV_r\vert_{\widetilde{\sigma}(0)}=\frakV_{\widetilde{\sigma}(0)}\xrightarrow{\sim}\frakV_{\widetilde{\sigma}(1)}=\calV_r\vert_{\widetilde{\sigma}(1)}.
    \end{equation}

\item Since, for any 1-simplex $\widetilde{\sigma}:\Delta^1\to\calL^rQ$, we constructed a canonical homotopy equivalence $\calV_r\vert_{\widetilde{\sigma}(0)}\simeq\calV_r\vert_{\widetilde{\sigma}(1)}$, this homotopy equivalence immediately extends to families of operators, i.e., we may inductively construct our desired $\infty$-functor \eqref{eqn:infinityfunctoraux}.
\end{itemize}

Fourth, we build an MU-local system on $\calL Q$.

\begin{prop}
We have the following commutative diagram of $\infty$-functors:
    \begin{equation}
    \begin{tikzcd}
    \calL^{r+1}Q\arrow[dr,"\calV_{r+1}"] & \\
    \calL^rQ\arrow[u,"i_{r,r+1}"]\arrow[r,"\calV_r"] & {\rm BGL}_1({\rm MU}).
    \end{tikzcd}
    \end{equation}
In particular, we may define 
    \begin{equation}
    \calV\equiv\varinjlim_r\calV_r:\calL Q\to{\rm BGL}_1({\rm MU}).
    \end{equation}
\end{prop}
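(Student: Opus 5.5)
Since $\calV_r$ was built simplex by simplex, the plan is to check the compatibility $\calV_{r+1}\circ i_{r,r+1}\simeq\calV_r$ on 0-simplices first, then on 1-simplices, and then extend to higher simplices by the same families argument used to build $\calV_r$. The one new ingredient is the flow bimodule $\bbI_{r,r+1}$ and its gradient data, which give a canonical way to move between the stable manifolds of $f^r$ and $f^{r+1}$.

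\emph{0-simplices.} Let $\alpha\in\calL^rQ$ lie in $W^s(a;f^r)$, and let $i_{r,r+1}(\alpha)$ lie in $W^s(b;f^{r+1})$. Then $\calV_r\vert_\alpha=\frakV_a$ and $\calV_{r+1}\vert_{i_{r,r+1}(\alpha)}=\frakV_b$, so we need an identification $\frakV_a\simeq\frakV_b$. I would form the concatenated path in $\calL Q$
\begin{equation}
\Gamma\equiv\Gamma_{b\,i_{r,r+1}(\alpha)}\#\Gamma_{\alpha a},
\end{equation}
using that ${\rm geo}\circ i_{r,r+1}={\rm geo}$ up to reparametrization, which is homotopic to the identity as in \cite[Exercise 3.2.3]{Abo14}. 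Over $\Gamma$ I would choose a complex linear Cauchy--Riemann operator $D^\calV_\Gamma$ with ends matching $D^\calV_b$ and $D^\calV_a$. Exactly as in the 1-simplex step of the construction of $\calV_r$ (that is, the proof of Lemma \ref{lem:symmetrybreaking}, transporting along $s\mapsto(s,1)$), this gives $\ind(D^\calV_\Gamma\#D^\calV_a)\cong\ind D^\calV_b$. The MU-trivialization of $(\ind D^\calV_\Gamma)^\bbS$, which comes from complex linearity and the null-homotopy of ${\rm BU}\to{\rm BGL}_1({\rm MU})$, then yields a canonical equivalence $\theta_\alpha:\frakV_a\xrightarrow{\sim}\frakV_b$. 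All choices made here lie in contractible spaces.

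\emph{1-simplices and higher.} For a 1-simplex $\widetilde\sigma$ in $\calL^rQ$ we must check that the square formed by $\calV_r(\widetilde\sigma)$, $\calV_{r+1}(i_{r,r+1}\circ\widetilde\sigma)$, $\theta_{\widetilde\sigma(0)}$ and $\theta_{\widetilde\sigma(1)}$ commutes up to a canonical homotopy. Both composites come from gluing complex linear operators along paths in $\calL Q$ with the same endpoints, and those paths are homotopic relative to their ends. The key fact I would use is that the canonical isomorphisms of Lemma \ref{lem:symmetrybreaking} are compatible with gluing: parallel transport along $s\mapsto(s,1)$ is functorial under concatenation. The MU-trivializations of complex index bundles are also multiplicative under gluing and depend continuously on parameters. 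A homotopy of paths therefore gives a family of complex linear operators over $[0,1]$, and hence a 2-simplex filling the square. Running the same argument over $\Delta^n$-families of operators inductively produces the higher coherences, giving a natural equivalence $\calV_{r+1}\circ i_{r,r+1}\simeq\calV_r$. Setting $\calV\equiv\varinjlim_r\calV_r$ then uses \eqref{eqn:fda}, namely $\varinjlim_r\calL^rQ\simeq\calL Q$.

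\emph{Expected obstacle.} The hard part is coherence. The isomorphisms from Lemma \ref{lem:symmetrybreaking} involve a non-canonical homotopy of totally real boundary conditions, made canonical only by transporting along $s\mapsto(s,1)$. For non-orientable $Q$ one must confirm that transport along composite paths agrees with composing the transports. I would first try to reduce everything to a single statement: these isomorphisms form a functor from the path groupoid of $\calL Q$. This holds because Levi-Civita parallel transport along the fixed line is strictly functorial under concatenation, and the remaining data is complex linear, hence canonically MU-trivial.
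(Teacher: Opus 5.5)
Your proposal is correct and follows the paper's proof. Like you, the paper builds the pointwise equivalence $\frakV_a\simeq\frakV_b$ from three ingredients:
\begin{itemize}
\item the concatenated path $\Gamma_{b\,i_{r,r+1}(\alpha)}\#\Gamma_{\alpha a}$;
\item the Lemma~\ref{lem:symmetrybreaking}-type identification $\ind(D^\calV_\Gamma\#D^\calV_a)\cong\ind D^\calV_b$;
\item the canonical MU-trivialization of the index of a complex linear operator.
\end{itemize}
It then simply asserts that a canonical equivalence extends to families. Your explicit treatment of the squares over 1-simplices (via homotopy of paths rel endpoints and functoriality of transport along $s\mapsto(s,1)$) spells out what the paper leaves implicit. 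So does your remark that ${\rm geo}\circ i_{r,r+1}$ agrees with ${\rm geo}$ only up to reparametrization. Note that the bimodule $\bbI_{r,r+1}$, which you announce as the new ingredient, is never actually used in your argument, and it is not needed.
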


\begin{proof}
For every $\alpha\in\calL^rQ$, we will construct a canonical homotopy equivalence 
    \begin{equation}
    \calV_r\vert_\alpha\simeq i_{r,r+1}^*\calV_{r+1}\vert_\alpha;
    \end{equation}
then, since the homotopy equivalence is canonical, it will immediately extend to families, whence the proposition.

By construction, there exists a unique $a\in\crit(f^r)$ and $b\in\crit(f^{r+1})$ such that $\alpha\in W^s(a;f^r)$ and $i_{r,r+1}(\alpha)\in W^s(b;f^{r+1})$. Consider the concatenated path 
    \begin{equation}
    \Gamma\equiv\Gamma_{bi_{r,r+1}(\alpha)}\#\Gamma_{\alpha a}.
    \end{equation}
Again, utilizing essentially the same proof as that of Lemma \ref{lem:symmetrybreaking}, we obtain a canonical identification
    \begin{equation}
    \ind(D^\calV_\Gamma\#D^\calV_a)\cong\ind D^\calV_b,
    \end{equation}
and thus a canonical homotopy equivalence 
    \begin{equation}
    \calV_r\vert_\alpha=\frakV_\alpha\xrightarrow{\sim}\frakV_{i_{r,r+1}(\alpha)}=i_{r,r+1}^*\calV_{r,r+1}\vert_a,
    \end{equation}
as desired.
\end{proof}

Finally, we build our desired MU-local system on $\calL Q$. Let
    \begin{equation}
    \epsilon:\calL Q\to{\rm BGL}_1({\rm MU})
    \end{equation}
be the local system defined as follows. We have a decomposition 
    \begin{equation}
    \calL Q=\calL_0\amalg\calL_1,
    \end{equation}
where $\calL_0$ resp. $\calL_1$ consists of loops $\Gamma$ such that $\Gamma^*TQ$ is orientable resp. non-orientable. On $\calL_0$, we define $\epsilon$ to be the trivial MU-local system $\underline{\rm MU}$. On $\calL_1$, we define $\epsilon$ to be the trivial MU-local system $\underline{\Sigma^{-1}{\rm MU}}$. It is the MU-local system 
    \begin{equation}
    \Sigma^{-\dim Q}\calV\otimes\epsilon:\calL Q\to{\rm BGL}_1({\rm MU})
    \end{equation}
we are ultimately interested in.

\subsubsection{Viterbo moduli spaces: complex-oriented}
As before, we consider the moduli spaces given by the $\bbU_{\ell,r}(x,a)$'s. We will produce a twisted spherical complex orientation of $\bbU_{\ell,r}(x,a)$, as follows.

\begin{lem}
We have a canonical isomorphism of ${\rm MU}$-local systems
    \begin{equation}
    \Big(\big(T\scrU_\ell(x)\big)^\bbS\otimes_\bbS{\rm MU}\Big)\otimes_{\rm MU}\eval_r^*\Sigma^{\deg(x)-\dim Q-w(x)}\calV_r\cong\underline{{\rm MU}}.
    \end{equation}
\end{lem}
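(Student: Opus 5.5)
Fix $u\in\scrU_\ell(x)$ with $\eval_r(u)=\alpha\in W^s(a;f^r)$. By construction, $\eval_r^*\calV_r\vert_u=\frakV_a=(\ind D^\calV_a)^\bbS\otimes_\bbS{\rm MU}$, and $T_u\scrU_\ell(x)=\ind D^{\scrU_\ell}_u$. The plan is to exhibit a complex-linear Cauchy--Riemann problem whose index, up to canonical identifications, is $\ind D^{\scrU_\ell}_u\oplus\ind D^\calV_a$ plus a correction of virtual rank $\deg(x)-\dim Q-w(x)$. The complex-linear part is then canonically trivialized after $\otimes_\bbS{\rm MU}$, because ${\rm BU}\to{\rm BGL}_1({\rm MU})$ is canonically null.

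The key steps are as follows. First, glue. Concatenate the half-cylinder $D^\calV_a$ (on $\bbR_{\leq0}\times S^1$, boundary condition $a^*TQ$ at $s=0$) with the path $\Gamma_{\alpha a}$ reversed, obtaining an operator $D^\calV_\alpha$ with totally real boundary condition $\alpha^*TQ=\eval(u)^*TQ$. Arguing as in Lemma~\ref{lem:symmetrybreaking}, we get $\ind D^\calV_\alpha\cong\ind D^\calV_a$ canonically, since the connecting cylinder along $\Gamma_{\alpha a}$ is complex linear with standard ends.

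Second, glue $D^\calV_\alpha$ to $D^{\scrU_\ell}_u$ along their common totally real boundary $\eval(u)^*TQ$. This produces an operator on the full cylinder $\bbR\times S^1$: its negative end is the standard trivialized end from $\nabla$, and its positive end is the asymptotic operator at $x$. The standard boundary-gluing short exact sequence gives
\[
\ind D^{\scrU_\ell}_u\oplus\ind D^\calV_\alpha\cong\ind\big(D^\calV_\alpha\#_\partial D^{\scrU_\ell}_u\big)\oplus \eval(u)(0)^*TQ,
\]
where the fiber correction arises from fixing the basepoint line $s\mapsto(s,1)$ as in Lemma~\ref{lem:symmetrybreaking}. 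This basepoint is where $w(x)$ enters in the non-orientable case.

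Third, the doubled operator $D^\calV_\alpha\#_\partial D^{\scrU_\ell}_u$ is a complex Cauchy--Riemann operator on $\bbR\times S^1$. Its positive end agrees with that of the Floer operator at $x$, and its negative end is trivial. We compare it with $D_x$ (the cap defining $\frako_x$) together with the conjugate-pairing trick from the construction of $I(y,x)$ in \eqref{eqn:analogouscomplex}. This shows its index is a complex virtual bundle plus a trivial real part of rank $\deg(x)-\dim Q-w(x)$, using $\deg(x)=\dim Q-{\rm CZ}(x)+w(x)$. Applying $(-)^\bbS\otimes_\bbS{\rm MU}$, the complex part becomes canonically $\underline{\rm MU}$, and the real trivial part accounts for the suspension $\Sigma^{\deg(x)-\dim Q-w(x)}$. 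The term $\eval(u)(0)^*TQ$ is absorbed by the orientation/parallel-transport identification used in Lemma~\ref{lem:symmetrybreaking}.

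Finally, all choices (connecting operators, homotopies of boundary conditions) lie in contractible spaces. The identifications are therefore canonical pointwise and extend over families in $\scrU_\ell(x)$, giving an isomorphism of MU-local systems.

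The main obstacle is the third step: keeping the grading bookkeeping honest in the non-orientable case. Specifically, we must show that the real rank shift is exactly $\deg(x)-\dim Q-w(x)$ and that the residual real pieces (the fiber $TQ$ at the basepoint, and the non-orientability twist of $\eval(u)^*TQ$) become canonically trivial after passing to MU, rather than only up to sign. My first attempt would be to reduce to the determinant-line computation in the proof of Lemma~\ref{lem:localsystemcomputation}, via \cite[Lemma 4.3.1]{Abo14}, and then upgrade from determinant lines to MU-trivializations using the complex-linear deformation.
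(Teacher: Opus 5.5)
Your strategy is the paper's. You glue $D^\calV_{\eval_r(u)}$ to $D^{\scrU_\ell}_u$ along $\{0\}\times S^1$, obtaining a cylinder operator $D\equiv D^\calV_{\eval_r(u)}\#D^{\scrU_\ell}_u$ from the standard end to $x$, and then apply the conjugate-pairing trick of \eqref{eqn:analogouscomplex}. However, Step 2 is wrong, and because of it the bookkeeping you flag as the main obstacle cannot close.

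Gluing two half-cylinders along a common totally real boundary circle is additive on indices, with no correction term. In the index formula $n\chi(S)+\mu(E,F)$ for a surface $S$, the gluing circle contributes $\chi=0$, and the two (relative) Maslov contributions add up to twice the first Chern number of the glued bundle. As a sanity check, two disks with boundary condition $\bbR^n$ each have index $n$, and they glue to a sphere of index $2n$. So $\ind D\cong\ind D^\calV_{\eval_r(u)}+T_u\scrU_\ell(x)$, and your sequence with the extra summand $\eval(u)(0)^*TQ$ is off by $\dim Q$ in rank. Such a summand could not be ``absorbed'' by a parallel-transport identification anyway: after $\otimes_\bbS{\rm MU}$ it contributes a shift by $\dim Q$ (up to a sign choice), which contradicts the degree in the statement.

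The line $s\mapsto(s,1)$ enters only in \emph{defining} $D^\calV_{\eval_r(u)}$. There it fixes the homotopy of totally real boundary conditions when $D^\calV_a$ is transported along the gradient path; it never produces a fiber correction. Nor does $w(x)$ enter through a basepoint: $\deg(x)-\dim Q-w(x)=-{\rm CZ}(x)$ is simply the rank of $\ind\overline{D}$. A smaller point on Step 1: $\ind D^\calV_\alpha$ and $\ind D^\calV_a$ differ by the complex index of the connecting operator. They therefore agree canonically only after $\otimes_\bbS{\rm MU}$, which is all that is needed.

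With Step 2 corrected, the argument is three lines and needs no determinant-line input from \cite[Lemma 4.3.1]{Abo14}. That input would not suffice in any case, since determinant lines only see orientation ($H\bbZ$-level) data and cannot produce an MU-trivialization. The operator $D$ is only a real Cauchy--Riemann operator. But $D\oplus\overline{D}$ is complex, and $\ind\overline{D}\cong\bbR^{\deg(x)-\dim Q-w(x)}$ canonically, exactly as in \eqref{eqn:analogouscomplex}. Hence $\ind D+\bbR^{\deg(x)-\dim Q-w(x)}$ is a complex virtual space, and applying $(-)^\bbS\otimes_\bbS{\rm MU}$ together with additivity gives the lemma.

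Note the direction here: the real trivial piece is \emph{added} to $\ind D$ to make it complex. If, as you wrote, $\ind D$ were a complex space plus $\bbR^{\deg(x)-\dim Q-w(x)}$, you would end with $\Sigma^{2(\deg(x)-\dim Q-w(x))}{\rm MU}$ rather than ${\rm MU}$. Finally, the non-orientability of $\eval(u)^*TQ$ needs no separate treatment, since no boundary condition remains after gluing along it.
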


\begin{proof}
We will construct the following canonical pointwise homotopy equivalence:
    \begin{equation}
    \Big(\big(T_u\scrU_\ell(x)\big)^\bbS\otimes_\bbS{\rm MU}\Big)\otimes_{\rm MU}\eval_r^*\Sigma^{\deg(x)-\dim Q-w(x)}\calV_r\vert_u\simeq{\rm MU}.
    \end{equation}
Consider the glued together operator $D^\calV_{\eval_r(u)}\#D^{\scrU_\ell}_u$. Analogously to the discussion surrounding \eqref{eqn:analogouscomplex}, we have a canonical identification 
    \begin{equation}
    \ind\Big(D^\calV_{\eval_r(u)}\#D^{\scrU_\ell}_u\oplus\overline{D^\calV_{\eval_r(u)}\#D^{\scrU_\ell}_u}\Big)\cong\ind(D^\calV_{\eval_r(u)}\#D^{\scrU_\ell}_u)+\bbR^{\deg(x)-\dim Q-w(x)},
    \end{equation}
where the left hand side is a complex virtual vector space. Now, the proposition follows after taking the associated MU-module of the previous equation.
\end{proof}

\begin{cor}\label{cor:extension}
There exists a virtual bundle $I(x,a)\to\scrU(x,a)$, which is trivialized as an ${\rm MU}$-local system and satisfies the natural associativity conditions, such that we have the following canonical isomorphism of virtual bundles:
    \begin{equation}
    T\scrU_\ell(x,a)+\underline{\bbR}^{I(a)}+\underline{(\Sigma^{-\dim Q}\calV_r\otimes\epsilon)}\vert_a\cong I(x,a)+\underline{\bbR}^{-\deg(x)}.
    \end{equation}
Moreover, we have that this twisted spherical complex orientation extends over the Gromov-compactification and, when restricted to a codimension 1 boundary stratum, agrees with the already constructed twisted spherical complex orientation on that codimension 1 boundary stratum.
\end{cor}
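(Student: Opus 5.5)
The plan is to combine the preceding lemma, which gives a pointwise MU-trivialization of $T\scrU_\ell(x)$ twisted by $\eval_r^*\Sigma^{\deg(x)-\dim Q-w(x)}\calV_r$, with the twisted Morse framing of the stable manifold $W^s(a;f^r)$.

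\textbf{Interior isomorphism.} Since $\scrU_{\ell,r}(x,a)=\eval_r^{-1}\big(W^s(a;f^r)\big)$ is a transverse preimage, there is a short exact sequence
\begin{equation}
0\to T\scrU_{\ell,r}(x,a)\to T\scrU_\ell(x)\to\eval_r^*\nu W^s(a;f^r)\to0,
\end{equation}
and the standard Morse framing identifies $\nu W^s(a;f^r)$ with $\underline{\bbR}^{I(a)}$. Over $W^s(a;f^r)$ the local system $\calV_r$ is, by construction, literally the constant $\frakV_a$. The transition maps along $\widetilde\Gamma_{\alpha a}$ are canonical, and each operator $D^\calV_\Gamma$ is complex linear.

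Substituting into the lemma, I will define $I(x,a)$ as the virtual bundle
\begin{equation}
\ind\Big(D^\calV_{\eval_r(u)}\#D^{\scrU_\ell}_u\oplus\overline{D^\calV_{\eval_r(u)}\#D^{\scrU_\ell}_u}\Big)
\end{equation}
minus the normal directions, with its complex (hence MU-) trivialization. The grading shift $\deg(x)-w(x)$ splits as $\underline{\bbR}^{-\deg(x)}$ on one side, while $\underline{\Sigma^{-w(x)}}$ is absorbed by $\epsilon$. This uses that $\eval(u)^*TQ$ is orientable exactly when $x^*TQ$ is, because $u$ is a homotopy from $\eval(u)$ to $x$. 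This yields the displayed isomorphism on the interior.

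\textbf{Compatibility with gluing.} This is the main step, which I expect to be the obstacle. I will use the extension of index bundles to the Gromov compactification given by the analogue of Proposition~\ref{prop:indexfloer}, via \cite[Section 7]{Lar21}. There are two kinds of codimension 1 strata.

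For a Floer breaking $\bbF^\ell(x,x')\times\bbU_{\ell,r}(x',a)$, linear gluing gives
\begin{equation}
\ind(D^\calV\#D^{\scrU_\ell})\cong\ind(D^\calV\#D^{\scrU_\ell}_{x'})\oplus\ind D\overline{\partial}^{x'x},
\end{equation}
and the same holds for the conjugate operators. This matches the MU-orientation \eqref{eqn:canonicalMUFloer} on $\bbF^{{\rm MU},\ell}$, since both are defined by the same doubling $D\oplus\overline{D}$. The compatibility of the $\underline{\bbR}$ translation and collar directions follows from the commuting diagram in Proposition~\ref{prop:indexfloer}.

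For a Morse breaking $\bbU_{\ell,r}(x,a')\times\bbM^r(a',a)$, the key point is that the identification $\frakV_{a'}\simeq\frakV_a$ along the broken gradient line is exactly the canonical equivalence used to build $\calV_r$ on 1-simplices. It is obtained by concatenating $D^\calV_\Gamma$ and applying the argument of Lemma~\ref{lem:symmetrybreaking}, where parallel transport along the fixed line $s\mapsto(s,1)$ is used. I must check that this agrees with the twisted spherically complex-oriented structure on $\bbM^{{\rm scx},r,\Sigma^{-\dim Q}\calV_r\otimes\epsilon}$, namely $I(a',a)$ and its MU-trivialization. This holds because both come from the same contractible space of choices of $D^\calV_\Gamma$ together with the canonical null-homotopy of ${\rm BU}\to{\rm BGL}_1({\rm MU})$.

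\textbf{Higher strata and families.} The associativity conditions at higher codimension strata follow because every identification above is canonical, meaning it is chosen from contractible spaces. The choices can therefore be made inductively over the corner strata, so that they are coherent over $\bbU_{\ell,r}(x,a)$. Near the boundary, the trivializations are extended across collar neighborhoods using the gluing isomorphisms.
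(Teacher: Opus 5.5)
Your proposal follows essentially the same route as the paper. You split the transverse-preimage sequence to get $T\scrU_\ell(x,a)+\underline{\bbR}^{I(a)}\cong T\scrU_\ell(x)$ and take $I(x,a)$ from the doubled glued operator $D^{\calV}_{\eval_r(u)}\#D^{\scrU_\ell}_u\oplus\overline{D^{\calV}_{\eval_r(u)}\#D^{\scrU_\ell}_u}$ of the preceding lemma. You then check Floer-breaking strata by gluing doubled operators, and Morse-breaking strata by matching against the twisted Morse structure; the paper phrases that last check as cancellation of the $TW^u(a';f^r)$ and $(\Sigma^{-\dim Q}\calV_r\otimes\epsilon)\vert_{a'}$ terms.

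There is one small bookkeeping slip. $I(x,a)$ should be the full doubled index bundle, not that bundle minus the normal directions, because the $\underline{\bbR}^{I(a)}$ already sits alongside $T\scrU_\ell(x,a)$ on the left-hand side.
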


\begin{proof}
By splitting the short exact sequence 
    \begin{equation}
    0\to T\scrU_\ell(x,a)\to T\scrU_\ell(x)\to TW^u(a;f^r)\to0, 
    \end{equation}
we obtain an identification 
    \begin{equation}
    T\scrU_\ell(x,a)+\underline{\bbR}^{I(a)}\cong T\scrU_\ell(x).
    \end{equation}
Now, the previous lemma provides the existence of $I(x,a)\to\scrU_\ell(x,a)$, satisfying the conditions in the statement of this corollary, such that 
    \begin{equation}
    T\scrU_\ell(x)\vert_{\scrU_\ell(x,a)}+\underline{(\Sigma^{\deg(x)-\dim Q}\calV_r\otimes\epsilon)}\vert_a\cong I(x,a);
    \end{equation}
hence,
    \begin{align}
    T\scrU_\ell(x,a)+\underline{\bbR}^{I(a)}+\underline{(\Sigma^{\deg(x)-\dim Q}\calV_r\otimes\epsilon)}\vert_a\cong I(x,a)&\implies \\
    T\scrU_{\ell,r}(x,a)+\underline{\bbR}^{I(a)}+\underline{(\Sigma^{-\dim Q}\calV_r\otimes\epsilon)}\vert_a\cong I(x,a)+\underline{\bbR}^{-\deg(x)}&.
    \end{align}

The claims about this twisted spherical complex orientation (1) extending over the Gromov-compactification and (2) having the correct restriction on a codimension 1 boundary stratum both follow in a straightforward manner since our various twisted spherical complex orientations are constructed via gluing operators. Since both previous and future arguments use similar ideas, we will explain the proof here and skip the details in other places for brevity.

First, our twisted spherical complex orientation extends over the Gromov-compactification by the analogue of Proposition \ref{prop:indexfloer}. 

Second, we consider a codimension 1 boundary stratum of the form
    \begin{equation}
    \bbF^\ell(x,x')\times\bbU_{\ell,r}(x',a)\hookrightarrow\bbU_{\ell,r}(x,a).
    \end{equation}
Let 
    \begin{equation}
    (u_1,u_2)\in\operatorname{int}\big(\bbF^\ell(x,x')\times\bbU_{\ell,r}(x',a)\big).
    \end{equation}
The prescribed twisted spherical complex orientation at $(u_1,u_2)$ is obtained by considering the two operators    
    \begin{align}
    D(\overline{\partial}_{H^\ell,J^\ell})_{u_1}\oplus \overline{D(\overline{\partial}_{H^\ell,J^\ell})_{u_1}}, \\
    D^\calV_a\#D^{\scrU_\ell}_{u_2}\oplus\overline{D^\calV_a\#D^{\scrU_\ell}_{u_2}}
    \end{align}
and summing their index bundles with $TW^u(a;f^r)\cong\underline{\bbR}^{I(a)}$. The claim follows by observing that the glued together operator
    \begin{equation}
    \Big(D^\calV_a\#D^{\scrU_\ell}_{u_2}\oplus\overline{D^\calV_a\#D^{\scrU_\ell}_{u_2}}\Big)\#\Big(D(\overline{\partial}_{H^\ell,J^\ell})_{u_1}\oplus \overline{D(\overline{\partial}_{H^\ell,J^\ell})_{u_2}}\Big)
    \end{equation}
can be canonically deformed to the operator 
    \begin{equation}
    D^\calV_a\# D^{\scrU_\ell}_u\oplus\overline{D^\calV_a\# D^{\scrU_\ell}_u}, 
    \end{equation}
where $u\equiv u_1\#u_2$ is the gluing of $u_1$ and $u_2$.

Finally, we consider a codimension 1 boundary stratum of the form
    \begin{equation}
    \bbU_{\ell,r}(x,a')\times\bbM^r(a',a)\hookrightarrow\bbU_{\ell,r}(x,a).
    \end{equation}
Let 
    \begin{equation}
    (u,\gamma)\in\operatorname{int}\big(\bbU_{\ell,r}(x,a')\times\bbM^r(a',a)\big).
    \end{equation}
The prescribed twisted spherical complex orientation at $(u,\gamma)$ is obtained by: considering the two operators 
    \begin{align}
    D^\calV_{a'}\# D^{\scrU_\ell}_u\oplus\overline{D^\calV_{a'}\# D^{\scrU_\ell}_u}, \\
    D(\Xi_r)_\gamma;
    \end{align}
summing the index bundle of the first operator with $TW^u(a';f^r)\cong\underline{\bbR}^{I(a')}$; summing the index bundle of the second operator with 
    \begin{equation}
    TW^u(a;f^r)+\underline{(\Sigma^{-\dim Q}\calV_r\otimes\epsilon)}\vert_a-TW^u(a';f^r)-\underline{(\Sigma^{-\dim Q}\calV_r\otimes\epsilon)}\vert_{a'};
    \end{equation}
and taking one final sum. The claim follows by observing the cancellations of the $TW^u(a';f^r)$ and $\underline{(\Sigma^{-\dim Q}\calV_r\otimes\epsilon)}\vert_{a'}$ terms in the final sum.
\end{proof}

In particular, we have constructed a spherically complex-oriented flow bimodule 
    \begin{equation}
    \bbU^{\rm scx}_{\ell,r}:\bbF^{{\rm scx},\ell}\to\bbM^{{\rm scx},r,\Sigma^{-\dim Q}\calV\otimes\epsilon}.
    \end{equation}
It remains to show compatibility with the continuation resp. inclusion spherically complex-oriented flow bimodules.

\subsubsection{Directed systems: complex-oriented}
Again, we first consider the inclusions. 

\begin{prop}
The spherically complex-oriented flow bimodules $\bbI^{\rm scx}_{r,r+1}\circ\bbU^{\rm scx}_{\ell,r}$ and $\bbU^{\rm scx}_{\ell,r+1}$ are homotopic as spherically complex-oriented flow bimodules.
\end{prop}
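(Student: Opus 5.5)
The plan is to upgrade the unstructured flow 2-simplex $\bbU^\bbI:\bbI_{r,r+1}\circ\bbU_{\ell,r}\Rightarrow\bbU_{\ell,r+1}$ of Proposition \ref{prop:unstructuredinclusion} to a spherically complex-oriented flow 2-simplex. The moduli spaces $\bbU^\bbI(x,a)$ are unchanged. What must be added is a twisted spherical complex orientation on each of them, of the form
\begin{equation}
T\bbU^\bbI(x,a)+\underline{\bbR}^{I(a)}+\underline{(\Sigma^{-\dim Q}\calV_{r+1}\otimes\epsilon)}\vert_a\cong I^\bbI(x,a)+\underline{\bbR}+\underline{\bbR}^{-\deg(x)}.
\end{equation}
Here $I^\bbI(x,a)$ is a virtual bundle trivialized as an MU-local system, and the extra $\underline{\bbR}$ accounts for the collar parameter $s$ of the 2-simplex, i.e.\ the $\abs{\{j+1,\ldots,j'\}}$ term. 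This orientation should restrict correctly to every codimension 1 boundary stratum.

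First, I would build the orientation on the interior $\scrU^\bbI(x,a)=\eval_{\bbR,r}^{-1}(W^s(a;f^{r+1}))$. Splitting the transversality sequence $0\to T\scrU^\bbI(x,a)\to\underline{\bbR}\oplus T\scrU_\ell(x)\to TW^u(a;f^{r+1})\to0$ reduces this to orienting $T\scrU_\ell(x)$, twisted by the pullback of $\calV_{r+1}$ along $\eval_{\bbR,r}$. Two facts then combine:
\begin{enumerate}
\item the lemma preceding Corollary \ref{cor:extension} gives $T\scrU_\ell(x)$ twisted by $\eval_r^*\Sigma^{\deg(x)-\dim Q-w(x)}\calV_r$ as trivial;
\item the compatibility proposition $\calV_r\simeq i_{r,r+1}^*\calV_{r+1}$, together with the $\infty$-functoriality of $\calV_{r+1}$ along the path $s\mapsto i_{r,r+1}\phi^r_s\eval_r(u)$, identifies $\eval_r^*\calV_r$ with $\eval_{\bbR,r}^*\calV_{r+1}$.
\end{enumerate}
Concretely, for $(s,u)$ one glues $D^\calV_b$ (where $b$ is the critical point of $f^{r+1}$ whose stable manifold contains $\eval_{\bbR,r}(s,u)$) with the complex-linear operator $D^\calV_\Gamma$ along the concatenated path $\Gamma$, and then with $D^{\scrU_\ell}_u$. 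The doubling trick of \eqref{eqn:analogouscomplex} produces the complex virtual bundle. The shift by $\epsilon$ is locally constant, since $w$ is constant along paths in $\calL Q$, so it passes through unchanged.

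Next comes checking the boundary strata. The Floer breaking faces $\bbF^\ell(x,x')\times\bbU^\bbI(x',a)$ and the Morse breaking faces $\bbU^\bbI(x,a')\times\bbM^{r+1}(a',a)$ are handled exactly as in the proof of Corollary \ref{cor:extension}: glued operators deform canonically to the operator on the glued solution, and the $TW^u(a')$ and $\calV_{r+1}\vert_{a'}$ terms cancel. At the face $s=0$, the path $\Gamma$ degenerates to the one used in defining $\bbU^{\rm scx}_{\ell,r+1}$, because $\phi^r_0=\mathrm{id}$, so the orientation agrees by construction.

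The main obstacle is the face at $s=+\infty$, namely $\coprod_{a'}\bbU_{\ell,r}(x,a')\times\bbI_{r,r+1}(a',a)/\sim$. There one must match the composite orientation: the orientation of $\bbU^{\rm scx}_{\ell,r}$ twisted by $\calV_r\vert_{a'}$, combined with the $\calV$-twisted framing of $\bbI^{\rm scx}_{r,r+1}$ transporting $\calV_r\vert_{a'}$ to $\calV_{r+1}\vert_a$. For this I would use that, as $s\to\infty$, the path $\phi^r_s\eval_r(u)$ converges to a broken path through $a'$. The canonical transport isomorphisms of the Lemma \ref{lem:symmetrybreaking} type then compose, after a canonical deformation of glued Cauchy–Riemann operators, to the one defining the twist on $\bbI_{r,r+1}$, namely the equivalence $\calV_r\vert_{a'}\simeq\calV_{r+1}\vert_a$ from the compatibility proposition. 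Because every identification is canonical and lives in a contractible space of operator choices, this extends over the corner strata and respects the equivalence relation $\sim$, yielding the homotopy.
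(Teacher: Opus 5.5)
Your proposal is correct and follows the paper's proof. The paper likewise upgrades the unstructured 2-simplex $\bbU^\bbI$ by doubling the glued operator $D^\calV_{\eval_{\bbR,r}(u)}\#D^{\scrU_\ell}_u$, then uses $T\scrU^\bbI(x)\cong\underline{\bbR}+T\scrU_\ell(x)$ and the splitting against $TW^u(a;f^{r+1})$ to reach exactly your twisted orientation with the extra $\underline{\bbR}$; your explicit treatment of the $s=0$ and $s=+\infty$ faces fills in what the paper dismisses as analogous to Corollary \ref{cor:extension}.
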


\begin{proof}
The key point is to provide spherical complex orientations on the moduli spaces $\bbU^\bbI(x,a)$ appearing in the proof of Proposition \ref{prop:unstructuredinclusion} which, when restricted to codimension 1 boundary strata, agree with the already constructed spherical complex orientations.

By considering $u\in\scrU^\bbI(x)$ together with the operator 
    \begin{equation}
    D^\calV_{\eval_{\bbR,r}(u)}\#D^{\scrU_\ell}_u\oplus\overline{D^\calV_{\eval_{\bbR,r}}(u)\#D^{\scrU_\ell}_u},
    \end{equation}
and using the fact that 
    \begin{equation}
    T\scrU^\bbI(x)\cong\underline{\bbR}+T\scrU_\ell(x),
    \end{equation}
we obtain the following canonical isomorphism of MU-local systems:
    \begin{equation}\label{eqn:aux5}
    \Big(\big(T\scrU^\bbI(x)-\underline{\bbR}\big)^\bbS\otimes_\bbS{\rm MU}\Big)\otimes_{\rm MU}\eval_{\bbR,r}^*\Sigma^{\deg(x)-\dim Q-w(x)}\calV_{r+1}\cong\underline{{\rm MU}}.
    \end{equation}

By splitting the short exact sequence 
    \begin{equation}
    0\to T\scrU^\bbI(x,a)\to T\scrU^\bbI(x)\to TW^u(a;f^{r+1})\to0, 
    \end{equation}
we obtain an identification 
    \begin{equation}
    T\scrU^\bbI(x,a)+\underline{\bbR}^{I(a)}\cong T\scrU^\bbI(x).
    \end{equation}
Now, \eqref{eqn:aux5} provides the existence of $I(x,a)\to\scrU^\bbI(x,a)$, satisfying the appropriate MU-triviality and associativity conditions, such that 
    \begin{equation}
    T\scrU^\bbI(x)\vert_{\scrU^\bbI(x,a)}-\underline{\bbR}+\underline{(\Sigma^{\deg(x)-\dim Q}\calV_{r+1}\otimes\epsilon)}\vert_a\cong I(x,a);
    \end{equation}
hence,
    \begin{equation}
    T\scrU^\bbI(x,a)+\underline{\bbR}^{I(a)}+\underline{(\Sigma^{-\dim Q}\calV_{r+1}\otimes\epsilon)}\vert_a\cong I(x,a)+\underline{\bbR}+\underline{\bbR}^{-\deg(x)}.
    \end{equation}

The claims about this spherical complex orientation (1) extending over the Gromov-compactification and (2) having the correct restriction on a codimension 1 boundary stratum both follow analogously to the proof of Corollary \ref{cor:extension}; the proposition follows.
\end{proof}

We continue with the continuations.

\begin{prop}
The spherically complex-oriented flow bimodules $\bbU^{\rm scx}_{\ell+1,r}\circ\frakc^{\rm scx}_{\ell,\ell+1}$ and $\bbU^{\rm scx}_{\ell,r}$ are homotopic as spherically complex-oriented flow bimodules.
\end{prop}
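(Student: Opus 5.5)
The plan mirrors the proof of the preceding proposition on inclusions. The key point is to upgrade the unstructured flow 2-simplex
\begin{equation}
\bbU^\frakc:\bbU_{\ell+1,r}\circ\frakc_{\ell,\ell+1}\Rightarrow\bbU_{\ell,r}
\end{equation}
from Proposition \ref{prop:unstructuredcontinuation} to a spherically complex-oriented flow 2-simplex. Concretely, we must equip each $\bbU^\frakc(x,a)$ with a twisted spherical complex orientation. On the codimension 1 boundary strata this orientation should restrict to the ones already constructed:
\begin{itemize}
\item on $\bbU_{\ell,r}(x,a)$ at $\mu=-\infty$, the orientation from Corollary \ref{cor:extension};
\item on $\frakc_{\ell,\ell+1}(x,x')\times\bbU_{\ell+1,r}(x',a)$ at $\mu=+\infty$, the product of \eqref{eqn:canonicalMUFloercontiuation} and Corollary \ref{cor:extension};
\item on the Floer and Morse breaking strata, the products of the orientations already in place there.
\end{itemize}

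\textbf{Step 1: a pointwise trivialization on $\scrU^\frakc(x)$.} For $(\mu,u)\in\scrU^\frakc(x)$, let $D^{\scrU^\frakc}_{(\mu,u)}$ be the linearized operator at fixed $\mu$. The tangent space splits canonically as
\begin{equation}
T\scrU^\frakc(x)\cong\underline{\bbR}+\ind D^{\scrU^\frakc},
\end{equation}
where $\underline{\bbR}$ is the $\mu$-direction. We then form the doubled glued operator
\begin{equation}
D^\calV_{\eval_r(\mu,u)}\#D^{\scrU^\frakc}_{(\mu,u)}\oplus\overline{D^\calV_{\eval_r(\mu,u)}\#D^{\scrU^\frakc}_{(\mu,u)}}.
\end{equation}
Its index is a complex virtual vector space, and, as in the discussion around \eqref{eqn:analogouscomplex}, it is canonically identified with $\ind(D^\calV\#D^{\scrU^\frakc})+\bbR^{\deg(x)-\dim Q-w(x)}$. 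Exactly as in the lemma preceding Corollary \ref{cor:extension}, this gives a canonical isomorphism of MU-local systems
\begin{equation}
\Big(\big(T\scrU^\frakc(x)-\underline{\bbR}\big)^\bbS\otimes_\bbS{\rm MU}\Big)\otimes_{\rm MU}\eval_r^*\Sigma^{\deg(x)-\dim Q-w(x)}\calV_r\cong\underline{\rm MU}.
\end{equation}
The asymptotic orbit is $x\in\chi(H^\ell)$ for all $\mu$, and the boundary condition is $Q$ for all $\mu$, so the grading data $\deg(x)$ and $w(x)$ are constant along the family. No new twisting appears.

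\textbf{Step 2: restriction to $\scrU^\frakc(x,a)$.} We split
\begin{equation}
0\to T\scrU^\frakc(x,a)\to T\scrU^\frakc(x)\to TW^u(a;f^r)\to0.
\end{equation}
Combining this with Step 1 gives a virtual bundle $I(x,a)$, trivialized as an MU-local system, and an isomorphism
\begin{equation}
T\scrU^\frakc(x,a)+\underline{\bbR}^{I(a)}+\underline{(\Sigma^{-\dim Q}\calV_r\otimes\epsilon)}\vert_a\cong I(x,a)+\underline{\bbR}+\underline{\bbR}^{-\deg(x)},
\end{equation}
which is the form required of a 2-simplex.

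\textbf{Step 3 (main obstacle): compatibility at $\mu=+\infty$.} Extension over the Gromov compactification follows from the analogue of Proposition \ref{prop:indexfloer}. The Morse and Floer breaking strata are handled verbatim as in Corollary \ref{cor:extension}. The $\mu=+\infty$ face is the delicate one. There we must show that the glued operator
\begin{equation}
\Big(D^\calV_a\#D^{\scrU_{\ell+1}}_{u_2}\oplus\overline{\,\cdot\,}\Big)\#\Big(D_{u_1}\oplus\overline{D_{u_1}}\Big),
\end{equation}
with $u_1\in\frakc_{\ell,\ell+1}(x,x')$ and $u_2\in\bbU_{\ell+1,r}(x',a)$, deforms canonically to the doubled operator of Step 1 at $\mu\gg0$. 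We must also check that the resulting identification of indices matches the product of the two existing orientations, including the shift $\deg(x)-\deg(x')$ and the relation between $w(x)$ and $w(x')$.

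I would argue this as follows. The conjugate operators are glued along the same asymptotics. The complex-linear part of the deformation is canonically contractible because ${\rm BU}\to{\rm BGL}_1({\rm MU})$ is canonically null. Finally, the identifications $\ind\overline{D}\cong\bbR^{\deg(\cdot)-\deg(\cdot)}$ are additive under gluing. Granting this, the $\mu=-\infty$ face matches Corollary \ref{cor:extension} tautologically, and the proposition follows.
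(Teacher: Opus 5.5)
Your proposal is correct and follows the paper's proof essentially verbatim. The paper likewise obtains the MU-trivialization on $\scrU^\frakc(x)$ from the doubled glued operator together with $T\scrU^\frakc(x)\cong T\scrU_\ell(x)+\underline{\bbR}$, splits off $TW^u(a;f^r)$ to get the same twisted spherical complex orientation on $\scrU^\frakc(x,a)$, and defers extension and boundary compatibility to the argument of Corollary \ref{cor:extension}; your extra discussion of the $\mu=+\infty$ face, which you correctly read as $\frakc_{\ell,\ell+1}(x,x')\times\bbU_{\ell+1,r}(x',a)$, just spells out what the paper leaves to that analogy.
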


\begin{proof}
Again, the key point is to provide spherical complex orientations on the moduli spaces $\bbU^\frakc(x,a)$ appearing in the proof of Proposition \ref{prop:unstructuredcontinuation} which, when restricted to codimension 1 boundary strata, agree with the already constructed spherical complex orientations.

By considering $u\in\scrU^\frakc(x)$ together with the operator 
    \begin{equation}
    D^\calV_{\eval_r(u)}\#D^{\scrU_\ell}_u\oplus\overline{D^\calV_{\eval_r(u)}\#D^{\scrU_\ell}_u},
    \end{equation}
and using the fact that 
    \begin{equation}
    T\scrU^\frakc(x)\cong T\scrU_\ell(x)+\underline{\bbR},
    \end{equation}
we obtain the following canonical isomorphism of MU-local systems:
    \begin{equation}\label{eqn:aux2}
    \Big(\big(T\scrU^\frakc(x)-\underline{\bbR}\big)^\bbS\otimes_\bbS{\rm MU}\Big)\otimes_{\rm MU}\eval_r^*\Sigma^{\deg(x)-\dim Q-w(x)}\calV_r\cong\underline{{\rm MU}}.
    \end{equation}

By splitting the short exact sequence 
    \begin{equation}
    0\to T\scrU^\frakc(x,a)\to T\scrU^\frakc(x)\to TW^u(a;f^r)\to0, 
    \end{equation}
we obtain an identification 
    \begin{equation}
    T\scrU^\frakc(x,a)+\underline{\bbR}^{I(a)}\cong T\scrU^\frakc(x).
    \end{equation}
Now, \eqref{eqn:aux2} provides the existence of $I(x,a)\to\scrU^\frakc(x,a)$, satisfying the appropriate MU-triviality and associativity conditions, such that 
    \begin{equation}
    T\scrU^\frakc(x)\vert_{\scrU^\frakc(x)}-\underline{\bbR}+\underline{(\Sigma^{\deg(x)-\dim Q}\calV_r\otimes\epsilon)}\vert_a\cong I(x,a);
    \end{equation}
hence,
    \begin{equation}
    T\scrU^\frakc(x,a)+\underline{\bbR}^{I(a)}+\underline{(\Sigma^{-\dim Q}\calV_r\otimes\epsilon)}\vert_a\cong I(x,a)+\underline{\bbR}+\underline{\bbR}^{-\deg(x)}.
    \end{equation}

The claims about this spherical complex orientation (1) extending over the Gromov-compactification and (2) having the correct restriction on a codimension 1 boundary stratum both follow analogously to the proof of Corollary \ref{cor:extension}; the proposition follows.
\end{proof}

Appropriately modifying the proof of Lemma \ref{lem:localsystemcomputation} yields the following result.

\begin{lem}
We have that 
    \begin{equation}
    H_*\big((\calL^rQ)^{\Sigma^{-\dim Q}\calV\otimes\epsilon};\bbZ\big)\cong H_*(\calL^rQ;\eta).
    \end{equation}
\end{lem}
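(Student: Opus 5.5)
The plan is to compute the homology of the Thom spectrum through the Morse model. By the lemma computing $\frakM^{{\rm scx},r,\Phi}$, and since $-\otimes_{\rm MU}\frakR^{\rm scx}$ admits a retraction (Lemma \ref{lem:conservative}), it suffices to compute $H_*$ of the complex-oriented Morse model with $\Phi=\Sigma^{-\dim Q}\calV\otimes\epsilon$. Applying $-\otimes_{\rm MU}H\bbZ$ via the complex orientation ${\rm MU}\to H\bbZ$ turns the Morse flow category into a Morse chain complex. Its generator at $a\in\crit(f^r)$ is the rank-one free abelian group
\[
H_*\big(\Sigma^{I(a)}\Phi\vert_a\otimes_{\rm MU}H\bbZ\big)\cong \det\nolimits_\bbR(\ind D^\calV_a)\big[I(a)-\dim Q+w(a)\big],
\]
where the $\epsilon$-factor contributes the shift $w(a)$. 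A rigid Morse trajectory $\gamma$ from $a$ to $b$ then acts in two ways. It gives the standard Morse sign coming from \eqref{eqn:morseframing}. It also gives an isomorphism $\det_\bbR D^\calV_a\cong\det_\bbR D^\calV_b$, induced by the canonical identification $\ind(D^\calV_\Gamma\#D^\calV_a)\cong\ind D^\calV_b$ used in building $\calV_r$ along the path ${\rm geo}\circ\gamma$. The only input here is that ${\rm MU}\to H\bbZ$ sends the complex-linear trivialization of $(\ind D^\calV_\Gamma)^\bbS\otimes{\rm MU}$ to the complex orientation of $\det_\bbR D^\calV_\Gamma$. Consequently the resulting complex is the Morse complex of $\calL^rQ$ with coefficients in the rank-one local system $\alpha\mapsto\det_\bbR D^\calV_{\alpha}[w-\dim Q]$, whose monodromy is given by parallel transport of determinant lines.

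The key step is to identify this local system with $\eta$, mirroring Lemma \ref{lem:localsystemcomputation}. For a loop $\Gamma={\rm geo}(a)$, I glue $D^\calV_a$ (totally real condition $\Gamma^*TQ$ on the boundary, standard at $-\infty$) to a complex-linear cap on $\bbC$ with a positive end. The result is a Cauchy–Riemann operator on the disk with totally real boundary condition $\Gamma^*TQ$. Since the cap is complex-linear, its determinant line is canonically oriented, so
\[
\det\nolimits_\bbR D^\calV_a\cong\det\nolimits_\bbR(\text{disk operator with boundary }\Gamma^*TQ).
\]
I then apply \cite[Lemma 4.3.1]{Abo14}, which identifies the latter with $\sigma^{TQ}\vert_\Gamma\otimes o^{-1}_{TQ}\vert_{\Gamma(0)}\otimes(o_{TQ}\vert_{\Gamma(0)}[\dim Q])^{-w(\Gamma)}$ up to a shift. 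After inserting the $[w-\dim Q]$ shift, this yields $\eta_\Gamma$ in the right degree, which is supported in degree $-\dim Q$ as in \eqref{eqn:localsystem}.

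I expect the main obstacle to be naturality: showing that the transport isomorphisms along 1-simplices agree with the monodromy of $\eta$. These transports are defined by the symmetry-breaking identification of Lemma \ref{lem:symmetrybreaking}, which uses parallel transport along $s\mapsto(s,1)$. I would compare the two by gluing caps at both ends of $D^\calV_\Gamma$ and invoking \cite[Lemma 4.3.1]{Abo14} in families over the path. I would also check that the basepoint-dependence of $o_{TQ}\vert_{\Gamma(0)}$ matches the transport of $T_{\Gamma(0)}Q$ along $s\mapsto(s,1)$, and that this holds sign-correctly on $\calL_1$, where the extra $[w]$ twist and $\epsilon$ interact. Given the identification, the Morse complex computes $H_*(\calL^rQ;\eta)$ by the standard Morse-theory-with-local-coefficients argument, as in \cite[Chapter 3]{Abo14}.
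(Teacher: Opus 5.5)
Your proposal is correct and follows the paper's argument. The paper proves this lemma by adapting the proof of Lemma \ref{lem:localsystemcomputation}: base-change to $H\bbZ$, take the generator lines to be the (shifted) $\det_\bbR D^\calV_a$, glue on a cap and apply \cite[Lemma 4.3.1]{Abo14} to recognize $\eta$, and use the same lemma to match the transport isomorphisms; your complex-linear, canonically oriented cap is exactly why no $\frako_x$-type factor appears here, unlike in the Floer case.
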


We have now all of the ingredients necessary to prove Theorem \ref{thm:mainMU}. 

\begin{proof}[Proof of Theorem \ref{thm:mainMU}]
We have constructed a map 
    \begin{equation}
    \bbU^{\rm scx}_{\ell,r}:\frakF^{{\rm scx},\ell}\to\frakM^{{\rm scx},r,\Sigma^{-\dim Q}\calV\otimes\epsilon}\simeq(\calL^rQ)^{\Sigma^{-\dim Q}\calV\otimes\epsilon}\otimes_{\rm MU}\frakR^{\rm scx}
    \end{equation}
which is compatible with the inclusions and continuations; hence, we induce a map 
    \begin{equation}\label{eqn:viterbo}
    \bbU^{\rm scx}:\frakF^{\rm MU}\otimes_{\rm MU}\frakR^{\rm scx}\to(\calL Q)^{\Sigma^{-\dim Q}\calV\otimes\epsilon}\otimes_{\rm MU}\frakR^{\rm scx}
    \end{equation}
which, on integral homology, is precisely the Viterbo isomorphism of \cite[Theorem 5.1.1]{Abo14}. Since every spectrum in sight is bounded below, a spectral Whitehead theorem for $H\bbZ$-module spectra shows the previous map is in fact a homotopy equivalence. 

Now, the results in \cite[Section 7]{PS25b} show there exists a complex-oriented flow category $\bbX^r$ satisfying 
    \begin{equation}
    \bbX^r\otimes_{\rm MU}\frakR^{\rm sfr}\cong\bbM^{{\rm scx},r,\Sigma^{-\dim Q}\calV\otimes\epsilon}\;\;{\rm and}\;\;\flow^{\rm cx}(\unit,\bbX^r)\simeq(\calL^rQ)^{\Sigma^{-\dim Q}\calV\otimes\epsilon}.
    \end{equation}
Essentially, we may upgrade the twisted spherical complex orientations on the Morse moduli spaces to actual stable complex structures, at the cost of altering the topology of the moduli spaces themselves. Moreover, the results in \emph{loc. cit.} show there exists a complex-oriented flow bimodule 
    \begin{equation}
    \bbU^{\rm MU}_{\ell,r}:\bbF^{\rm MU,\ell}\to\bbX^r
    \end{equation}
satisfying 
    \begin{equation}
    \bbU^{\rm MU}_{\ell,r}\otimes_{\rm MU}\frakR^{\rm scx}\cong\bbU^{\rm scx}_{\ell,r};
    \end{equation}
we may choose these to be compatible with inclusions and continuations. Therefore, our previous work, combined with Lemma \ref{lem:conservative}, implies we have a homotopy equivalence
    \begin{equation}
    \frakF^{\rm MU}\simeq(\calL Q)^{\Sigma^{-\dim Q}\calV\otimes\epsilon},
    \end{equation}
as desired.
\end{proof}

\subsection{With coefficients in framed bordism}
We end by constructing the framed lift of Subsection \ref{subsec:unstructured}.

\subsubsection{Viterbo moduli spaces: framed}
As before, we consider the moduli spaces given by the $\bbU_{\ell,r}(x,a)$'s. We will produce a twisted stable framing of $\bbU_{\ell,r}(x,a)$, as follows. An appropriate modification of the proof of Lemma \ref{lem:symmetrybreaking} yields the following. 

\begin{lem}\label{lem:symmetrybreaking2}
We have the following canonical isomorphism of virtual bundles: 
    \begin{equation}
    T\scrU_\ell(x)\cong\underline{\ind D}^\calA_x.
    \end{equation}
\end{lem}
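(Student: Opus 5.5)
The plan is to mimic the proof of Lemma \ref{lem:symmetrybreaking}, now with the Lagrangian boundary condition on $\{0\}\times S^1$ playing the role of the totally real boundary condition of the abstract cap. For $u\in\scrU_\ell(x)$, the linearized operator $D^{\scrU_\ell}_u$ is a Cauchy--Riemann operator on $\bbR_{\geq0}\times S^1$ acting on sections of $u^*T(T^*Q)$ with totally real boundary condition $u(0,\cdot)^*TQ$ along $\{0\}\times S^1$. Its asymptotic operator at $+\infty$ is $D(\overline{\partial}_{H^\ell,J^\ell})_x$. Surjectivity for generic data gives $T_u\scrU_\ell(x)=\ker D^{\scrU_\ell}_u\cong\ind D^{\scrU_\ell}_u$. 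It therefore suffices to produce a canonical isomorphism of virtual vector spaces
\begin{equation*}
\ind D^{\scrU_\ell}_u\cong\ind D^\calA_x
\end{equation*}
that varies continuously, indeed canonically, in $u$. Both operators are then of the same type: half-cylinders with the same positive asymptotics and totally real boundary conditions on the boundary circle.

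The first step is to homotope the bundle pair $\big(u^*T(T^*Q),u(0,\cdot)^*TQ\big)$ to $\big(\widetilde{x}^*T(T^*Q),x^*\Lambda_\std\big)$. Contract the half-cylinder along $s$, using the map $u$ itself (precomposed with $(s,t)\mapsto(s+\lambda,t)$, $\lambda\to\infty$). Along this deformation, the boundary condition becomes the totally real subbundle $F\subset\widetilde{x}^*T(T^*Q)$ obtained by transporting $TQ$ along $u$, exactly as in the proof of Lemma \ref{lem:symmetrybreaking}. The Lagrangian $u(0,\cdot)^*TQ=u(0,\cdot)^*\Lambda_\std|_Q$ is carried into a loop of totally real subspaces over $x$.

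The second step is to compare $F$ with $x^*\Lambda_\std$. These are two totally real subbundles of the same complex bundle over $S^1$, and they are homotopic through totally real subbundles because $\Lambda_\std$ is a global polarization. As in Lemma \ref{lem:symmetrybreaking}, a homotopy is pinned down once one fixes an isomorphism $T_{u(0,1)}Q\cong T_{\pi(x(1))}Q$. Take this to be parallel transport, using the Levi-Civita connection, along the projection to $Q$ of the path $s\mapsto u(s,1)$. This is the symmetry-breaking choice, and it is canonical once the line $s\mapsto(s,1)$ is fixed.

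The space of Cauchy--Riemann operators with fixed bundle pair and asymptotics is contractible. Deformation invariance of the index bundle then yields the isomorphism, and canonicity makes it glue into an isomorphism of virtual bundles over all of $\scrU_\ell(x)$. The main obstacle is to verify that the homotopy class of the identification of $F$ with $x^*\Lambda_\std$ is genuinely determined by that single parallel transport. This is needed in the non-orientable case, where loops in the totally real Grassmannian carry a $\pi_1$ ambiguity (Maslov and $w_1$ twisting). To settle it, I would check that the space of homotopies rel the basepoint identification over $t=1$ is contractible, as is implicitly used in Lemma \ref{lem:symmetrybreaking}.
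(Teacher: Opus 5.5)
You follow the route the paper indicates (it obtains this lemma by modifying the proof of Lemma~\ref{lem:symmetrybreaking}), and your Step~1 and the final contractibility/deformation-invariance step are fine. The gap is at the step you flag yourself, and the check you propose to close it is false. After trivializing $\widetilde{x}^*T(T^*Q)$ over the boundary circle, a homotopy from $F$ to $x^*\Lambda_\std$ through totally real subbundles is a path in $\calL\big(U(n)/O(n)\big)$, with $n=\dim Q$. Fix its restriction to $t=1$. The remaining choices are then, up to homotopy, paths between two points in the fiber of evaluation at $t=1$. That fiber is a based loop space $\Omega\big(U(n)/O(n)\big)$, so the space of choices is a copy of $\Omega^2\big(U(n)/O(n)\big)$. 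Its set of components is $\pi_2\big(U(n)/O(n)\big)\cong\pi_1O(n)$, which is stably $\bbZ/2$. Merely fixing a real isomorphism $T_{u(0,1)}Q\cong T_{\pi(x(1))}Q$ leaves even more freedom, since it fixes the Maslov part only mod $2$.

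The leftover $\bbZ/2$ is also visible to the index. Its generator is a loop of boundary conditions whose tautological real bundle over the torus has $w_2\neq0$, and transporting $\ind$ around it reverses orientation. This is the usual dependence of orientations of disks or half-cylinders with totally real boundary on a framing (spin structure) of the boundary loop. What actually determines the identification is the homotopy class of a bundle isomorphism $u(0,\cdot)^*TQ\cong x^*TQ$ over the \emph{whole} circle, and its value at $t=1$ does not determine that class. So your Step~2 does not specify an isomorphism $\ind D^{\scrU_\ell}_u\cong\ind D^\calA_x$ even up to sign, let alone one varying continuously in $u$. The sentence in the proof of Lemma~\ref{lem:symmetrybreaking} saying a homotopy is equivalent to a choice at one point needs the same caveat.

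The repair uses an observation you already make, $u(0,\cdot)^*TQ=u(0,\cdot)^*\Lambda_\std$, together with the fact that $\Lambda_\std$ is a globally defined Lagrangian, hence totally real, subbundle of $T(T^*Q)$. Deform the whole pair rather than only the bundle. With $u_\lambda(s,t)=u(s+\lambda,t)$ and $\lambda\in[0,\infty]$, the family $\big(u_\lambda^*T(T^*Q),\,u_\lambda(0,\cdot)^*\Lambda_\std\big)$ is continuous, by exponential convergence to $x$. It ends exactly at $\big(\widetilde{x}^*T(T^*Q),\,x^*\Lambda_\std\big)$.

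Concretely, identify the pairs along $\lambda$ by transporting along every line $s\mapsto(s,t)$, not only $t=1$. Use a connection preserving $\Lambda_\std$, for instance the complexified pulled-back Levi-Civita connection under $T(T^*Q)\cong\Lambda_\std\otimes_\bbR\bbC$. Then $F=x^*\Lambda_\std$ on the nose, so Step~2 and its ambiguity disappear. Contractibility of the space of Cauchy--Riemann operators on a fixed pair with fixed asymptotics then gives the isomorphism canonically and continuously in $u$. No choice at a single value of $t$ is needed for this lemma.
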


\begin{cor}
We have the following canonical isomorphism of virtual bundles:
    \begin{equation}
    T\scrU_\ell(x,a)+\underline{\bbR}^{I(a)}\cong\underline{\ind D}^\calA_x.
    \end{equation}
Moreover, we have that this twisted stable framing extends over the Gromov-compactification and, when restricted to a codimension 1 boundary stratum, agrees with the already constructed twisted stable framing on that codimension 1 boundary stratum.
\end{cor}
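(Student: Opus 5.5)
The plan is to mirror the proof of Corollary \ref{cor:extension}, with Lemma \ref{lem:symmetrybreaking2} playing the role of the MU-level lemma. There is no twisting local system on the Morse side here, since the target is $\bbM^{\bbS,r}$ with its standard framings \eqref{eqn:morseframing}.

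\textbf{Step 1: the identification on the interior.} By transversality of $\eval_r$ to $W^s(a;f^r)$, we have the short exact sequence
\begin{equation}
0\to T\scrU_\ell(x,a)\to T\scrU_\ell(x)\vert_{\scrU_\ell(x,a)}\to \eval_r^*\big(T\calL^rQ/TW^s(a;f^r)\big)\to0 .
\end{equation}
The normal bundle of $W^s(a;f^r)$ is identified, by flowing to $a$, with $TW^u(a;f^r)\vert_a\cong\underline{\bbR}^{I(a)}$. This uses the same trivialization that enters the Morse framing \eqref{eqn:morseframing}.

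Splitting the sequence canonically, for example via the metric, gives
\begin{equation}
T\scrU_\ell(x,a)+\underline{\bbR}^{I(a)}\cong T\scrU_\ell(x).
\end{equation}
Composing with Lemma \ref{lem:symmetrybreaking2} then yields the desired isomorphism on $\scrU_\ell(x,a)$.

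\textbf{Step 2: extension over the Gromov compactification.} Lemma \ref{lem:symmetrybreaking2} is obtained by gluing $D^\calA_x$ to $D^{\scrU_\ell}_u$ and applying the parallel-transport identification along $s\mapsto(s,1)$, as in Lemma \ref{lem:symmetrybreaking}. Both steps are canonical, so they extend over the compactification, using the analogue of Proposition \ref{prop:indexfloer} for the hybrid moduli spaces.

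\textbf{Step 3: Floer breaking.} At a stratum $\bbF^\ell(x,x')\times\bbU_{\ell,r}(x',a)$, take $(u_1,u_2)$ in the interior. The triple gluing $D^\calA_x\#D(\overline{\partial}_{H^\ell,J^\ell})_{u_1}\#D^{\scrU_\ell}_{u_2}$ can be canonically deformed to $D^\calA_x\#D^{\scrU_\ell}_{u_1\#u_2}$. We must check that composing the Floer framing of Lemma \ref{lem:symmetrybreaking}, which gives $\ind(D^\calA_x\#D_{u_1})\cong\ind D^\calA_{x'}$, with Lemma \ref{lem:symmetrybreaking2} for $u_2$ agrees with Lemma \ref{lem:symmetrybreaking2} for the glued curve.

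This is the step I expect to be the main obstacle. The choices of homotopy of totally real subbundles are made by parallel transport along the fixed line $s\mapsto(s,1)$. One must verify that the line for the glued cylinder is the concatenation of the lines for $u_1$ and $u_2$, and that parallel transport composes. It does, since both are taken along the same $t=1$ slice. Because of this, the induced isomorphisms $T_{x(0)}Q\cong T_{x'(0)}Q\cong T_{\eval(u)(0)}Q$ compose correctly.

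\textbf{Step 4: Morse breaking.} At a stratum $\bbU_{\ell,r}(x,a')\times\bbM^r(a',a)$, the Floer-side data is unchanged. The compatibility reduces to the standard fact that
\begin{equation}
\underline{\bbR}^{I(a')}\cong T\bbM^r(a',a)+\underline{\bbR}+\underline{\bbR}^{I(a)}
\end{equation}
is exactly the Morse framing \eqref{eqn:morseframing}, matching the splitting of normal bundles of stable manifolds. The $\underline{\bbR}$ is the collar direction.

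Higher-codimension strata follow from the associativity of these gluing identifications, exactly as in Corollary \ref{cor:extension}.
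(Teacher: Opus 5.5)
Your proposal is correct and follows the paper's proof. It splits $0\to T\scrU_\ell(x,a)\to T\scrU_\ell(x)\to TW^u(a;f^r)\to 0$, composes with Lemma \ref{lem:symmetrybreaking2}, and obtains the extension and boundary compatibility by the gluing argument of Corollary \ref{cor:extension}, which your Steps 2--4 simply spell out in more detail than the paper does.

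There is one labeling slip in Step 3. A cylinder $u_1\in\bbF^\ell(x,x')$ runs from $x'$ to $x$, so Lemma \ref{lem:symmetrybreaking} gives $\ind(D^\calA_{x'}\#D_{u_1})\cong\ind D^\calA_x$. Also, $D^\calA_x$ is never literally glued to $D^{\scrU_\ell}_u$, since both have positive ends at $x$. The correct check is that the composite $\ind D^{\scrU_\ell}_u\cong\ind D^{\scrU_\ell}_{u_2}+\ind D_{u_1}\cong\ind D^\calA_{x'}+\ind D_{u_1}\cong\ind D^\calA_x$ agrees with Lemma \ref{lem:symmetrybreaking2} for the glued curve $u$. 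Your argument that parallel transports along $t=1$ compose establishes exactly this.
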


\begin{proof}
By splitting the short exact sequence 
    \begin{equation}
    0\to T\scrU_\ell(x,a)\to T\scrU_\ell(x)\to TW^u(a;f^r)\to0, 
    \end{equation}
we obtain an identification 
    \begin{equation}
    T\scrU_\ell(x,a)+\underline{\bbR}^{I(a)}\cong T\scrU_\ell(x)\cong\underline{\ind D}^\calA_x.
    \end{equation}
    
The claims about this twisted stable framing (1) extending over the Gromov-compactification and (2) having the correct restriction on a codimension 1 boundary stratum both follow analogously to the proof of Corollary \ref{cor:extension}, whence the corollary.
\end{proof}

In particular, we have constructed a framed flow bimodule
    \begin{equation}
    \bbU^{ \bbS}_{\ell,r}:\bbF^{\bbS,\ell,\Lambda_\std}\to\bbM^{\bbS,r}.
    \end{equation}
It remains to show compatibility with the continuation resp. inclusion framed flow bimodules.

\subsubsection{Directed systems: framed}
We consider the inclusions. 

\begin{prop}
The framed flow bimodules $\bbI^\bbS_{r,r+1}\circ\bbU^\bbS_{\ell,r}$ and $\bbU^\bbS_{\ell,r+1}$ are homotopic as framed flow bimodules.
\end{prop}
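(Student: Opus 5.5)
We follow the pattern of the complex-oriented analogue: we upgrade the unstructured flow 2-simplex $\bbU^\bbI:\bbI_{r,r+1}\circ\bbU_{\ell,r}\Rightarrow\bbU_{\ell,r+1}$ from Proposition \ref{prop:unstructuredinclusion} to a framed flow 2-simplex. That is, we put twisted stable framings on the moduli spaces $\bbU^\bbI(x,a)$ that restrict, on every codimension 1 boundary stratum, to the framings already constructed. On the face $s=0$ this means the framing of $\bbU^\bbS_{\ell,r+1}$. On the face $s=+\infty$ it means the product of the framings on $\bbU^\bbS_{\ell,r}(x,a')$ and the standard framing \eqref{eqn:aux2*} on $\bbI_{r,r+1}(a',a)$. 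On the breaking faces it means the Floer resp. Morse framings.

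The first step is a canonical framing of the unconstrained space $\scrU^\bbI(x)=\bbR\times\scrU_\ell(x)$. Since $T\scrU^\bbI(x)\cong\underline{\bbR}\oplus T\scrU_\ell(x)$, Lemma \ref{lem:symmetrybreaking2} gives
\begin{equation}
T\scrU^\bbI(x)\cong\underline{\bbR}+\underline{\ind D}^\calA_x .
\end{equation}
Here no local system on $\calL Q$ is involved, because the target is the untwisted $\bbM^{\bbS,r+1}$. The second step splits the short exact sequence
\begin{equation}
0\to T\scrU^\bbI(x,a)\to T\scrU^\bbI(x)\to TW^u(a;f^{r+1})\to0,
\end{equation}
using the transversality of $\eval_{\bbR,r}$ to $W^s(a;f^{r+1})$ and $TW^u(a;f^{r+1})\cong\underline{\bbR}^{I(a)}$. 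This yields
\begin{equation}
T\scrU^\bbI(x,a)+\underline{\bbR}^{I(a)}\cong\underline{\bbR}+\underline{\ind D}^\calA_x ,
\end{equation}
which is exactly the shape required of a framed flow 2-simplex, with one extra $\underline{\bbR}$ for the interior vertex label.

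The third step checks compatibility with the boundary, copying the argument of Corollary \ref{cor:extension}. Extension over the Gromov compactification comes from the analogue of Proposition \ref{prop:indexfloer}. Floer breakings are handled by deforming the glued operator $D^\calA_x\#D(\overline{\partial}_{H^\ell,J^\ell})_{u_1}$ to $D^\calA_{x'}$ via Lemma \ref{lem:symmetrybreaking}. The $s=0$ face is immediate, since there the $\underline{\bbR}$ factor is the collar direction. Morse breakings in $\calL^{r+1}Q$ follow from the cancellation of the $TW^u(a')$ terms.

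The main obstacle is the $s=+\infty$ face. There we must show that the framing coming from $\underline{\bbR}\oplus T\scrU_\ell(x)$, restricted to a broken configuration, agrees with the composite of two framings: the one from splitting $T\scrU_\ell(x)$ against $TW^u(a';f^r)$, and the one on $\bbI_{r,r+1}(a',a)$ from the sequence $0\to T\bbI\to TW^u(a';f^r)\to TW^u(a;f^{r+1})\to0$. I would reduce this to linear algebra on the chain
\begin{equation}
T\scrU_\ell(x)\to TW^u(a';f^r)\to TW^u(a;f^{r+1}).
\end{equation}
Under the flow $\phi^r_s$ with $s\to\infty$, the differential of $i_{r,r+1}\circ\phi^r_s\circ\eval_r$ degenerates to this composite, and the $\underline{\bbR}$ direction matches the collar gluing parameter. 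The two ways of splitting the composite differ by a contractible choice, namely complements in a filtered vector space. So the framings agree up to a canonical homotopy, and this homotopy can be chosen coherently over the corners. The sign and ordering conventions for the $\underline{\bbR}$ factors are where I would be most careful.
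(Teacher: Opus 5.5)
Your proposal is correct and is essentially the paper's proof. Like the paper, you frame $T\scrU^\bbI(x)\cong\underline{\bbR}+\underline{\ind D}^\calA_x$ via $T\scrU^\bbI(x)\cong\underline{\bbR}+T\scrU_\ell(x)$ and Lemma~\ref{lem:symmetrybreaking2}, split $0\to T\scrU^\bbI(x,a)\to T\scrU^\bbI(x)\to TW^u(a;f^{r+1})\to0$, and check the boundary strata as in Corollary~\ref{cor:extension}. Your linear-algebra treatment of the $s=+\infty$ face only makes explicit what the paper leaves to that analogy. There is one small index slip at the Floer breakings $\bbF^\ell(x,x')\times\bbU_{\ell,r}(x',a)$: the identification should read $\ind\big(D^\calA_{x'}\#D(\overline{\partial}_{H^\ell,J^\ell})_{u_1}\big)\cong\ind D^\calA_x$, not the reverse.
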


\begin{proof}
We need to provide twisted stable framings on the moduli spaces $\bbU^\bbI(x,a)$ appearing in the proof of Proposition \ref{prop:unstructuredinclusion} which, when restricted to codimension 1 boundary strata, agree with the already constructed twisted stable framings.

Lemma \ref{lem:symmetrybreaking2}, combined with the fact that
    \begin{equation}
    T\scrU^\bbI(x)\cong\underline{\bbR}+T\scrU_\ell(x),
    \end{equation}
yields the following canonical isomorphism of virtual bundles: 
    \begin{equation}
    T\scrU^\bbI(x)\cong\underline{\bbR}+\underline{\ind D}^\calA_x.
    \end{equation}

By splitting the short exact sequence 
    \begin{equation}
    0\to T\scrU^\bbI(x,a)\to T\scrU^\bbI(x)\to TW^u(a;f^{r+1})\to0, 
    \end{equation}
we obtain an identification 
    \begin{equation}
    T\scrU^\bbI(x,a)+\underline{\bbR}^{I(a)}\cong T\scrU^\bbI(x)\cong\underline{\bbR}+\underline{\ind D}^\calA_x.
    \end{equation}

That this twisted stable framing (1) extends over the Gromov-compactification and (2) has the correct restriction on a codimension 1 boundary stratum both follow analogously to the proof of Corollary \ref{cor:extension}; the proposition follows.
\end{proof}

We end with the continuations. 

\begin{prop}
The framed flow bimodules $\bbU^\bbS_{\ell+1,r}\circ\frakc^\bbS_{\ell,\ell+1}$ and $\bbU^\bbS_{\ell,r}$ are homotopic as framed flow bimodules.
\end{prop}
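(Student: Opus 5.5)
The plan mirrors the framed inclusion proposition and its complex-oriented continuation analogue. By Proposition \ref{prop:unstructuredcontinuation} we already have the unstructured flow 2-simplex
\begin{equation}
\bbU^\frakc:\bbU_{\ell+1,r}\circ\frakc_{\ell,\ell+1}\Rightarrow\bbU_{\ell,r},
\end{equation}
whose moduli spaces are the $\bbU^\frakc(x,a)$. It therefore suffices to put twisted stable framings on the $\bbU^\frakc(x,a)$ that restrict on each codimension 1 boundary stratum to the framings already built. These strata are:
\begin{enumerate}
\item Floer breakings, framed via $\bbF^{\bbS,\ell,\Lambda_\std}$;
\item Morse breakings, framed via $\bbM^{\bbS,r}$;
\item the $\mu=-\infty$ end $\bbU_{\ell,r}(x,a)$, framed by the corollary following Lemma \ref{lem:symmetrybreaking2};
\item the $\mu=+\infty$ end, the quotient of products $\frakc_{\ell,\ell+1}(x,x')\times\bbU^\frakc(x',a)$, carrying the continuation framing $T\frakc_{\ell,\ell+1}(x,y)+\underline{\ind D}^\calA_y\cong\underline{\ind D}^\calA_x$ together with the framing on the other factor.
\end{enumerate}

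First, I would prove an analogue of Lemma \ref{lem:symmetrybreaking2} for the parametrized family, namely
\begin{equation}
T\scrU^\frakc(x)\cong\underline{\bbR}+\underline{\ind D}^\calA_x.
\end{equation}
The argument combines $T\scrU^\frakc(x)\cong T\scrU_\ell(x)+\underline{\bbR}$, where the $\underline{\bbR}$ is the $\mu$-direction, with the canonical identification of $\ind D^{\scrU^\frakc}_{(\mu,u)}$ with $\ind D^\calA_x$. That identification is obtained as in Lemma \ref{lem:symmetrybreaking}: glue the cap $D^\calA_x$ at $+\infty$, observe that the resulting disk-type operator has totally real boundary condition $Q$ at $s=0$, and homotope this boundary condition to $x^*\Lambda_\std$ using parallel transport along the fixed line $s\mapsto(s,1)$. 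Because the identification is canonical, it varies continuously in $\mu$.

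Second, splitting $0\to T\scrU^\frakc(x,a)\to T\scrU^\frakc(x)\to TW^u(a;f^r)\to0$ yields
\begin{equation}
T\scrU^\frakc(x,a)+\underline{\bbR}^{I(a)}\cong\underline{\bbR}+\underline{\ind D}^\calA_x,
\end{equation}
which is the form required of a framed 2-simplex. I would then extend this framing over the Gromov compactification using the analogue of Proposition \ref{prop:indexfloer}, obtained via \cite[Section 7]{Lar21}.

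The main obstacle is the $\mu=+\infty$ boundary check. There the operator for $(\mu,u)$ degenerates into a continuation operator glued to a $\scrU_{\ell+1}$-type operator, and the framing must be compared with the composite framing. That composite combines the continuation framing, which runs from $\ind D^\calA_x$ through $T\frakc$ to $\ind D^\calA_{x'}$, with the framing $T\scrU^\frakc(x',a)\cong\cdots\cong\underline{\ind D}^\calA_{x'}$. I would handle this as in Corollary \ref{cor:extension}, by noting that $D^\calA_x\#D^{\frakc}_{u_1}\#D^{\scrU}_{u_2}$ deforms canonically to $D^\calA_x\#D^{\scrU^\frakc}_{u}$. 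The one genuinely new point is that both identifications used the same parallel transport along $s\mapsto(s,1)$, and parallel transport is compatible with concatenating paths. That compatibility makes the two trivializations of the totally real boundary condition agree up to a canonical homotopy.

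The $\mu=-\infty$ end is immediate, since the framing there is literally the one from the corollary to Lemma \ref{lem:symmetrybreaking2}. The Floer and Morse breakings are checked exactly as in Corollary \ref{cor:extension}, where the $TW^u(a')$ terms cancel.
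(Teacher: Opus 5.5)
Your proposal is correct and follows the paper's proof. The paper likewise combines Lemma~\ref{lem:symmetrybreaking2} with $T\scrU^\frakc(x)\cong\underline{\bbR}+T\scrU_\ell(x)$ to get $T\scrU^\frakc(x)\cong\underline{\bbR}+\underline{\ind D}^\calA_x$, splits off $TW^u(a;f^r)$, and defers the extension over the compactification and the boundary compatibility to the argument of Corollary~\ref{cor:extension}; your stratum-by-stratum check, including the concatenation property of parallel transport along $s\mapsto(s,1)$ at $\mu=+\infty$, simply spells that out.

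There are two harmless slips. First, $D^\calA_x$ cannot literally be glued onto the fiberwise operator at $+\infty$, since both have positive ends at $x$. As in Lemma~\ref{lem:symmetrybreaking}, the fiberwise operator is instead identified directly with an operator shaped like $D^\calA_x$, whose boundary condition is transported from $TQ$ and then homotoped to $x^*\Lambda_\std$. Second, the second factor of the $\mu=+\infty$ stratum should be $\bbU_{\ell+1,r}(x',a)$, framed by $T\scrU_{\ell+1}(x',a)+\underline{\bbR}^{I(a)}\cong\underline{\ind D}^\calA_{x'}$, rather than $\bbU^\frakc(x',a)$; the paper's display of that stratum has the same typo.
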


\begin{proof}
We need to provide twisted stable framings on the moduli spaces $\bbU^\frakc(x,a)$ appearing in the proof of Proposition \ref{prop:unstructuredcontinuation} which, when restricted to codimension 1 boundary strata, agree with the already constructed twisted stable framings.

Lemma \ref{lem:symmetrybreaking2}, combined with the fact that
    \begin{equation}
    T\scrU^\frakc(x)\cong\underline{\bbR}+T\scrU^\frakc(x),
    \end{equation}
yields the following canonical isomorphism of virtual bundles: 
    \begin{equation}
    T\scrU^\frakc(x)\cong\underline{\bbR}+\underline{\ind D}^\calA_x.
    \end{equation}

By splitting the short exact sequence 
    \begin{equation}
    0\to T\scrU^\frakc(x,a)\to T\scrU^\frakc(x)\to TW^u(a;f^r)\to0, 
    \end{equation}
we obtain an identification 
    \begin{equation}
    T\scrU^\frakc(x,a)+\underline{\bbR}^{I(a)}\cong T\scrU^\frakc(x)\cong\underline{\bbR}+\underline{\ind D}^\calA_x.
    \end{equation}

That this twisted stable framing (1) extends over the Gromov-compactification and (2) has the correct restriction on a codimension 1 boundary stratum both follow analogously to the proof of Corollary \ref{cor:extension}; the proposition follows.
\end{proof}

We have now all of the ingredients necessary to prove Theorem \ref{thm:mainS}. 

\begin{proof}[Proof of Theorem \ref{thm:mainS}]
We have constructed a map 
    \begin{equation}
    \bbU^{ \bbS}_{\ell,r}:\frakF^{\bbS,\ell,\Lambda_\std}\to\frakM^{\bbS,r}\simeq\Sigma^\infty_+\calL^rQ
    \end{equation}
which is compatible with the inclusions and continuations; hence, we induce a map 
    \begin{equation}
    \bbU^{ \bbS}:\frakF^{\bbS,\Lambda_\std}\to\Sigma^\infty_+\calL Q
    \end{equation}
which, on integral homology, is precisely the Viterbo isomorphism of \cite[Theorem 5.1.1]{Abo14}. Since every spectrum in sight is bounded below, a spectral Whitehead theorem for $H\bbZ$-module spectra shows the previous map is in fact a homotopy equivalence, as desired.
\end{proof}

\bibliography{References}{}
\bibliographystyle{alpha.bst}
\end{document}